\titleformat{\section}{\normalfont\large\bfseries}{\thesection}{1em}{}
\titleformat{\subsection}{\normalfont\bfseries}{\thesubsection}{1em}{}
\titleformat{\subsubsection}{\normalfont\itshape}{\thesubsubsection}{1em}{}
\definecolor{darkgreen}{rgb}{0,0.5,0}
\renewcommand{\theequation}{\thesection.\arabic{equation}}
\newtheorem{theorem}{Theorem}[section]
\newtheorem{remark}[theorem]{Remark}
\newtheorem{corollary}[theorem]{Corollary}
\newtheorem{definition}[theorem]{Definition}
\newtheorem{proposition}[theorem]{Proposition}
\newtheorem{lemma}[theorem]{Lemma}
\numberwithin{equation}{section}
\let\non\nonumber
\def\Lip{Lip\-schitz}
\def\Holder{H\"older}
\def\lhs{left-hand side}
\def\rhs{right-hand side}
\def\multibold #1{\def\arg{#1}%
  \ifx\arg\pto \let\next\relax
  \else
  \def\next{\expandafter
    \def\csname #1#1#1\endcsname{{\bf #1}}%
    \multibold}%
  \fi \next}
\def\pto{.}
\def\multical #1{\def\arg{#1}%
  \ifx\arg\pto \let\next\relax
  \else
  \def\next{\expandafter
    \def\csname cal#1\endcsname{{\cal #1}}%
    \multical}%
  \fi \next}
\def\multimathop #1 {\def\arg{#1}%
  \ifx\arg\pto \let\next\relax
  \else
  \def\next{\expandafter
    \def\csname #1\endcsname{\mathop{\rm #1}\nolimits}%
    \multimathop}%
  \fi \next}
\def\R{{\mathbb{R}}}
\def\N{{\mathbb{N}}}
\def\<#1>{\mathopen\langle #1\mathclose\rangle}
\def\norma #1{\left\| #1\right\|}
\newcommand{\vertiii}[1]{{\left\vert\kern-0.25ex\left\vert\kern-0.25ex\left\vert #1 \right\vert\kern-0.25ex\right\vert\kern-0.25ex\right\vert}}
\def\I2 #1{\int_{Q_t}|{#1}|^2}
\def\IT2 #1{\int_{Q_t^T}|{#1}|^2}
\def\IO2 #1{\norma{{#1(t)}}^2}
\def\IOT2 #1{\norma{{#1(T)}}^2}
\def\ov #1{{\overline{#1}}}
\def\ioT {\int_0^T}
\def\iO{\int_\Omega}
\def\iG{\int_\Gamma}
\def\delt{\partial_t}
\def\deln{\partial_{\bf n}}
\def\checkmmode #1{\relax\ifmmode\hbox{#1}\else{#1}\fi}
\def\aeO{\checkmmode{a.e.\ in~$\Omega$}}
\def\aeQ{\checkmmode{a.e.\ in~$Q$}}
\def\aeS{\checkmmode{a.e.\ on~$\Sigma$}}
\def\aeG{\checkmmode{a.e.\ on~$\Gamma$}}
\def\genspazio #1#2#3#4#5{#1^{#2}(#5,#4;#3)}
\def\spazio #1#2#3{\genspazio {#1}{#2}{#3}T0}
\def\spaziok #1#2#3{\genspazio {#1}{#2}{#3}{T_k}0}
\def\L {\spazio L}
\def\H {\spazio H}
\def\C #1#2{C^{#1}([0,T];#2)}
\def\Lk {\spaziok L}
\def\Lx #1{L^{#1}(\Omega)}
\def\LGx #1{L^{#1}(\Gamma)}
\def\Hx #1{H^{#1}(\Omega)}
\let\eps\varepsilon
\def\b{\beta}	
\def\d{\delta}
\def\th{\theta}
\def\ph{\varphi}
\def\ps{\psi}
\def\cd{C_{\d}}
\def\Kn{{K_n}}
\newcommand{\D}{\mathbf{D}}
\def\bv{{\boldsymbol{v}}}
\def\VV{{\bf{V}}}
\def\bv{{\boldsymbol{v}}}
\def\HH{{\bf{H}}}
\def\VV{{\bf{V}}}
\def\LL{{\bf{L}}}
\def\Vp{{V^*}}
\def\HG{{H}_{\Gamma}}
\def\VG{{V}_{\Gamma}}
\def\HHG{\mathbf {H}_{\Gamma}}
\def\BH{{\mathbb{H}}}
\def\Vp{{V^*}}
\newcommand{\CH}{{\cal H}}
\newcommand{\CV}{{\cal V}}
\newcommand{\Hs}{{\bf H}_{\sigma, \nnn}}
\newcommand{\Vs}{{\bf V}_{\sigma, \nnn}}
\newcommand{\ds}{{\rm ds}}
\def\w{{\boldsymbol w}}
\let\phi\varphi
\let\eps\varepsilon
\def\etaG{{\eta_\Gamma}}
\def\zetaG{{\zeta_\Gamma}}
\newcommand\emb{{\hookrightarrow}}
\renewcommand{\hat}{\widehat}
\def\mz{m_0}
\def\mgz{m_{\Gamma0}}
\def\Accorpa #1#2 #3 {\gdef #1{\eqref{#2}--\eqref{#3}}%
\wlog{}\wlog{\string #1 -> #2 - #3}\wlog{}}
\def\dd#1 {{\rm d#1}}
\def\step #1 \par{\medskip\noindent{\it #1.}\quad}
\newcommand{\abs}[1]{\left| #1 \right|}
\newcommand{\bigabs}[1]{\big| #1 \big|}
\newcommand{\norm}[1]{\| #1 \|}
\newcommand{\bigang}[2]{ \big< #1 , #2  \big>}
\newcommand{\scp}[2]{ \left( #1 , #2  \right)}
\newcommand{\meano}[1]{{\langle #1 \rangle}_{\Omega}}
\newcommand{\meang}[1]{{\langle #1 \rangle}_{\Gamma}}
\def\betaG{{\beta_\Gamma}}
\def\beG{{\beta_{\Gamma,\eps}}}
\def\beps{{\beta_\eps}}
\begin{document}


\title{\bfseries\Large
    Two-phase flows through porous media\\
    described by a Cahn--Hilliard--Brinkman model \\
    with dynamic boundary conditions 
}

\author{\normalsize 
    Pierluigi Colli \footnotemark[1] 
    \and\normalsize  
    Patrik Knopf \footnotemark[2] 
    \and\normalsize  
    Giulio Schimperna \footnotemark[1]  
    \and\normalsize  
    Andrea Signori \footnotemark[3] 
}

\renewcommand{\thefootnote}{\fnsymbol{footnote}}

\footnotetext[1]{
Dipartimento di Matematica ``F. Casorati'', Universit\`a di Pavia, and Research Associate at the IMATI~--~C.N.R. Pavia, 27100 Pavia, Italy 
({\tt \href{mailto:pierluigi.colli@unipv.it}{pierluigi.colli@unipv.it}, 
\href{mailto:giusch04@unipv.it}{giusch04@unipv.it}}).}

\footnotetext[2]{Department of Mathematics, University of Regensburg, 93040 Germany 
({\tt\href{mailto:patrik.knopf@ur.de}{patrik.knopf@ur.de}}).}

\footnotetext[3]{Dipartimento di Ma\-te\-ma\-ti\-ca, Politecnico di Milano, 20133 Milano, Italy \\ ({\tt \href{mailto:andrea.signori@polimi.it}{andrea.signori@polimi.it}}), Alexander von Humboldt Research Fellow.}

\date{}

\maketitle

\renewcommand*{\thefootnote}{\arabic{footnote}}

\begin{center}
	\scriptsize
	{
		\textit{This is a preprint version of the paper. Please cite as:} \\  
		P.~Colli, P.~Knopf, G.~Schimperna, A.~Signori 
        \textit{J. Evol. Equ.} \textbf{24}:85, 55 pp. (2024), \\
		\url{https://doi.org/10.1007/s00028-024-00999-y}
	}
\end{center}

\vspace{2ex}

%
%

\begin{small}
\begin{center}
    \textbf{Abstract}
\end{center}
We investigate a new diffuse-interface model that describes creeping two-phase flows (i.e., flows exhibiting a low Reynolds number), especially flows that permeate a porous medium. The system of equations consists of a Brinkman equation for the volume averaged velocity field and a convective Cahn--Hilliard equation with dynamic boundary conditions for the phase-field, which describes the location of the two fluids within the domain. The dynamic boundary conditions are incorporated to model the interaction of the fluids with the wall of the container more precisely. In particular, they allow for a dynamic evolution of the contact angle between the interface separating the fluids and the boundary, and for a convection-induced motion of the corresponding contact line. For our model, we first prove the existence of global-in-time weak solutions in the case where regular potentials are used in the Cahn--Hilliard subsystem. In this case, we can further show the uniqueness of the weak solution under suitable additional assumptions. We further prove the existence of weak solutions in the case of singular potentials. Therefore, we regularize such singular potentials by a Moreau--Yosida approximation, such that the results for regular potentials can be applied, and eventually pass to the limit in this approximation scheme.
\\[1ex]

\textbf{Keywords:} two-phase flows, porous media, Cahn--Hilliard equation, Brinkman equation, dynamic boundary conditions, bulk-surface interaction.
\\[1ex]	
\textbf{Mathematics Subject Classification:}
		35D30, 
	    35K35, 
	    35K86, 
		35B65, 
		76D03, 
		76T06  
\end{small}

\vspace{2.5ex}

\begin{small}
	\setcounter{tocdepth}{2}
	\setlength\parskip{0ex}
	\hypersetup{linkcolor=black}
	\tableofcontents
\end{small}


\setlength\parskip{1ex}
\allowdisplaybreaks

\section{Introduction}
\label{SEC:INTRO}
\setcounter{equation}{0}

The mathematical study of two-phase flows is an important topic in many areas of applied science such as engineering, chemistry and biology. To predict the motion of two fluids, it is crucial to understand how the interface between these fluids evolves.
To provide a mathematical description of this interface, two fundamental methods have been developed: the \textit{sharp-interface approach} and the \textit{diffuse-interface approach}. 
In the former, the interface is represented as a hypersurface evolving in the surrounding domain. The occurring quantities (e.g., the velocity fields) are then described by a free boundary problem.
In the latter, the fluids are represented by a \textit{phase-field} function which is expected to attain values close to $1$ in the region where the first fluid is present, and close to $-1$ in the region where the second fluid is located. However, unlike in sharp-interface models, this phase-field does not jump between the values $1$ and $-1$ but exhibits a continuous transition between these values in a thin tubular neighborhood around the boundary between the fluids. This tubular neighborhood is referred to as the \textit{diffuse interface} and its thickness is proportional to a small parameter $\epsilon>0$.
For a comparison of sharp-interface methods and diffuse-interface methods, we refer to \cite{du2020phase, AbelsGarckeReview, giga2017variational,pruss2016moving}.
We point out that, even though the sharp-interface and the diffuse-interface approach are conceptually different, they can, in general, be related by the \textit{sharp-interface limit} in which a parameter related to the thickness of the diffuse interface is sent to zero.

In the context of diffuse-interface models, such models in which the phase-field is described by a Cahn--Hilliard type equation have become particularly popular. One of the most widely used models for describing the motion of two viscous, incompressible fluids with matched (constant) densities is the \textit{Model H}.
It was first proposed in \cite{HH} and was later rigorously derived in \cite{GurtinPolignoneVinals}. The PDE system consists of an incompressible Navier--Stokes equation coupled with a convective Cahn--Hilliard equation.
In terms of mathematical analysis, the Model H was investigated quite extensively, see, e.g., in \cite{Abels2009, boyer1999, GalGrasselli2010, GMT2019}.
Further generalizations of this model can be found in \cite{lowengrub1998quasi, boyer2002theoretical, ding2007diffuse, shen2013mass,shokrpour2018diffuse,giga2017variational,heida2012development,freistuhler2017phase}.

One drawback of the Model H is that it can merely be used to describe the situation in which the fluids have the same individual density. To overcome this issue, a thermodynamically consistent diffuse-interface model for incompressible two-phase flows with possibly \textit{unmatched} densities was derived in the seminal work \cite{AGG}. This model is usually referred to as the \textit{AGG model}.
Concerning mathematical analysis of this model, we refer the reader to
\cite{abels2013existence,abels2013incompressible,AbelsWeber2021, giorgini2021well, giorgini2022-3D,Abels2023}.
The connection between the AGG model and the two-phase Navier--Stokes free boundary problem is explained in \cite{AGG,AbelsGarckeReview}.

Even though the AGG model and the Model H subject to the classical boundary conditions (i.e., a \textit{no-slip} boundary condition for the velocity field and homogeneous Neumann boundary conditions for the convective Cahn--Hilliard equation) are well suited to describe the motion of the fluids in the the interior of the considered domain, they still inherit some limitations from the underlying (convective) Cahn--Hilliard system with homogeneous Neumann boundary conditions. 
The main limitations are:
\begin{enumerate}[label={{(L\arabic*)}}, ref={{(L\arabic*)}}]
    \item\label{L1} The homogeneous Neumann condition on the phase-field enforces that the diffuse interface always intersects the boundary at a perfect ninety degree contact angle. This will not be fulfilled in many applications. In general, the contact angle might even change dynamically over the course of time.
    \item\label{L2} The no-slip boundary condition on the velocity field makes the model not very suitable to describe general moving contact line phenomena. As the trace of the velocity field at the boundary is fixed to be identically zero, any motion of the contact line of the diffuse interface can be caused only by diffusive but not directly by convective effects.
    \item\label{L3} The mass of the fluids in the bulk is conserved. Therefore, a transfer of material between the bulk and the boundary (caused, e.g., by absorption processes or chemical reactions) cannot be described. 
\end{enumerate}
A more detailed discussion can be found in \cite{Giorgini-Knopf}.

To overcome the aforementioned restrictions (L1) and (L2), a class of dynamic boundary conditions was derived in \cite{Qian-Wang-Sheng}. It involves an Allen--Cahn type dynamic boundary condition for the phase-field coupled to a generalized Navier-slip boundary condition for the velocity field. The Model H subject to this boundary condition was analyzed in \cite{GGM2016} whereas the AGG model subject to this boundary condition was investigated in \cite{GGW2019}.

Recently, a thermodynamically consistent generalization of the AGG model subject to another class of dynamic boundary conditions was derived in \cite{Giorgini-Knopf}. Here, the boundary condition consists of a convective surface Cahn--Hilliard equation and a generalized Navier-slip boundary condition. Compared to the models studied in \cite{Qian-Wang-Sheng,GGW2019,GGM2016}, the Navier--Stokes--Cahn--Hilliard system introduced in \cite{Giorgini-Knopf} provides more regularity for the boundary quantities and therefore, the uniqueness of weak solutions can be established in two space dimensions. Moreover, due to the fourth-order dynamic boundary condition of Cahn--Hilliard type, the model in \cite{Giorgini-Knopf} is not only capable of overcoming the limitations (L1) and (L2) but also (L3). 

In the present paper, we particularly want to consider the situation of {\it creeping flows} meaning that the Reynolds number 
\begin{align*}
    \mathrm{Re} = \frac{u L}{\nu}
\end{align*}
associated with the fluids is very small ($\mathrm{Re} \ll 1$). This occurs if the flow speed $u$ and/or the characteristic length $L$ of the flow are small compared to the kinematic viscosity $\nu$. In this situation, it is not necessary to describe the time evolution of the velocity field by the full Navier--Stokes equation. Since advective inertial forces are small compared to viscous forces, the material derivative can be neglected. This leads to the \textit{Stokes equation}. If a creeping flow through a porous medium is to be considered, an additional term accounting for the permeability needs to be included. The velocity field is then usually determined by \textit{Darcy's law} or the \textit{Brinkman equation}. For a derivation of these velocity equations via homogenization techniques, we refer, for instance, to \cite{Allaire1991,Hoefer2023,Masmoudi2002,Giunti2019,Feireisl2016} and the references therein.

Therefore, in this paper, we study the following \textit{Cahn--Hilliard--Brinkman} system with dynamic boundary conditions:
\begin{subequations}
\label{CHB}
\begin{alignat}{2}
	\label{eq:1}
	& - \div (2 \nu(\phi) \D\bv)
	+ \lambda(\phi) \bv 	
    + \nabla  p
	= \mu \nabla \phi  
	\quad && \text{in $Q$,}
	\\
	\label{eq:2}
	& \div (\bv) =0 
	\quad && \text{in $Q$,}
	\\[1ex]
	\label{eq:3}
	& \delt  \phi
	+ \div (\phi \bv)
	- \div(M_\Omega(\phi) \nabla \mu )
	=0
	\quad && \text{in $Q$,}
	\\
	\label{eq:4}
	& \mu = -\epsilon \Delta \phi + \tfrac{1}{\epsilon} F'(\phi)	
	\quad && \text{in $Q$,}
	\\
	\label{eq:5}
	& \delt \psi 
	+ \div_\Gamma (\psi \bv)
	- \div_\Gamma (M_\Gamma(\psi) \nabla_\Gamma\theta )
	=0
	\quad && \text{on $\Sigma$,}
	\\
	\label{eq:6}
	& \theta = - \epsilon_\Gamma \Delta_\Gamma \psi + \tfrac{1}{\epsilon_\Gamma} G'(\psi) + \deln \phi	
	\quad && \text{on $\Sigma$,}
	\\[1ex]
	\label{eq:7}
	& K \deln \phi = \psi - \phi
	\quad && \text{on $\Sigma$,}
	\\
	\label{eq:8}
	& M_\Omega(\phi) \deln \mu = \bv \cdot \nnn = 0
	\quad && \text{on $\Sigma$,}
	\\
	\label{eq:9}
	& [2 \nu(\phi) \,\D\bv\, \nnn + \gamma(\psi) \bv]_\tau = - [\psi \nabla_\Gamma \theta]_\tau
	\quad && \text{on $\Sigma$,}
	\\[1ex]
	\label{eq:10}
	& \phi(0)= \phi_0
	\quad && \text{in $\Omega$,}
	\\
	\label{eq:11}
	& \psi(0)=\psi_0
	\quad && \text{on $\Gamma$.}
\end{alignat}
\end{subequations}
It can be regarded as a variant of the Navier--Stokes--Cahn--Hilliard model derived in \cite{Giorgini-Knopf}, where the incompressible Navier--Stokes equation is replaced by the incompressible \textit{Brinkman/Stokes equation} $\big($\eqref{eq:1},\eqref{eq:2}$\big)$ to describe the situation of a creeping two-phase flow. 
Such Cahn--Hilliard--Brinkman models for creeping two-phase flows through porous media have applications in petroleum engineering, especially concerning oil recovery from hydrocarbon reservoirs (see, e.g., \cite{Zhang2002,Feng2018}). They are also commonly used to describe two-phase mixtures in Hele--Shaw cells (see, e.g., \cite{Dede2018,Giorgini2018}). Furthermore, Cahn--Hilliard--Brinkman models are used in mathematical biology, especially in the context of tumor growth (see, e.g., \cite{Ebenbeck2019,Knopf2022} and the references therein).
For derivations of Cahn--Hilliard--Brinkman type models, we refer to \cite{Rohde,Schreyer,Chen2019} and the references cited therein.

In system \eqref{CHB}, $\Omega \subset \R^d$ with $d \in \{2,3\}$ is a bounded domain with boundary $\Gamma:=\partial \Omega$, $T>0$ is a given final time, and for brevity, the notation $Q:= \Omega \times (0,T)$ and $\Sigma= \Gamma \times (0,T)$ is used. 
The vector-valued function $\bv:Q\to\R^d$ stands for the volume averaged velocity field associated with the fluid mixture
and 
\begin{align*}
    \D\bv = \frac 12 \big(\nabla \bv + (\nabla \bv)^\top\big)    
\end{align*}
denotes the associated {\it symmetric gradient}. For the sake of simplicity, we will usually refrain from writing the trace operator. For instance, we will often simply write $\bv$ instead of $\bv|_\Gamma$. Nevertheless, in some instances, where confusion may arise, we will employ the explicit notation. 
For any vector field $\w$ on the boundary, we will write $\w_\tau:=\w -(\w \cdot \nnn)\nnn$ to denote its tangential component.
The symbols $\nabla_\Gamma$ and $\div_\Gamma$ denote the surface gradient and the surface divergence, respectively, and $\Delta_\Gamma$ stands for the Laplace--Beltrami operator. 

The functions $\phi:Q\to\R$ and $\mu:Q\to\R$ denote the phase-field and the chemical potential in the bulk, respectively, whereas $\psi:\Sigma\to\R$ and $\theta:\Sigma\to\R$ represent the phase-field and the chemical potential on the boundary, respectively. Furthermore, the parameters $\epsilon$ and $\epsilon_\Gamma$ are positive real numbers which are related to the thickness of the diffuse interface in the bulk and on the surface, respectively. Therefore, these constants are usually chosen to be quite small. However, as their values do not have any impact on the mathematical analysis, we will simply fix $\epsilon = \epsilon_\Gamma = 1$ in the subsequent sections. The phase-fields $\phi$ and $\psi$ are directly related by the coupling condition \eqref{eq:7}, where $K$ is a given nonnegative constant. 

From a physical point of view, the \textit{kinematic viscosity} $\nu(\phi)$ and the \textit{permeability coefficient} $\lambda(\phi)$ in the Brink\-man/Stokes equation \eqref{eq:1} can be expressed as
\begin{align*}
    \nu(\phi) = \frac{\eta(\phi)}{\varrho}
    \quad\text{and}\quad
    \lambda(\phi) = \frac{\eta(\phi)}{\varkappa},
\end{align*}
where $\eta(\phi)>0$ denotes the \textit{dynamic viscosity}, and the constants $\varrho>0$ and $\varkappa>0$ stand for the \textit{porosity} and the \textit{intrinsic permeability} of the porous medium, respectively. 
If both $\nu(\varphi)$ and $\lambda(\varphi)$ are positive, \eqref{eq:1} is the \textit{(quasi-stationary) Brinkman equation} which describes the flow through a porous medium.
It is equipped with the incompressibility condition \eqref{eq:2} and the \textit{inhomogeneous Navier-slip boundary condition} \eqref{eq:9} in which the \textit{slip parameter} $\gamma(\psi)\ge 0$ may depend on the surface phase-field $\psi$.
However, if the porosity $\varkappa$ is large compared to the viscosity $\eta(\phi)$, the function $\lambda(\varphi)$ is very small and can be neglected. 
In this case we enter the \textit{Stokes regime}, where no porous media is considered (or the effects of the porous medium are at least negligible). In the formal limit $\varkappa \to \infty$ or $\lambda(\varphi) \to 0$, \eqref{eq:1} degenerates to the \textit{Stokes equation}. In our analysis, we will be able to handle the Brinkman case ($\nu(\phi)>0$ and $\lambda(\phi)>0$) and the Stokes case ($\nu(\phi)>0$ and $\lambda(\phi)\equiv 0$) simultaneously. On the other hand, if $\lambda(\phi)$ remains positive and the porosity $\varrho$ is large compared to the dynamic viscosity $\eta(\phi)$ such that $\nu(\phi)$ can be neglected, \eqref{eq:1} degenerates to \textit{Darcy's law}. However, we are not able to handle this case in terms of mathematical analysis as due to the absence of spatial derivatives of the velocity field in \eqref{eq:1}, we would not obtain enough regularity to define the trace of $\bv$ on the boundary in a reasonable manner.

The functions $F'$ and $G'$ are the derivatives of double-well potentials $F$ and $G$, respectively.
Especially in applications related to materials science, a physically relevant choice for $F$ and/or $G$
is the \textit{logarithmic potential}, which is also referred to as the \textit{Flory--Huggins potential}. It is given by
\begin{align}
    \label{DEF:F:LOG}
    W_\mathrm{log}(s) = \frac{\Theta}{2} \big[(1+s)\,\ln(1+s) +(1-s)\,\ln(1-s)\big]+\frac{\Theta_c}{2}(1-s^2),
\end{align}
for all $s\in (-1,1)$. Here, ${\Theta>0}$ is the absolute temperature of the mixture, and $\Theta_c$ is a critical temperature such that phase separation will occur in case $0<\Theta<\Theta_c$. The logarithmic potential is classified as a singular potential since its derivative $F'$ diverges to $\pm\infty$ when its argument approaches $\pm 1$. It is often approximated by a \textit{polynomial double-well potential}
\begin{align}
    \label{DEF:F:REG}
    W_\mathrm{pol}(s) = \frac{\alpha}{4}(s^2-1)^2
    \quad\text{for all $s\in (-1,1)$},
\end{align}
where $\alpha>0$ is a suitable constant. 
Another very commonly used singular potential is the \textit{double-obstacle potential}, which is given by
\begin{align}
    \label{DEF:F:DOB}
    W_\mathrm{obst}(s)=
    \begin{cases}
    \frac{1}{2}(1-s^2)&\text{if $|s|\leq 1$,}\\
    +\infty&\text{else}.
    \end{cases}
\end{align}

In the case $K=0$, the convective bulk-surface Cahn--Hilliard subsystem \eqref{eq:3}--\eqref{eq:8} is a special case of the one introduced in \cite{Giorgini-Knopf} since for the chemical potential $\mu$, a homogeneous Neumann type boundary condition is imposed in \eqref{eq:8}. This corresponds to the choice $L=\infty$ in \cite{Giorgini-Knopf}. Therefore, by system \eqref{CHB}, we describe a situation where no transfer of material between bulk and boundary occurs. However, its is important that due to the boundary conditions \eqref{eq:5}--\eqref{eq:9}, the model \eqref{CHB} allows for dynamic changes of the contact angle as well as for a convection-induced motion of the contact line. This means that the limitations \ref{L1} and \ref{L2} explained above can be overcome. It is worth mentioning that this setup of dynamic boundary conditions for the Cahn--Hilliard equation (without coupling to a velocity equation) was originally derived in \cite{Liu2019} by the Energetic Variational Approach. This system was further investigated in \cite{Colli2020,Garcke2020,Miranville2020,Knopf2020}. For similar works on the Cahn--Hilliard equation with Cahn--Hilliard type dynamic boundary conditions, we refer to \cite{Knopf2021,Knopf2021a,Garcke2022,GMS,Colli2015,Colli2022,Colli2022a,Wu2022}.

In contrast to the model introduced in \cite{Giorgini-Knopf}, the phase-fields $\phi$ and $\psi$ are not just coupled by the trace relation $\phi\vert_\Sigma = \psi$ on $\Sigma$, but by the more general Robin type coupling condition $K\deln\phi = \psi-\phi$ with $K\ge 0$ (see \eqref{eq:7}). This also includes the trace relation via the choice $K=0$. The coupling condition \eqref{eq:7} was first used in \cite{Colli2019} for an Allen--Cahn type dynamic boundary condition, and later in \cite{Knopf2020} for a Cahn--Hilliard type dynamic boundary condition. In particular, it was rigorously shown in \cite{Knopf2020} that the Dirichlet type coupling condition $\phi\vert_\Sigma = \psi$ on $\Sigma$ can be recovered in the asymptotic limit $K\to 0$. From a physical point of view, the boundary condition \eqref{eq:7} with $K>0$ makes sense if the materials on the boundary may be different from those in the bulk. For instance, this might be the case if the materials on the boundary are transformed by chemical reactions.

An important property of the system \eqref{CHB} (for any $K\ge 0$) is its thermodynamic consistency with respect to the free energy functional
\begin{align}
    \label{DEF:E}
	E_K(\phi,\psi) 
	&:= \iO \Big(\frac \epsilon 2 |\nabla \phi|^2 + \frac 1 \epsilon F(\phi) \Big)
	   + \iG \Big(\frac {\epsilon_\Gamma}2 |\nabla_\Gamma \psi|^2 + \frac 1 {\epsilon_\Gamma} G(\psi) \Big)
    \notag\\
	&\quad + \frac{\sigma(K)}2 \iG (\psi-\phi)^2,
\end{align}
where 
$\sigma(K)=K^{-1}$ if $K>0$ and $\sigma(K)=0$ if $K=0$.
This means that sufficiently regular solutions of \eqref{CHB} satisfy the \textit{energy dissipation law}
\begin{align*}
\begin{aligned}
	\frac{\mathrm d}{\mathrm dt} E_K(\phi,\psi) 
	&= - \iO \lambda(\phi) |\bv|^2
    - \iO M_\Omega(\phi)|\nabla \mu|^2
	- \iG M_\Gamma(\psi)|\nabla_\Gamma \theta|^2
    \\
	&\quad - 2 \iO \nu(\phi)|\D\bv|^2
	- \iG \gamma(\psi) |\bv|^2,
\end{aligned}
\end{align*}
on $[0,T]$, where all the terms on the right-hand side are non-positive and can be interpreted as the dissipation rate. Compared to the model in \cite{Giorgini-Knopf}, the additional term $\iO \lambda(\varphi)\abs{\bv}^2$ arises due to dissipative effects caused by the porous medium.

As mentioned above, due to the usage of the no-mass-flux condition $M_\Omega(\phi) \deln \mu=0$ on $\Sigma$ (see \eqref{eq:8}), we do not describe any transfer of material between bulk and surface. This entails that the bulk mass and the surface mass are conserved separately, i.e., sufficiently regular solutions satisfy the mass conservation laws
\begin{align}
	\label{mass:cons:bulk}
	\frac 1 {|\Omega|} \iO  \phi(t) & =  \frac 1 {|\Omega|}\iO \phi_0 =:\mz
	\\
	\frac 1 {|\Gamma|} \iG  \psi(t) & = \frac 1 {|\Gamma|}\iG \psi_0 =: \mgz
	\label{mass:cons:bd}
\end{align}
for all $t\in [0,T]$.

\paragraph{Goals and novelties of this paper.}
After collecting some notations, assumptions, preliminaries and important tools, we introduce our main results in Section~\ref{SEC:NOT:RES}. Let us now briefly discuss the results with emphasis on the objectives of this work.

\begin{enumerate}[label={\textbf{(\Roman*)}}]
    \item Our first main goal is to establish the weak well-posedness to system \eqref{CHB} in the case of regular potentials $F$ and $G$ for all choices $K\ge 0$. 
    
    Here, the case $K=0$ is the most delicate one, because then the boundary conditions \eqref{eq:7} and $\eqref{eq:8}_1$ are very difficult to combine. The reason is that the Dirichlet type boundary condition \eqref{eq:7} already fixes one degree of freedom in the space of test functions, whereas the Neumann boundary condition $\eqref{eq:8}_1$ does not. Therefore, it is not possible to directly construct a weak solution by a Faedo--Galerkin approach.
    
    In previous works in the literature (e.g., \cite{Garcke2020,Knopf2020,Knopf2021a}), where the bulk-surface Cahn--Hilliard system was considered without any velocity field, such issues of unmatched boundary conditions could be overcome by exploiting the gradient flow structure of the system and employing a minimizing movement scheme for the construction of a weak solution. However, as in our case the bulk-surface Cahn--Hilliard system is coupled to a velocity equation, the whole system is not a gradient flow anymore and therefore, a minimizing movement technique is not applicable.

    To overcome this issue, our strategy is to first prove the existence of weak solutions to \eqref{CHB} for any $K>0$. In this case, \eqref{eq:7} is a Robin type boundary condition, which, in contrast to the Dirichlet type boundary condition associated with $K=0$,  does not fix any degree of freedom in the space of test functions. Therefore,  it can be combined very well with the Neumann boundary condition $\eqref{eq:8}_1$. Having a weak solution to \eqref{CHB} for any $K>0$ at hand, we can finally prove the existence a weak solution to \eqref{CHB} with $K=0$ by passing to the asymptotic limit $K\to 0$.

    The existence of a weak solution to \eqref{CHB} for any $K>0$ is stated in Theorem~\ref{THM:WEAK}. For the proof, we use a semi-Galerkin scheme, where only the phase-fields and the chemical potentials are discretized using a Galerkin scheme, but the corresponding velocity field is obtained by directly solving the Brinkman subsystem on the continuous level. For a Cahn--Hilliard--Brinkman model without dynamic boundary conditions, such an approach had already been employed in \cite{Ebenbeck2019}. Based on this ansatz, a sequence of approximate solutions can be constructed by means of the Cauchy--Peano theorem and eventually, after deriving suitable uniform bounds, we show that this sequence converges to a weak solution of \eqref{CHB} with $K>0$. A posteriori, we establish higher regularity of the phase-field functions provided that the domain is sufficiently smooth. The corresponding proof can be found in Subsection~\ref{SUBSEC:EXREG}.

    In Theorem~\ref{THM:Kto0}, we construct a weak solution to \eqref{CHB} in the case $K=0$ by using the result of Theorem~\ref{THM:WEAK}, and passing to the limit $K\to 0$ on the level of weak solutions. A posteriori, as in the case $K>0$, higher regularity of the phase-fields can be shown if the domain is sufficiently regular.
    The proof is carried out in Subsection~\ref{SUBSEC:EXREG:0}.

    \item The second main goal is to prove the uniqueness as well as the continuous dependence on the initial data of the weak solution to \eqref{CHB} with regular potentials $F$ and $G$ in all cases $K\ge 0$, provided that the mobility functions and the viscosity function are constant and the domain is sufficiently regular. 
    
    The result is stated in Theorem~\ref{THM:UQ} and the proof is presented in Subsection~\ref{SUBSEC:UNIREG}. To prove the assertion, we adapt the uniqueness proof devised in \cite{Giorgini-Knopf} for a Navier--Stokes--Cahn--Hilliard model to our situation. We point out that the proof in \cite{Giorgini-Knopf} only worked in two dimensions, whereas our new proof for the system \eqref{CHB} is also valid in three dimensions. Moreover, our new proof requires a certain bulk-surface chain rule for time derivatives that is established in Proposition~\ref{PROP:A} in the Appendix.

    \item Our third main result is to prove the existence of a weak solution to system \eqref{CHB} in the case of possibly singular potentials $F$ and $G$ for all choices $K\ge 0$. 

    The result is stated in Theorem~\ref{THM:WEAK:SING} and the proof is presented in Section~\ref{SEC:POTSING}. The idea of the proof is to approximate the convex parts of the singular potentials $F$ and $G$ by regular potentials $F_\eps$ and $G_\eps$ (with $\eps>0$) that are constructed via a Moreau--Yosida regularization. For the approximate regular potentials, the existence of a corresponding weak solutions is already known from Theorem~\ref{THM:WEAK}. The most technical part of the proof of Theorem~\ref{THM:WEAK:SING} is to derive uniform bounds on the potential terms involving $F_\eps'$ and $G_\eps'$. This requires a certain condition on the singular potentials, namely that $G'$ dominates $F'$ in a suitable way (cf~\ref{ass:2:pot:dominance}).
    Eventually, by passing to the limit $\eps\to 0$, the claim of Theorem~\ref{THM:WEAK:SING} is established.
\end{enumerate}

\section{Preliminaries and main results}
\label{SEC:NOT:RES}
\setcounter{equation}{0}

\subsection{Notation}
Throughout the manuscript, $\Omega$ is a bounded domain in~$\R^d$, $d \in \{2,3\}$,
with \Lip\ boundary $\Gamma:=\partial\Omega$
and $\nnn$ is the associated outward unit normal vector field.
We write $|\Omega|$ and $|\Gamma|$ to denote the Lebesgue measure of $\Omega$ and the Hausdorff measure of $\Gamma$, respectively.
For any given Banach space $X$, we denote its norm by $\norma{\,\cdot\,}_X$,
its dual space by $X^*$ and the duality pairing between $X^*$ and $X$ by $\<\cdot,\cdot>_X$.
Besides, if $X$ is a Hilbert space, we write $(\cdot,\cdot)_X$ to denote the corresponding inner product.
For every $1 \leq p \leq \infty$, $k \geq 0$ and $s>0$, the standard Lebesgue spaces, Sobolev--Slobodeckij spaces and Sobolev spaces defined on $\Omega$ are denoted by $L^p(\Omega)$, $W^{k,p}(\Omega)$ and $H^s(\Omega)$, and their standard norms are denoted by $\norma{\,\cdot\,}_{L^p(\Omega)}$ $\norma{\,\cdot\,}_{W^{k,p}(\Omega)}$ and $\norma{\,\cdot\,}_{H^s(\Omega)}$, respectively. It is well known that the spaces $H^0(\Omega)= L^2(\Omega)$ and $H^k(\Omega)= W^{k,2}(\Omega)$ for all $k\in\N$ can be identified, and these spaces are Hilbert spaces.
The Lebesgue spaces, Sobolev--Slobodeckij spaces and Sobolev spaces on the boundary $\Gamma$ are defined analogously. For more details, we refer to \cite{Triebel,Triebel2}.
We usually utilize bold letters to represent spaces for vector- or matrix-valued functions. For example, we denote $\LL^p(\Omega)$ instead of writing $L^p(\Omega;\R^d)$ or $L^p(\Omega;\R^{d\times d})$, and so on.
Moreover, for any Banach spaces $X$ and $Y$, their intersection $X\cap Y$ is also a Banach space subject to the norm
\begin{align*}
	\norma{v}_{X\cap Y}:= \norma{v}_X + \norma{v}_Y, 
	\quad 
	v \in X \cap Y.
\end{align*}
As some spaces will appear very frequently, we introduce the following shortcuts:
\begin{align*}
    &\begin{aligned}
      &H  := \Lx2 , \quad  
      &&H_\Gamma := L^2(\Gamma),
      \quad 
      &&V := \Hx1,   
      \quad
      &&\VG := H^1(\Gamma),
      \\ 
      &\HH := L^2(\Omega;\R^d),
      \quad 
      &&\HHG :=  L^2(\Gamma;\R^d),
      \quad 
      &&\VV := H^1(\Omega;\R^d),
      \quad
      &&\BH := L^2(\Omega;\R^{d\times d}),
    \end{aligned}
    \\
    &\Hs := \big\{\w \in \HH : \div (\w) =0 \;\text{in $\Omega$},\; \w\vert_\Gamma \cdot \nnn =0 \;\text{on $\Gamma$} \big\},
    \quad
	\Vs := 
    \VV \cap \Hs,
    \\
    &
    \VV_\nnn := \big\{\w \in \VV : \w\vert_\Gamma \cdot \nnn =0 \;\text{on $\Gamma$} \big\},
    \quad W_\nnn := \big\{w \in H^2(\Omega) : \deln w =0 \;\text{on $\Gamma$} \big\}.
\end{align*}
We point out that in the definition of $\Hs$, the relation $\div (\w) =0$ in $\Omega$ is to be understood in the sense of distributions. This already implies $\w\vert_\Gamma \cdot \nnn \in H^{-1/2}(\Gamma)$, and therefore, the relation $\w\vert_\Gamma \cdot \nnn =0$ on $\Gamma$ is well-defined.
As $\Hs$ and $\Vs$ are closed linear subspaces of the Hilbert spaces $\HH$ and $\VV$, respectively, they are also Hilbert spaces.
We further introduce the bulk-surface product spaces
\begin{align*}
    \CH &:= H \times H_\Gamma,
    \quad 
    \CV := V \times \VG,
    \\
    \CV_K &:= 
    \begin{cases}
        \CV &\text{if $K>0$},\\
        \{ (w,w_\Gamma) \in \CV: w_\Gamma = {w}\vert_{\Gamma} \,\, \text{on $\Gamma$}\}
        &\text{if $K=0$},
    \end{cases}
\end{align*}
and endow them with the corresponding inner products 
\begin{alignat*}{2}
    \big((v,v_\Gamma),(w,w_\Gamma)\big)_\CH &:= (v,w)_{H} + {(v_\Gamma,w_\Gamma)}_{H_\Gamma}
    &&\quad
    \text{for all $(v,v_\Gamma),(w,w_\Gamma) \in{ \CH}$,}
    \\
    \big((v,v_\Gamma),(w,w_\Gamma)\big)_\CV &:= (v,w)_{V} + {(v_\Gamma,w_\Gamma)}_{\VG}
    &&\quad
    \text{for all $(v,v_\Gamma),(w,w_\Gamma) \in{ \cal V}$,}
\end{alignat*}
so that $\CH$, $\CV$ and $\CV_K$ are Hilbert spaces.
It is straightforward to check that
\begin{align*}
    &\CV^* = V^* \times V_\Gamma^*,
    \\
    &\CV_K \subset \CV
    \quad\text{and}\quad
    \CV^* \subset \CV_K^* \quad\text{if $K= 0$},
    \\
    &\CV_K = \CV
    \quad\text{and}\quad
    \CV^* = \CV_K^* \quad\text{if $K> 0$}.
\end{align*}%
For any $v\in\Vp$ and $v_\Gamma \in \VG^*$, we define the generalized mean values by
\begin{align}
  \<v>_\Omega := \frac 1{|\Omega|} \, \< v , 1 >_{V}\,,
  \quad 
  \<v_\Gamma>_\Gamma := \frac 1{|\Gamma|} \, \< v_\Gamma, 1 >_{\VG}\,,
  \label{def:mean}
\end{align}
where $1$ represents the constant function assuming value $1$ in $\Omega$ and  on $\Gamma$, respectively.
To introduce a weak formulation of \eqref{CHB}, it will be useful to define the function 
\begin{align}
    \label{DEF:SIGMA}
    \sigma: [0,\infty) \to [0,\infty),\quad
    \sigma(r) = 
    \begin{cases}
        \frac 1r &\text{if $r>0$},\\
        0 &\text{if $r=0$}
    \end{cases}
\end{align}
to handle the cases $K>0$ and $K=0$ simultaneously.

\subsection{General assumptions}

\begin{enumerate}[label={\bf (A\arabic{*})}, ref={\bf (A\arabic{*})}]
	\item \label{ass:1:dom}
	The set $\Omega \subset \R^d$ with $ d \in \{2, 3\}$ is a bounded Lipschitz domain.
	
	\item \label{ass:3:mobility}
	The mobility functions $M_\Omega : \R \to \R$ and $M_\Gamma : \R \to \R$ are continuous, bounded and uniformly positive.
	This means that there exist positive constants $M_1$, $M_2$, $M_{\Gamma,1}$ and $M_{\Gamma,2}$ such that
	\begin{align*}
	    0 < M_1 \leq M_\Omega(r) \leq M_2,
	    \quad 
	    0 < M_{\Gamma,1}\leq M_\Gamma(r) \leq M_{\Gamma,2}
	    \quad \text{for all $r \in \R.$}
	\end{align*}
	
	\item \label{ass:4:viscosity} 
    The viscosity function $\nu : \R \to \R$, the permeability function $\lambda : \R \to \R$, and the friction parameter $\gamma : \R \to \R$ are continuous and nonnegative.
    There further exist constants $\nu_1,\nu_2,\lambda_2,\gamma_2>0$ as well as $\gamma_1\ge 0$ such that
	\begin{align*}
	    0 < \nu_1 \leq \nu(r) \leq \nu_2,
	    \quad 
        0 \le \lambda(r) \leq \lambda_2,
        \quad
	    0 \le \gamma_1 \le \gamma(r) \leq \gamma_2
	    \quad \text{for all $r \in \R.$}
	\end{align*}
	Moreover, we assume that one of the following conditions holds:
    \begin{enumerate}[label={\bf (A3.\arabic{*})}, ref={\bf (A3.\arabic{*})}]
        \item\label{ass:4:viscosity:1} It holds $\gamma_1>0$.
        \item \label{ass:4:viscosity:2}
        The domain $\Omega$ has the following property:
        \begin{equation}
        \label{COND:SYMM}
            \begin{cases}
                \text{If $d=2$, $\Omega$ is not a circle.}\\
                \text{If $d=3$, $\Omega$ is not rotationally symmetric.}
            \end{cases}
        \end{equation}
    \end{enumerate}
\end{enumerate}

\subsection{Preliminaries}

In our mathematical analysis, we will need the following versions of \textit{Korn's inequality}. 
\begin{lemma} \label{LEM:KORN}
Suppose that \ref{ass:1:dom} holds.
\begin{enumerate}[label=\textnormal{(\alph*)}, ref=\textnormal{(\alph*)}, topsep=0ex]
    \item\label{Korn1}
    There exists a constant $C_\mathrm{Korn}$ depending only on $\Omega$ such that 
    \begin{equation}
        \norm{\nabla\bv}_{\BH}
        \leq C_\mathrm{Korn} \left( \norm{\D \bv}_{\BH} 
        + \norm{\bv}_{\HHG} \right)
        \quad 
        \text{for all $\bv \in \VV$.}
    \end{equation}
    \item\label{Korn2}
    If $\Omega$ additionally fulfills the condition \eqref{COND:SYMM}, then there exists a constant $C_\mathrm{Korn}^*$ depending only on $\Omega$ such that
    \begin{equation}
        \norm{\nabla\bv}_{\BH}
        \leq C_\mathrm{Korn}^* \norm{\D \bv}_{\BH} 
        \quad 
        \text{for all $\bv \in \VV$.}
    \end{equation}
\end{enumerate}
\end{lemma}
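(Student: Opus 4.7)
The plan is a compactness-and-contradiction argument based on the classical Korn's second inequality
$$\|\bv\|_{\VV}\ \leq\ C_0\bigl(\|\D\bv\|_{\BH}+\|\bv\|_{\HH}\bigr),$$
which holds on every bounded Lipschitz domain, together with Rellich's compact embedding $\VV\emb\HH$ and the compactness of the trace map $\VV\emb\HHG$ (the latter factors through the continuous trace into $H^{1/2}(\Gamma;\R^d)$ and the compact embedding $H^{1/2}(\Gamma;\R^d)\emb\HHG$).

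For part \ref{Korn1}, I would suppose the inequality fails and extract a sequence $\{\bv_n\}\subset\VV$ with $\|\nabla\bv_n\|_{\BH}=1$ and $\|\D\bv_n\|_{\BH}+\|\bv_n\|_{\HHG}\to 0$. Decomposing $\bv_n=(\bv_n-\meano{\bv_n})+\meano{\bv_n}$, the Poincaré inequality controls the mean-zero part in $\VV$, while the constant part is handled by the triangle inequality on the boundary,
$$|\meano{\bv_n}|\,|\Gamma|^{1/2}=\|\meano{\bv_n}\|_{\HHG}\ \leq\ \|\bv_n\|_{\HHG}+\|\bv_n-\meano{\bv_n}\|_{\HHG}\ \leq\ \|\bv_n\|_{\HHG}+C\|\bv_n-\meano{\bv_n}\|_{\VV},$$
so $(\bv_n)$ is bounded in $\VV$. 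Along a subsequence $\bv_n\rightharpoonup\bv$ in $\VV$ with strong convergence in $\HH$ and in $\HHG$; by weak lower semicontinuity $\D\bv=0$ a.e.\ in $\Omega$ and $\bv=0$ on $\Gamma$. Hence $\bv(x)=a+Wx$ with $W^\top=-W$, and since the affine hull of $\Gamma$ is $\R^d$, vanishing on $\Gamma$ forces $W=0$ and $a=0$, i.e.\ $\bv\equiv 0$. Korn's second inequality applied to $\bv_n$ then gives $\|\bv_n\|_{\VV}\to 0$, contradicting the normalization.

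For part \ref{Korn2}, the boundary term is no longer available, so the argument has to exploit the geometric condition \eqref{COND:SYMM}. In the setting of the Brinkman subsystem the natural space is $\VV_\nnn$, and \eqref{COND:SYMM} enters exactly to ensure that the kernel of $\D$ intersects $\VV_\nnn$ only at $\0$. Arguing again by contradiction, I would normalize $\|\bv_n\|_{\VV}=1$ with $\|\D\bv_n\|_{\BH}\to 0$. Weak compactness in $\VV$ together with Rellich yields a limit $\bv$ with $\D\bv=0$, hence $\bv(x)=a+Wx$; moreover $\bv\cdot\nnn=0$ on $\Gamma$ by continuity of the trace. A pure translation $\bv\equiv a$ with $a\cdot\nnn\equiv 0$ on $\Gamma$ is impossible in a bounded domain (the unit normals cannot all lie in a fixed hyperplane), while a nontrivial rotational part $\bv(x)=Wx$ tangent to $\Gamma$ would generate a one-parameter continuous symmetry of $\Omega$, which is ruled out by \eqref{COND:SYMM}. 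Therefore $\bv=\0$, so $\bv_n\to\0$ in $\HH$, and Korn's second inequality forces $\|\bv_n\|_{\VV}\to 0$, a contradiction.

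The main obstacle is the rigid-motion classification in \ref{Korn2}: one must verify that \eqref{COND:SYMM} indeed excludes every nontrivial affine field $\bv(x)=a+Wx$ with $W^\top=-W$ that is tangent to $\Gamma$, both in $d=2$ (where circular symmetry is the only continuous symmetry of a bounded planar domain) and in $d=3$ (where one has to rule out one-parameter subgroups of the Euclidean group preserving $\Omega$, i.e.\ rotational symmetries about some axis). Once this geometric input is in place, the remaining analytic ingredients — Korn's second inequality, Rellich--Kondrachov, and the compactness of the trace — are standard and assemble immediately into the claimed estimates.
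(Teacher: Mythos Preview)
The paper does not actually prove Lemma~\ref{LEM:KORN}; it simply cites \cite{Abels2012} (and \cite{Necas} for the three-dimensional version of \ref{Korn2}). Your compactness-and-contradiction argument is the standard route and is essentially what one finds in those references, so there is nothing to compare against in the paper itself.

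Two remarks on your write-up. First, you correctly notice that \ref{Korn2} cannot hold on all of $\VV$ as literally stated (any infinitesimal rotation $\bv(x)=Wx$ with $W^\top=-W$ violates it); the inequality is only used in the paper for $\bv\in\Vs\subset\VV_\nnn$, and your proof works on $\VV_\nnn$, which is the correct setting. Second, in \ref{Korn2} you treat the translational and rotational parts of the limiting rigid motion separately, but a general rigid motion $\bv(x)=a+Wx$ with $\bv\cdot\nnn=0$ on $\Gamma$ need not split this way a priori; the clean argument is that such a $\bv$ generates a one-parameter group of Euclidean isometries leaving $\Omega$ invariant, and for a bounded domain this forces the translational component along $\ker W$ to vanish, reducing to a pure rotation about some axis (in $d=3$) or point (in $d=2$), which \eqref{COND:SYMM} excludes. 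With that refinement the argument is complete.
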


\medskip

\noindent Both \ref{Korn1} and \ref{Korn2} can be found in \cite[Appendix]{Abels2012}. In the three-dimensional case, \ref{Korn2} had already been established before in \cite[Theorem~3.5]{Necas}. As illustrated in \cite{Abels2012}, the two-dimensional version can be proved analogously.

Moreover, besides the standard Poincar\'e--Wirtinger inequality in $\Omega$, we need a Poincar\'e type inequality on $\Gamma$.

\begin{lemma} \label{LEM:POIN}
    Suppose that \ref{ass:1:dom} holds. Then, there exists a constant $C_\mathrm{P} \ge 0$ depending only on $\Omega$ such that 
    \begin{equation}
        \label{EST:POIN}
        \norm{v - \meang{v}}_{\HG} \le C_P \norm{\nabla v}_{\HHG} 
        \quad 
        \text{for all $v \in \VG$.}
        \end{equation}
\end{lemma}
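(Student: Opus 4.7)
The plan is to prove this standard Poincar\'e--Wirtinger type inequality on the compact Lipschitz manifold $\Gamma$ by a compactness-contradiction argument. Since $\Gamma$ is the boundary of a bounded Lipschitz domain, it is a compact Lipschitz hypersurface without boundary, and the embedding $V_\Gamma = H^1(\Gamma) \hookrightarrow H_\Gamma = L^2(\Gamma)$ is compact (Rellich--Kondrachov on a compact Lipschitz manifold). Note that although the statement writes $\nabla v$, for $v \in V_\Gamma$ this has to be interpreted as the tangential surface gradient $\nabla_\Gamma v$.

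First, I would argue by contradiction. Suppose no such constant $C_P$ exists. Then there is a sequence $(v_n) \subset V_\Gamma$ with
\begin{equation*}
    \norma{v_n - \meang{v_n}}_{\HG} = 1
    \qquad\text{and}\qquad
    \norma{\nabla_\Gamma v_n}_{\HHG} \le \tfrac{1}{n}.
\end{equation*}
Setting $w_n := v_n - \meang{v_n}$, I get $\meang{w_n} = 0$, $\norma{w_n}_{\HG} = 1$, and $\norma{\nabla_\Gamma w_n}_{\HHG} \to 0$. In particular, $(w_n)$ is bounded in $V_\Gamma$. By the compactness of the embedding $V_\Gamma \emb \HG$, a (non-relabelled) subsequence converges strongly in $\HG$ to some $w \in \HG$, and the weak lower semicontinuity of the $V_\Gamma$-norm then forces $w \in V_\Gamma$ with $\nabla_\Gamma w = 0$ in $\HHG$ (since $\nabla_\Gamma w_n \to 0$ strongly).

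Next, I would use the fact that on the connected components of $\Gamma$, a function with vanishing surface gradient is locally constant. If $\Omega$ is connected with connected boundary (the standard case), $w$ is constant on $\Gamma$. Passing the mean to the limit via the continuity of the linear functional $v \mapsto \meang{v}$ on $\HG$ yields $\meang{w} = 0$, hence $w \equiv 0$. This contradicts $\norma{w}_{\HG} = \lim \norma{w_n}_{\HG} = 1$, and the inequality follows. If $\Gamma$ has several connected components, the argument is essentially the same: $w$ is piecewise constant on the components, and one needs to track that $\meang{w}=0$ still pins down only an overall scalar ambiguity, which contradicts normalization; alternatively, one applies the Poincar\'e inequality component by component and combines the resulting bounds.

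The main obstacle, such as it is, is purely technical: justifying the Rellich compactness and the "zero surface gradient implies locally constant" step in the Lipschitz setting, where $\Gamma$ is not smooth. Both facts are standard in the theory of Sobolev spaces on Lipschitz manifolds (see, e.g., the references to Triebel's monographs cited in the paper), so one could alternatively give a one-line proof by simply invoking the general Poincar\'e inequality on compact Lipschitz manifolds without boundary. I would favor the compactness argument sketched above, as it is self-contained modulo the embedding $V_\Gamma \emb \HG$, which is used elsewhere in the paper.
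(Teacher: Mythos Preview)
Your compactness-contradiction argument is correct for connected $\Gamma$ and is the standard self-contained route. The paper takes a different path: it does not argue directly but simply invokes a bulk-surface Poincar\'e inequality from an external reference (Knopf--Liu, Lemma~A.1), specializing the bulk function to $u\equiv 0$ and fixing the parameters $K=2$, $\alpha=\beta=1$. Your approach is more elementary and transparent; the paper's is a one-line citation that leans on a result already in their toolbox.

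One point does need correction. Your handling of the case where $\Gamma$ has several connected components is wrong, and not for a technical reason: the inequality \eqref{EST:POIN} as stated \emph{fails} for disconnected $\Gamma$. Indeed, take $v$ constant on each component with the constants chosen so that $\meang{v}=0$; then $\nabla_\Gamma v=0$ but $v\not\equiv 0$. In your argument this manifests as follows: the limit $w$ is only piecewise constant, and the single linear constraint $\meang{w}=0$ does not force all the constants to vanish, so no contradiction arises. Applying Poincar\'e componentwise does not recover the global-mean version either, since the component means need not agree with $\meang{v}$. So connectedness of $\Gamma$ is a genuine hypothesis here; you should state it explicitly rather than try to argue around it.
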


\noindent
This result follows directly from the bulk-surface Poincar\'e inequality established in \cite[Lemma~A.1]{knopf-liu} (with the function in the bulk being chosen as $u\equiv 0$ and the parameters being chosen as $K=2$, $\alpha=\beta=1$).

\medskip

We further recall the following result on interpolation between Sobolev spaces:
\begin{lemma}
    \label{LEM:INT}
    Let $U\subset\R^m$ with $m \in \mathbb N$ be a bounded Lipschitz domain, and suppose that $\theta\in (0,1)$ and $r,s_0,s_1 \in\R$ satisfy
    $$r = (1-\theta) s_0 + \theta s_1.$$ 
    We further assume that $U$ is of class $C^\ell$ with an integer $\ell\ge \max\{s_0,s_1\}$.
    Then, there exist positive constants $C_U$ and $C_{\partial U}$ depending only on $U$, $r$, $s_0$, $s_1$ and $\theta$ such that the following interpolation inequalities hold:
    \begin{align}
        \label{LEM:INT:1}
        \norm{f}_{H^r(U)} 
        &\le C_U \norm{f}_{H^{s_0}(U)}^{1-\theta}
            \norm{f}_{H^{s_1}(U)}^{\theta},
        \\
        \label{LEM:INT:2}
        \norm{f}_{H^r(\partial U)} 
        &\le C_{\partial U} \norm{f}_{H^{s_0}(\partial U)}^{1-\theta}
            \norm{f}_{H^{s_1}(\partial U)}^{\theta}.
    \end{align}
\end{lemma}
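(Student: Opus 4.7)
The assertion is the classical complex interpolation inequality for the Hilbert Sobolev scale, and the plan is to reduce both claims to the corresponding inequality on Euclidean space, transferred to $U$ via a bounded extension operator and to $\partial U$ via local charts.

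The model case on $\R^m$ follows directly from the Fourier-analytic characterization $\norm{f}_{H^s(\R^m)}^2 = \int_{\R^m}(1+|\xi|^2)^s\,|\widehat f(\xi)|^2\,d\xi$. Indeed, writing
\[
(1+|\xi|^2)^r|\widehat f(\xi)|^2 = \bigl[(1+|\xi|^2)^{s_0}|\widehat f(\xi)|^2\bigr]^{1-\theta}\bigl[(1+|\xi|^2)^{s_1}|\widehat f(\xi)|^2\bigr]^{\theta}
\]
and applying H\"older's inequality with conjugate exponents $1/(1-\theta)$ and $1/\theta$ immediately yields
\[
\norm{f}_{H^r(\R^m)} \le \norm{f}_{H^{s_0}(\R^m)}^{1-\theta}\norm{f}_{H^{s_1}(\R^m)}^{\theta}.
\]
This argument is insensitive to the sign of $s_0, s_1, r$, since the Bessel weight is positive throughout.

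For \eqref{LEM:INT:1}, the regularity hypothesis $\ell\ge \max\{s_0,s_1\}$ allows one to invoke a universal extension operator $E\colon H^s(U)\to H^s(\R^m)$ that is bounded for every $s$ in the range $[\min\{s_0,s_1\},\max\{s_0,s_1\}]$, with negative orders handled by duality (e.g.\ via Stein's or Rychkov's construction, cf.\ \cite{Triebel}). Applying the Euclidean inequality to $Ef$ and using $\norm{f}_{H^s(U)}\le \norm{Ef}_{H^s(\R^m)}$ gives \eqref{LEM:INT:1} with a constant depending only on $U$, $r$, $s_0$, $s_1$, and $\theta$. The boundary inequality \eqref{LEM:INT:2} follows from the same Euclidean model via a finite atlas $\{(V_j,\chi_j)\}$ flattening $\partial U$ locally, together with a subordinate partition of unity $\{\eta_j\}$: since $\norm{f}_{H^s(\partial U)}$ is equivalent to $\sum_j \norm{(\eta_j f)\circ\chi_j^{-1}}_{H^s(\R^{m-1})}$ with constants depending only on the atlas, applying the Euclidean interpolation inequality to each pullback and summing over $j$ yields the claim.

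The only technical subtlety is to guarantee that both the extension operator $E$ and the pullbacks via the boundary charts are simultaneously bounded on all three Sobolev orders $s_0,s_1,r$. This is precisely why the smoothness requirement $\ell\ge \max\{s_0,s_1\}$ is imposed: under this assumption one may appeal to a universal extension covering the whole relevant scale, and the charts $\chi_j$ induce isomorphisms between the involved Sobolev spaces on $\partial U$ and on $\R^{m-1}$. With these ingredients in hand, the transfer from the Euclidean model is direct and produces the stated inequalities with constants depending only on the indicated parameters.
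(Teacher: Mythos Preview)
Your argument is correct: the Fourier-analytic H\"older estimate on $\R^m$ is the standard route to the multiplicative interpolation inequality, and the transfer to $U$ via a universal extension operator and to $\partial U$ via a finite atlas with partition of unity is the canonical reduction. The paper does not give an independent proof but simply cites the corresponding interpolation theorems in Triebel's monographs \cite{Triebel,Triebel2}; your sketch is a self-contained outline of precisely those results, so the approaches agree in substance.

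One minor point worth tightening: when summing the chartwise inequalities $a_j \le b_j^{1-\theta}c_j^{\theta}$ over the finite atlas, you should invoke H\"older's inequality on the sum to recover the product form $(\sum b_j)^{1-\theta}(\sum c_j)^{\theta}$ rather than just ``summing over $j$''; and the claim that the extension operator is bounded on negative-order spaces ``by duality'' requires that the same operator works across the whole scale, which is exactly what Rychkov's universal extension (valid already for Lipschitz domains) provides. With these clarifications your proof is complete.
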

\noindent Inequality \eqref{LEM:INT:1} follows from an interpolation result shown in \cite[Sec~4.3.1, Theorem~1 and Remark~1]{Triebel}, whereas \eqref{LEM:INT:2} follows from an interpolation result presented in \cite[Sec~7.4.5, Remark~2]{Triebel2}.

\subsection{The Cahn--Hilliard--Brinkman system with regular potentials}

First, we present our mathematical results for system \eqref{CHB} in the case of regular double-well potentials $F$ and $G$. As mentioned above, we simply set $\epsilon=\epsilon_\Gamma=1$, as the exact values of these interface parameters do not have any impact on the mathematical analysis as long as they are positive.

\subsubsection{Assumptions for regular potentials}

\begin{enumerate}[label={\bf (R\arabic*)}, ref={\bf (R\arabic*)}]
	\item  \label{ass:pot:reg:1}
	The potentials $F:\R\to[0,\infty)$ and $G:\R\to[0,\infty)$ are continuously differentiable,
	and there exist exponents $p,q \in \R$ with
    \begin{align}
    \label{EXP}
        p\in\begin{cases}
            [2,\infty) &\text{if $d=2$,}\\
            [2,6] &\text{if $d=3$,}
        \end{cases}
        \quad\text{and}\quad
        q\in[2,\infty)
    \end{align}
    as well as constants $c_{F'},c_{G'}\ge 0$ such that 
    \begin{align}
    \label{Ass:F'}
    \abs{F'(r)} &\le
        c_{F'}(1+\abs{r}^{p-1}),
    \\
    \label{Ass:G'}
    \abs{G'(r)} &\le  
    c_{G'}(1+\abs{r}^{q-1})
    \end{align}
    for all $r\in\R$.
	This implies that there exist constants $c_F,c_G\ge 0$ such that $F$ and $G$ fulfill the growth conditions
	\begin{align}
		\label{GR:F}
        F(r) &\le c_{F}(1+\abs{r}^p), 
        \\
		\label{GR:G}
        \quad 
        G(r) &\le c_{G}(1+\abs{r}^q)
	\end{align}
	for all $r\in\R$.
    \item  \label{ass:pot:reg:2}
    In addition to \ref{ass:pot:reg:1}, $F$ and $G$ are twice continuously differentiable and there exist
    constants $c_{F''},c_{G''}\ge 0$ such that 
    \begin{align}
    \label{Ass:F''}
    \abs{F''(r)} &\le
        c_{F''}(1+\abs{r}^{p-2}),
    \\
    \label{Ass:G''}
    \abs{G''(r)} &\le  
    c_{G''}(1+\abs{r}^{q-2})
    \end{align}
    for all $r\in\R$, where $p$ and $q$ are the exponents introduced in \eqref{EXP}.   
\end{enumerate}

\subsubsection{Definition of weak solutions for regular potentials}

\begin{definition}
    \label{DEF:WS:REG}
    Let $K\ge 0$ be arbitrary.
    Suppose that \ref{ass:1:dom}--\ref{ass:4:viscosity} and \ref{ass:pot:reg:1} are fulfilled and let $(\phi_0,\psi_0)\in \CV_K$ be any initial data.
    A quintuplet $(\bv, \phi, \mu, \psi, \theta)$ is called a weak solution of the Cahn--Hilliard--Brinkman system \eqref{CHB} if the following conditions are fulfilled:
    \begin{enumerate}[label=\textnormal{(\roman*)}, ref=\textnormal{(\roman*)}]
        \item \label{DEF:WS:REG:i} The functions $\bv$, $\phi$, $\mu$, $\psi$ and $\theta$ have the regularity
        \begin{align*}
    	\bv & \in \L2 {\Vs}, \quad \bv|_\Gamma \in \L2 {\HH_\Gamma},
    	\\
    	(\phi,\psi) & \in \H1 { \CV^*} \cap C^0([0,T];\CH)\cap \L\infty {\CV_K} ,
        \\
        (\mu,\theta) & \in \L2 \CV.
        \end{align*}
         
        \item \label{DEF:WS:REG:ii} The variational formulation 
        \begin{subequations}
        \label{wf}
        \begin{align}
            \label{wf:1}
        	& \begin{aligned}
        	    &2 \iO \nu(\phi) \D\bv : \D\w 
        	       + \iO \lambda(\phi) \bv \cdot \w 
        	       + \iG  \gamma(\psi)\bv \cdot \w 
                \\
                &\quad =
                - \iO \phi \nabla \mu \cdot \w
                - \iG \psi \nabla_\Gamma\th \cdot \w,
            \end{aligned}        	
             \\
          	\label{wf:2}
            & \<\delt \phi, \zeta>_{V}
            - \iO \phi \bv \cdot \nabla \zeta
            + \iO M_\Omega(\phi)\nabla \mu \cdot \nabla \zeta
            =0 ,
            \\
            \label{wf:3}
            & \<\delt \psi, \zetaG>_{\VG}
            - \iG \psi \bv \cdot \nabla_\Gamma \zetaG
            + \iG M_\Gamma(\psi)\nabla_\Gamma \theta \cdot \nabla_\Gamma \zetaG
            =0,
            \\
            & 
            \label{wf:4}
            \begin{aligned}
            \iO \mu \eta 
            + \iG \theta \etaG
            &= \iO \nabla \phi \cdot \nabla \eta 
            + \iO F'(\phi) \eta
            + \iG \nabla_\Gamma \psi \cdot \nabla_\Gamma \etaG 
            \\
            &\quad + \iG G'(\psi) \etaG
            + \sigma(K) \iG (\psi - \phi) (\etaG - \eta)
            \end{aligned}
        \end{align}
        \end{subequations}
        holds a.e.~in $[0,T]$ for all $\w \in \Vs$, $\zeta\in V$, $\zetaG\in V_\Gamma$ and $(\eta,\etaG) \in \CV_K$.
        
        \item \label{DEF:WS:REG:iii} The initial conditions are satisfied in the follwing sense:
        \begin{align*}
        	\phi(0)=\phi_0 \quad \aeO,
        	\quad 
        	\psi(0)=\psi_0 \quad \aeG.
        \end{align*}
        \item \label{DEF:WS:REG:iv} The weak energy dissipation law
        \begin{align}
            \label{DISSLAW:REG}
            &E_K\big(\phi(t),\psi(t)\big) 
            + 2\int_0^t\iO \nu(\phi)|\D\bv|^2
            + \int_0^t\iO \lambda(\phi) |\bv|^2
            + \int_0^t\iG \gamma(\psi) |\bv|^2
            \notag\\
            &\qquad 
            + \int_0^t\iO M_\Omega(\phi)|\nabla \mu|^2
	        + \int_0^t\iG M_\Gamma(\psi)|\nabla_\Gamma \theta|^2
            \notag\\[1ex]
            &\quad
            \le E_K(\phi_0,\psi_0)
        \end{align}
        holds for all $t\in[0,T]$. 
    \end{enumerate}
\end{definition}

\begin{remark}\label{REM:PRESS}
    We point out that the pressure $p$ does not appear in the weak formulation \eqref{wf:1} as the test functions are chosen to be divergence-free. However, provided that $\Omega$ is connected, the pressure can be reconstructed in the following way.
    Suppose that $(\bv, \phi, \mu, \psi, \theta)$ is a weak solution in the sense of Definition~\ref{DEF:WS:REG}. We define
    \begin{align*}
        \mathcal{F} : W_\nnn \to \R, \quad 
        \mathcal{F}(\Phi) &= \iO 2\nu(\phi) \D\bv : \D \nabla \Phi 
            + \iO \lambda(\phi) \bv \cdot \nabla \Phi
        	+ \iG  \gamma(\psi)\bv \cdot \nabla \Phi
        \\
        &\quad 
            + \iO \phi \nabla \mu \cdot \nabla \Phi
            + \iG \psi \nabla_\Gamma\th \cdot \nabla \Phi \,.
    \end{align*}
    Note that in the integrals over $\Gamma$, we can actually replace $\nabla \Phi$ by $\nabla_\Gamma \Phi$ as $\bv$ and $\nabla_\Gamma \theta$ are tangential vector fields, i.e., their normal component is zero.
    In view of the regularities in Definition~\ref{DEF:WS:REG}\ref{DEF:WS:REG:i}, it is straightforward to check that $\mathcal{F} \in W_\nnn^*$. Hence, according to \cite[Lemma~3.6.1]{AbelsHabil}, there exists a unique function $p\in L^2(\Omega)$ such that
    \begin{align}
        \label{wf:p}
        \iO p \, \Delta \Phi = \mathcal{F}(\Phi) 
        \quad\text{for all $\Phi \in W_\nnn$.}
    \end{align}
    Let now $\hat\w \in \VV_\nnn$ be arbitrary. As $\Omega$ is additionally assumed to be connected, there exists a Leray decomposition ${\hat\w =\w + \nabla\Phi}$ with $\w \in \Vs$ and $\Phi\in W_\nnn$ (see, e.g., \cite[Theorem~IV.3.5.]{boyer_book}). In particular, we thus have $\div \hat \w = \div(\nabla \Phi) = \Delta \Phi$. Hence, combining \eqref{wf:1} and \eqref{wf:p}, we conclude that the variational formulation
    \begin{align*}
        &2 \iO \nu(\phi) \D\bv : \D\hat\w 
            - \iO p \div \hat\w
            + \iO \lambda(\phi) \bv \cdot \hat\w 
            + \iG  \gamma(\psi)\bv \cdot \hat\w 
        \\
        &\quad =
            - \iO \phi \nabla \mu \cdot \hat\w
            - \iG \psi \nabla_\Gamma\th \cdot \hat\w        
    \end{align*}
    holds for all $\hat\w \in \VV_\nnn$. 
    We have thus reconstructed the pressure $p\in L^2(\Omega)$.
\end{remark}

\subsubsection{Existence of a weak solution in the case \texorpdfstring{$K>0$}{}}

We first show the existence of a weak solution to the Cahn--Hilliard--Brinkman system \eqref{CHB} in the case $K>0$.

\begin{theorem}
\label{THM:WEAK}
Let $K>0$ be arbitrary.
Suppose that \ref{ass:1:dom}--\ref{ass:4:viscosity} and \ref{ass:pot:reg:1} are fulfilled and let $(\phi_0,\psi_0)\in \CV_K$ be any initial data.
Then, the Cahn--Hilliard--Brinkman system \eqref{CHB} possesses at least one weak solution $(\bv, \phi, \mu, \psi, \theta)$ in the sense of Definition~\ref{DEF:WS:REG}, which further satisfies $(\mu,\theta)  \in \L4 \CH$.

Let us now assume that the domain $\Omega$ is of class $C^\ell$ with $\ell \in \{2,3\}$. If $d=3$, we further assume $p\le 4$, and if $\ell=3$, we further assume that \ref{ass:pot:reg:2} holds.
Then, we have the additional regularities
\begin{subequations}
\label{higher:reg:phi}
\begin{align}
    \label{reg:L4H2}
    (\phi,\psi) &\in
    L^4\big(0,T; H^2(\Omega) \times H^2(\Gamma) \big) \qquad \text{in case $\ell\in\{2,3\}$}, \\
    \label{reg:L2H3}
    (\phi,\psi) &\in
    L^2\big(0,T; H^3(\Omega) \times H^3(\Gamma) \big) \qquad \text{in case $\ell=3$},
\end{align}
\end{subequations}
and the equations \eqref{eq:4}, \eqref{eq:6} and \eqref{eq:7} are fulfilled in the strong sense, that is, almost everywhere in $Q$ and on $\Sigma$, respectively. In the case $\ell=3$, we further have
\begin{align}\label{CH1:reg}
    (\phi,\psi) \in C^0\big([0,T]; \CV \big).
\end{align}
\end{theorem}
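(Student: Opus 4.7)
The plan is to use a semi-Galerkin strategy in the spirit of \cite{Ebenbeck2019}, discretizing only the phase-field/chemical-potential subsystem while solving the Brinkman subsystem exactly on the continuous level. As a preliminary step, I would set up a Brinkman solver: given $(\phi,\mu,\psi,\theta)$ of sufficient regularity, the problem of finding $\bv\in\Vs$ satisfying \eqref{wf:1} is solved by Lax--Milgram, since the bilinear form on the left-hand side is coercive on $\Vs$ thanks to Korn's inequality (Lemma~\ref{LEM:KORN}). Under \ref{ass:4:viscosity:1}, coercivity follows from \ref{Korn1} together with the boundary term $\gamma_1\norm{\bv}_{\HHG}^2$; under \ref{ass:4:viscosity:2}, one instead uses the enhanced version \ref{Korn2}. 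In either case the linear functional on the right-hand side is controlled by the $\HH$-norm of $\phi\nabla\mu$ plus the $\HHG$-norm of $\psi\nabla_\Gamma\theta$, and one obtains a continuous solution operator.

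Next, as the case $K>0$ means $\CV_K=\CV$, I would pick a Galerkin basis $\{(e_j,e_{\Gamma,j})\}_{j\in\N}\subset\CV$, for instance given by eigenfunctions of the bulk-surface Laplacian with Robin coupling associated with the quadratic form defining $E_K$. Setting $(\phi^n,\psi^n)=\sum_{j=1}^n a_j^n(t)(e_j,e_{\Gamma,j})$, defining $(\mu^n,\theta^n)$ implicitly via the finite-dimensional version of \eqref{wf:4}, solving the continuous Brinkman problem to obtain $\bv^n$, and substituting into the discrete analogues of \eqref{wf:2}--\eqref{wf:3}, produces a system of ODEs for the coefficients $a^n$. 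Local existence follows from Cauchy--Peano; to extend globally one derives the a priori estimate by testing the two mass-balance equations with $(\mu^n,\theta^n)$, testing \eqref{wf:4} with $(\partial_t\phi^n,\partial_t\psi^n)$, testing \eqref{wf:1} with $\bv^n$ and adding. Cancellations between the convective terms $\iO\phi^n\bv^n\cdot\nabla\mu^n$, $\iG\psi^n\bv^n\cdot\nabla_\Gamma\theta^n$ and the capillary stresses on the right-hand side of \eqref{wf:1} reproduce, after integration in time, the discrete analogue of \eqref{DISSLAW:REG}.

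From this energy identity, together with the growth assumption \ref{ass:pot:reg:1}, one extracts uniform bounds: $(\phi^n,\psi^n)$ in $L^\infty(0,T;\CV)$, $\bv^n$ in $L^2(0,T;\Vs)$, $\nabla\mu^n$ in $L^2(0,T;\HH)$, $\nabla_\Gamma\theta^n$ in $L^2(0,T;\HHG)$, and also $F'(\phi^n)$ and $G'(\psi^n)$ in appropriate Lebesgue spaces. To upgrade to full $\CV$-bounds for $(\mu^n,\theta^n)$ one controls the generalized mean values by choosing $(\eta,\eta_\Gamma)=(1,1)$ in the discrete \eqref{wf:4}; the growth bounds on $F'$, $G'$, Sobolev embedding and the $L^\infty_tV$-control of $(\phi^n,\psi^n)$ then yield $L^2$ (and after interpolation with $\nabla$-estimates, $L^4_t\CH$) bounds on $(\mu^n,\theta^n)$. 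Pairing these with the mass-balance equations gives an estimate of $\partial_t(\phi^n,\psi^n)$ in $L^2(0,T;\CV^*)$; the term $\div(\phi\bv)$ is handled using H\"older with $\phi\in L^\infty_tL^6_x$ and $\bv\in L^2_tL^6_x$, and similarly on $\Gamma$. A standard Aubin--Lions argument then provides strong convergence of $(\phi^n,\psi^n)$ in $C^0([0,T];\CH)$, which suffices to pass to the limit in all the nonlinearities (the Brinkman solver is continuous with respect to these data). The weak lower semicontinuity of the dissipation terms yields \eqref{DISSLAW:REG}, and the initial conditions are inherited from the Galerkin projection.

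For the higher regularity under $C^\ell$ smoothness of $\Omega$, once a weak solution is in hand, equations \eqref{eq:4}, \eqref{eq:6}, \eqref{eq:7} form, for each fixed $t$, a stationary elliptic bulk-surface problem for $(\phi,\psi)$ with data $(\mu-F'(\phi),\theta-G'(\psi))$ and Robin coupling $K\partial_\nnn\phi=\psi-\phi$. Bootstrap via the bulk-surface elliptic regularity theory used in \cite{Knopf2020,Garcke2022} upgrades $(\phi,\psi)$ to $H^2\times H^2$ or $H^3\times H^3$ according to $\ell$, and the $L^4_tH^2$/$L^2_tH^3$ integrability follows by combining these elliptic estimates with the $L^4_t\CH$/$L^2_t\CV$ bounds on $(\mu,\theta)$; the continuity $C^0([0,T];\CV)$ for $\ell=3$ then comes from the improved regularity of $\partial_t(\phi,\psi)$ via interpolation. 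The main obstacle I expect is the coupled nature of the energy estimate: the capillary force on the right-hand side of \eqref{wf:1} couples $\bv$ with $\mu\nabla\phi$ and $\theta\nabla_\Gamma\psi$, and it is only thanks to the algebraic cancellation with the convective terms in \eqref{wf:2}--\eqref{wf:3} that one closes the estimate without loss. A second delicate point is controlling the boundary capillary term $\iG\psi\nabla_\Gamma\theta\cdot\w$ uniformly, which requires the $L^4_t L^2$ bound on $\theta$ together with the trace of $\psi\in L^\infty_tV$ on $\Gamma$.
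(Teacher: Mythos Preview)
Your overall strategy matches the paper's: semi-Galerkin discretization of the Cahn--Hilliard part with the Brinkman system solved exactly via Lax--Milgram and Korn's inequality, energy identity for the uniform bounds, Aubin--Lions for compactness, and bulk-surface elliptic regularity (from \cite{knopf-liu}) for the $H^2$/$H^3$ estimates. The paper's one substantive difference is the Galerkin basis: rather than the coupled Robin bulk-surface eigenfunctions you propose, it uses a \emph{product} basis, taking Neumann eigenfunctions $\{u_i\}$ in $\Omega$ and Laplace--Beltrami eigenfunctions $\{v_j\}$ on $\Gamma$ independently, so that $\phi_k,\mu_k$ and $\psi_k,\theta_k$ carry separate coefficients and \eqref{wf:2}, \eqref{wf:3} are discretized as \emph{separate} equations.

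This matters at exactly the step you flag. With your coupled basis only the pair $(1,1)$ lies in the Galerkin space, so testing \eqref{wf:4} with $(1,1)$ controls merely $\int_\Omega\mu^n+\int_\Gamma\theta^n$; this is \emph{not} enough to bound $\norma{\mu^n}_H+\norma{\theta^n}_{\HG}$ from the gradient estimates, since $\mu^n$ and $\theta^n$ are otherwise uncoupled (constant pairs $(a,-a|\Omega|/|\Gamma|)$ have zero combined mean but arbitrary norm). The paper sidesteps mean values entirely by a duality argument: it bounds $\norma{(\mu_k,\theta_k)}_{\CV^*}$ uniformly in time directly from \eqref{wf:4} (using the growth of $F',G'$ and the $L^\infty_t\CV$ bound on $(\phi_k,\psi_k)$), then writes $\norma{(\mu_k,\theta_k)}_\CH^2=\langle(\mu_k,\theta_k),(\mu_k,\theta_k)\rangle_\CV\le C\norma{(\mu_k,\theta_k)}_{\CV^*}\norma{(\mu_k,\theta_k)}_\CV$, absorbs the $\CH$-part, and combines with the $L^2_t$ gradient bound to obtain $L^4_t\CH$. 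This argument works verbatim with your basis and should replace the mean-value step. One further small point: the continuity $(\phi,\psi)\in C^0([0,T];\CV)$ for $\ell=3$ is obtained in the paper not by generic interpolation but via a bulk-surface Lions--Magenes chain rule (Proposition~\ref{PROP:A}\ref{CR2}), using $(\phi,\psi)\in L^2_t(H^3(\Omega)\times H^3(\Gamma))$ together with $\partial_t(\phi,\psi)\in L^2_t\CV^*$.
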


\subsubsection{The limit \texorpdfstring{$K\to0$}{} and existence of a weak solution in the case \texorpdfstring{$K=0$}{}}

We now investigate the limit $K \to 0$ in which the boundary condition \eqref{eq:7} formally tends to the Dirichlet condition $\psi= \phi\vert_{\Gamma}$ almost everywhere on $\Sigma$.
In the following theorem, we send $K \to 0$ in system \eqref{CHB} to prove the existence of a weak solution to \eqref{CHB} in the case $K=0$, and we further specify the convergence properties of this asymptotic limit.

\begin{theorem}
\label{THM:Kto0}
Suppose that \ref{ass:1:dom}--\ref{ass:4:viscosity} and \ref{ass:pot:reg:1} are fulfilled and let $(\phi_0,\psi_0)\in \CV_0$ be any initial data.
Let $(\Kn)_{n\in\N}$ be a sequence of positive real numbers such that $\Kn\to 0$ as $n\to \infty$.
For any $n\in\N$, let $(\bv^\Kn, \phi^\Kn, \mu^\Kn, \psi^\Kn, \theta^\Kn)$ denote any weak solution corresponding to $\Kn>0$ in the sense of Definition~\ref{DEF:WS:REG}.
Then, there exists a quintuplet of functions $(\bv^0, \phi^0,  \mu^0, \psi^0,  \theta^0)$ with $\phi^0\vert_\Gamma = \psi^0$ {\aeS} such that for any $s\in[0,1)$,
\begin{subequations}
\label{CONV:K}
\begin{alignat}{2}
    \label{CONV:V:K}
    \bv^\Kn &\to \bv^0
    &&\quad\text{weakly in $\L2 \Vs$},
    \\
    \label{CONV:V:K:bd}
    \bv^\Kn|_\Gamma &\to \bv^0|_\Gamma
    &&\quad\text{weakly in $\L2 \HHG$},
    \\
    \label{CONV:PHI:K}
    \phi^\Kn &\to \phi^0
    &&\quad\text{weakly-$^*$ in $\L\infty{V}$, weakly in $\H1 {V^*}$}, \notag\\
    &&&\qquad\text{strongly in $C^0([0,T];H^s(\Omega))$, and \aeQ},
    \\
    \label{CONV:PSI:K}
    \psi^\Kn &\to \psi^0
    &&\quad\text{weakly-$^*$ in $\L\infty {\VG}$, weakly in $\H1 {\VG^*}$}, \notag\\
    &&&\qquad\text{strongly in $C^0([0,T];H^s(\Gamma))$, and \aeS},
    \\
    \label{CONV:MU:K}
    \mu^\Kn &\to \mu^0
    &&\quad\text{weakly in $\L2 V $},
    \\
    \label{CONV:THETA:K}
    \theta^\Kn &\to \theta^0
    &&\quad\text{weakly in $\L2 \VG $},
    \\
    \label{CONV:PP:K}
    \phi^\Kn\vert_{\Gamma} - \psi^\Kn &\to 0
    &&\quad\text{strongly in $\L\infty \HG$, and \aeS}, 
\end{alignat}
\end{subequations}
as $n\to\infty$ along a non-relabeled subsequence.
Moreover, the limit $(\bv^0, \phi^0, \mu^0, \psi^0, \theta^0)$ is a weak solution of the Cahn--Hilliard--Brinkman model \eqref{CHB} in the sense of Definition~\ref{DEF:WS:REG} with $K=0$. 

Let us now assume that the domain $\Omega$ is of class $C^\ell$ with $\ell \in \{2,3\}$. If $d=3$, we further assume $p\le 4$, and if $\ell=3$, we further assume that \ref{ass:pot:reg:2} holds. Then, we have the additional regularities
\begin{subequations}
\label{higher:reg:phi:0}
\begin{align}
    \label{reg:L4H2:0}
    (\phi^0,\psi^0) &\in
    L^2\big(0,T; H^2(\Omega) \times H^2(\Gamma) \big) \qquad \text{in case $\ell\in\{2,3\}$}, \\
    \label{reg:L2H3:0}
    (\phi^0,\psi^0) &\in
    L^2\big(0,T; H^3(\Omega) \times H^3(\Gamma) \big) \qquad \text{in case $\ell=3$},
\end{align}
\end{subequations}
and the equations \eqref{eq:4} and \eqref{eq:6} are fulfilled in the strong sense. Moreover, in the case $\ell=3$, we further have
\begin{align}
    \label{CH1:reg*}
    (\phi^0,\psi^0) &\in C^0\big([0,T]; \CV_0 \big) \cap L^4\big(0,T; H^2(\Omega) \times H^2(\Gamma) \big), 
    \\
    \label{CH1:reg*:2}
    (\mu^0,\theta^0) & \in L^4\big(0,T; \CH \big).
\end{align}
\end{theorem}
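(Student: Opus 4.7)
The strategy is to start from the given sequence of weak solutions $(\bv^{\Kn},\phi^{\Kn},\mu^{\Kn},\psi^{\Kn},\theta^{\Kn})$, derive uniform-in-$n$ bounds, extract a convergent subsequence, and then pass to the limit in the weak formulation \eqref{wf}. The essential observation is that the initial datum $(\phi_0,\psi_0)\in \CV_0$ satisfies $\psi_0=\phi_0\vert_\Gamma$ a.e.\ on $\Gamma$, so the penalty contribution $\tfrac{\sigma(\Kn)}{2}\iG(\psi_0-\phi_0)^2$ to $E_{\Kn}(\phi_0,\psi_0)$ vanishes for every $n$. Consequently the initial energy is bounded uniformly, and by \eqref{DISSLAW:REG} so are all the dissipation quantities on the left-hand side.

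From \eqref{DISSLAW:REG} I collect uniform bounds on $\D\bv^{\Kn}$ in $L^2(0,T;\BH)$, on $\gamma(\psi^{\Kn})^{1/2}\bv^{\Kn}$ in $L^2(0,T;\HHG)$, on $(\phi^{\Kn},\psi^{\Kn})$ in $L^\infty(0,T;\CV)$, on $(\nabla\mu^{\Kn},\nabla_\Gamma\theta^{\Kn})$ in the corresponding $L^2$ spaces, and, crucially, on $\sigma(\Kn)^{1/2}(\psi^{\Kn}-\phi^{\Kn})$ in $L^\infty(0,T;\HG)$. The latter yields $\|\psi^{\Kn}-\phi^{\Kn}\|_{L^\infty(0,T;\HG)}\le C\,\Kn^{1/2}\to 0$, giving \eqref{CONV:PP:K} and, in the limit, the identification $\phi^0\vert_\Gamma=\psi^0$ a.e.\ on $\Sigma$. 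Depending on whether \ref{ass:4:viscosity:1} or \ref{ass:4:viscosity:2} holds, Lemma~\ref{LEM:KORN} upgrades the symmetric-gradient control to a uniform bound of $\bv^{\Kn}$ in $L^2(0,T;\Vs)$. To bound $(\mu^{\Kn},\theta^{\Kn})$ in $L^2(0,T;\CV)$ I test \eqref{wf:4} with $(\eta,\etaG)=(1,1)\in\CV_{\Kn}$ and exploit the growth conditions \eqref{Ass:F'}--\eqref{Ass:G'} and the bulk-surface Sobolev embeddings to control the generalized means $\meano{\mu^{\Kn}}$ and $\meang{\theta^{\Kn}}$; Poincaré-type inequalities, including Lemma~\ref{LEM:POIN}, then complete the estimate. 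Equations \eqref{wf:2}--\eqref{wf:3} also provide uniform control of $(\delt\phi^{\Kn},\delt\psi^{\Kn})$ in $L^2(0,T;\CV^*)$, so the Aubin--Lions--Simon lemma delivers the strong convergences claimed in \eqref{CONV:PHI:K} and \eqref{CONV:PSI:K}.

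Passing to the limit in \eqref{wf:1}--\eqref{wf:4} along a non-relabeled subsequence is then routine: weak convergences handle the linear terms, while the strong convergences of $\phi^{\Kn},\psi^{\Kn}$ in $C^0([0,T];H^s)$ for $s<1$, combined with the continuity and boundedness of $\nu,\lambda,\gamma,M_\Omega,M_\Gamma$ and the polynomial growth of $F',G'$, allow the nonlinear terms to be handled by dominated convergence. The only delicate issue is the penalty contribution in \eqref{wf:4}, since $\sigma(\Kn)\to\infty$: this is resolved by restricting the test space to $\CV_0\subset\CV$, where $\etaG=\eta\vert_\Gamma$, so that $\sigma(\Kn)\iG(\psi^{\Kn}-\phi^{\Kn})(\etaG-\eta)$ vanishes identically in $n$. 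The initial conditions survive the limit thanks to the continuity in time, and the weak energy dissipation law for $K=0$ is obtained from lower semicontinuity together with $E_{\Kn}(\phi_0,\psi_0)\to E_0(\phi_0,\psi_0)$. The main technical hurdle is precisely this $\Kn$-independence: every occurrence of the boundary mismatch $\psi^{\Kn}-\phi^{\Kn}$ must be paired with a factor $\sigma(\Kn)^{-1/2}=\Kn^{1/2}$, and the choice of test space $\CV_0$ is what makes this compatibility automatic in the limit.

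The higher regularity assertions in \eqref{higher:reg:phi:0}, together with the continuity statements \eqref{CH1:reg*}--\eqref{CH1:reg*:2}, are proved a posteriori on the limit system. With $K=0$ the coupling \eqref{eq:7} degenerates into the trace relation $\psi^0=\phi^0\vert_\Gamma$, and the pair $(\phi^0,\psi^0)$ satisfies an elliptic bulk-surface system with right-hand sides $\mu^0-F'(\phi^0)$ in $\Omega$ and $\theta^0-G'(\psi^0)$ on $\Gamma$, of exactly the type treated in Theorem~\ref{THM:WEAK} for $K>0$. I therefore apply the elliptic regularity argument used in Subsection~\ref{SUBSEC:EXREG} together with Lemma~\ref{LEM:INT} to interpolate the $L^\infty(0,T;\CV_0)$ control against the quadratic terms, producing \eqref{reg:L4H2:0} and, under $C^3$-regularity of $\Omega$ and \ref{ass:pot:reg:2}, also \eqref{reg:L2H3:0}. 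The $L^4$-integrability in \eqref{CH1:reg*} and \eqref{CH1:reg*:2} follows by the same interpolation scheme as in Theorem~\ref{THM:WEAK}, and the continuity into $\CV_0$ is obtained from the Lions--Magenes-type embedding $L^\infty(0,T;\CV_0)\cap H^1(0,T;\CV^*)\hookrightarrow C^0_w([0,T];\CV_0)$ upgraded via the norm-continuity granted by the energy identity.
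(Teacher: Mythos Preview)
Your overall strategy matches the paper's closely, but there is one genuine gap in the uniform estimates. Testing \eqref{wf:4} with $(\eta,\etaG)=(1,1)$ only yields a bound on the \emph{sum} $|\Omega|\,\meano{\mu^{\Kn}}+|\Gamma|\,\meang{\theta^{\Kn}}$, not on the two means individually. Since $\mu^{\Kn}$ and $\theta^{\Kn}$ live on different manifolds and are only loosely coupled, a bound on the sum does not give a bound on each (take $\mu\equiv c$, $\theta\equiv -c|\Omega|/|\Gamma|$). Testing instead with $(1,0)$ or $(0,1)$ fails because the penalty term $\tfrac{1}{\Kn}\iG(\psi^{\Kn}-\phi^{\Kn})(\etaG-\eta)$ is then of order $\Kn^{-1/2}$. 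The paper resolves this by first testing with $(\eta,0)$ for $\eta\in C^\infty_c(\Omega)$, so that $\eta|_\Gamma=0$ and the penalty term vanishes; this controls $\int_\Omega\mu^{\Kn}\eta$ independently of $\Kn$ and, via a generalized Poincar\'e inequality, gives $\norma{\mu^{\Kn}}_{\L2 V}\le C$. Only then does it test with $(1,0)$ to bound $\big|\tfrac{1}{\Kn}\iG(\psi^{\Kn}-\phi^{\Kn})\big|$ in terms of $\norma{\mu^{\Kn}}_H$, and finally with $(0,1)$ to control $\meang{\theta^{\Kn}}$. This bootstrapping is essential and is missing from your sketch.

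A secondary issue: for the continuity $(\phi^0,\psi^0)\in C^0([0,T];\CV_0)$ you appeal to ``norm-continuity granted by the energy identity'', but the weak solution only satisfies the energy dissipation \emph{inequality} \eqref{DISSLAW:REG}, not an identity, so this upgrade from weak to strong continuity is not immediate. The paper instead invokes the chain-rule result Proposition~\ref{PROP:A}\ref{CR2}, which uses the $L^2(0,T;H^3\times H^3)$ regularity directly.
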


\subsubsection{Stability and uniqueness of the weak solution in the general case \texorpdfstring{$K\ge0$}{}}

In the case of regular potentials, constant mobilities, and a constant viscosity, we are able to prove the uniqueness of the weak solutions established in Theorem~\ref{THM:WEAK} provided that the following assumption on the potentials $F$ and $G$ holds.
\begin{enumerate}[label={\bf (R*)}, ref={\bf (R*)}]
	\item  \label{ass:pot:reg*}
	The potentials $F:\R\to[0,\infty)$ and $G:\R\to[0,\infty)$ are three times continuously differentiable, 
	and there exist exponents $p,q\in\R$ with
    \begin{align}
    \label{EXP*}
        p\in \begin{cases}
            [3,\infty) &\text{if $d=2$,}\\
            [3,4] &\text{if $d=3$,}
        \end{cases}
        \qquad\text{and}\qquad
        q\in[3,\infty),
    \end{align}%
    as well as constants $c_{F^{(3)}}, c_{G^{(3)}} \ge 0$ such that the third-order derivatives satisfy
    \begin{align}
    \label{Ass:F'''}
    \abs{F^{(3)}(r)} &\le
        c_{F^{(3)}}(1+\abs{r}^{p-3}),
    \\
    \label{Ass:G'''}
    \abs{G^{(3)}(r)} &\le  
    c_{G^{(3)}}(1+\abs{r}^{q-3})
    \end{align}
    for all $r\in\R$.
\end{enumerate}
We point out that \ref{ass:pot:reg*} implies \ref{ass:pot:reg:1} and \ref{ass:pot:reg:2} with $p$ and $q$ being chosen as in \eqref{EXP*}. 

\medskip

\begin{theorem}
    \label{THM:UQ}
    Suppose that \ref{ass:1:dom} and \ref{ass:pot:reg*} are fulfilled with $\Omega$ being of class $C^3$, and let $K\ge 0$ be arbitrary.  
    In addition to \ref{ass:3:mobility} and \ref{ass:4:viscosity}, we further assume that $\gamma$ and $\lambda$ are Lipschitz continuous and that the functions $\nu$, $M_\Omega$ and $M_\Gamma$ reduce to positive constants denoted by the same symbols.
    For any $i\in\{1,2\}$, let $(\phi_{0,i},\psi_{0,i}) \in \CV_K$ be any pair of initial data, and let $(\bv_i,\phi_i, \mu_i, \psi_i, \theta_i)$ be a corresponding weak solution in the sense of Definition~\ref{DEF:WS:REG}. 
    Then, the stability estimate
    \begin{align} 
        & \non \norma{\bv_1-\bv_2}_{\L2 {\VV}}
        + \norma{\phi_1-\phi_2}_{\L\infty V}
        + \norma{\mu_1-\mu_2}_{\L2 V}
        + \norma{\psi_1-\psi_2}_{\L\infty \VG}
        \\ & \quad 
        + \norma{\th_1-\th_2}_{\L2 \VG}
        \leq C_S \big(\norma{\phi_{0,1}-\phi_{0,2}}_V + \norma{\psi_{0,1}-\psi_{0,2}}_{\VG} \big)
        \label{est:CD}
    \end{align}
    holds for a constant $C_S\ge 0$ depending only on $K$, $\Omega$, $T$, the initial data and the constants introduced in \ref{ass:1:dom}--\ref{ass:4:viscosity} and \ref{ass:pot:reg*}. In particular, choosing $(\phi_{0,1},\psi_{0,1})=(\phi_{0,2},\psi_{0,2})$, this entails the uniqueness of the corresponding weak solution.
\end{theorem}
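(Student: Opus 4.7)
The plan is to derive a differential inequality for the difference of the two weak solutions and close it by Gronwall's lemma. Set $\bv := \bv_1-\bv_2$, $\phi := \phi_1-\phi_2$, $\mu := \mu_1-\mu_2$, $\psi := \psi_1-\psi_2$, $\theta := \theta_1-\theta_2$. Subtracting the variational identities in Definition~\ref{DEF:WS:REG} for $i=1,2$ and splitting the nonlinearities by, e.g., $\mu_1\nabla\phi_1-\mu_2\nabla\phi_2 = \mu\nabla\phi_1 + \mu_2\nabla\phi$ (and analogously on $\Gamma$), I obtain a difference system in which $\nu$, $M_\Omega$, $M_\Gamma$ appear as constants; the Lipschitz residuals $\lambda(\phi_1)-\lambda(\phi_2)$ and $\gamma(\psi_1)-\gamma(\psi_2)$ factor through $\phi$ or $\psi$; and the potential residuals $F'(\phi_1)-F'(\phi_2)$, $G'(\psi_1)-G'(\psi_2)$ can be written as $\phi\int_0^1 F''(\phi_2+\tau\phi)\,d\tau$ and $\psi\int_0^1 G''(\psi_2+\tau\psi)\,d\tau$, with integrands controlled by \ref{ass:pot:reg*} and by the $L^\infty$-bound on the background solutions coming from the higher regularity $L^4(0,T;H^2(\Omega)\times H^2(\Gamma))$ supplied by Theorems~\ref{THM:WEAK}--\ref{THM:Kto0} under $\Omega\in C^3$.

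First I would test the difference of the Brinkman equation \eqref{wf:1} with $\w = \bv\in\Vs$ and, simultaneously, the differences of \eqref{wf:2} and \eqref{wf:3} with $\zeta = \mu\in V$ and $\zeta_\Gamma = \theta\in\VG$. Summing the three identities, the coupling terms built with $\phi_1$ and $\psi_1$ cancel exactly and there remains
\[
\begin{aligned}
&2\nu\|\D\bv\|_\BH^2 + \iO\lambda(\phi_1)|\bv|^2 + \iG\gamma(\psi_1)|\bv|^2 + M_\Omega\|\nabla\mu\|_\HH^2 + M_\Gamma\|\nabla_\Gamma\theta\|_\HHG^2 \\
&\qquad + \<\partial_t\phi,\mu>_V + \<\partial_t\psi,\theta>_{\VG} = \mathcal{R}(t),
\end{aligned}
\]
where $\mathcal{R}(t)$ collects only the perturbation terms $\iO\phi\,\bv_2\cdot\nabla\mu - \iO\phi\,\nabla\mu_2\cdot\bv$, their surface analogues, and the Lipschitz remainders from $\lambda,\gamma$. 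Korn's inequality in the form of Lemma~\ref{LEM:KORN} (either \ref{Korn1} combined with the strict lower bound $\gamma_1>0$ from \ref{ass:4:viscosity:1}, or the sharper \ref{Korn2} under \ref{ass:4:viscosity:2}) shows that the left-hand side dominates $c_0\|\bv\|_\VV^2$ in addition to the two dissipation terms for $\mu$ and $\theta$.

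The crucial step is to convert the duality pairing $\<\partial_t\phi,\mu> + \<\partial_t\psi,\theta>$ into a genuine time derivative. This is where the bulk-surface chain rule of Proposition~\ref{PROP:A} in the Appendix enters: thanks to the higher regularity $(\phi_i,\psi_i)\in C^0([0,T];\CV_K)\cap L^4(0,T;H^2(\Omega)\times H^2(\Gamma))$ and $(\mu_i,\theta_i)\in L^2(0,T;\CV)\cap L^4(0,T;\CH)$ granted by Theorems~\ref{THM:WEAK}--\ref{THM:Kto0}, Proposition~\ref{PROP:A} combined with the difference version of \eqref{wf:4} yields
\[
\<\partial_t\phi,\mu> + \<\partial_t\psi,\theta> = \frac{d}{dt}\Lambda_K(\phi,\psi) + \mathcal{P}(t),
\]
with $\Lambda_K(\phi,\psi) := \tfrac12\|\nabla\phi\|_\HH^2 + \tfrac12\|\nabla_\Gamma\psi\|_\HHG^2 + \tfrac{\sigma(K)}{2}\|\psi-\phi\|_\HG^2$ and $\mathcal{P}(t) = \iO[F'(\phi_1)-F'(\phi_2)]\partial_t\phi + \iG[G'(\psi_1)-G'(\psi_2)]\partial_t\psi$. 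The remainder $\mathcal{P}(t)$ is closed using the factorization of the potential residuals together with the equation-based bound $(\partial_t\phi,\partial_t\psi)\in L^2(0,T;\CV^*)$ read off from \eqref{wf:2}--\eqref{wf:3}.

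The main obstacle, and precisely the point that forces the introduction of the chain rule of Proposition~\ref{PROP:A}, is the simultaneous treatment of all $K\ge 0$: for $K>0$ the pairing has a clean Robin-type interpretation, but for $K=0$ the test-function space in \eqref{wf:4} degenerates to $\CV_0$ (with the trace identification $\eta_\Gamma=\eta\vert_\Gamma$), so a direct testing of \eqref{wf:4} with $(\partial_t\phi,\partial_t\psi)$ is precluded, and the chain rule is needed precisely to bypass this constraint uniformly in $K$. Once this step is in place, the residuals $\mathcal{R}(t)+\mathcal{P}(t)$ are handled for both $d=2$ and $d=3$ by Gagliardo--Nirenberg and H\"older: for instance,
\[
\Big|\iO\phi\,\bv_2\cdot\nabla\mu\Big| \le \|\phi\|_{L^3(\Omega)}\|\bv_2\|_{L^6(\Omega)}\|\nabla\mu\|_\HH \lesssim \|\phi\|_H^{1/2}\|\phi\|_V^{1/2}\|\bv_2\|_\VV\|\nabla\mu\|_\HH,
\]
so Young's inequality gives $\delta\|\nabla\mu\|_\HH^2+\kappa(t)\|\phi\|_V^2$ with $\kappa(t)=C_\delta\|\bv_2(t)\|_\VV^2\in L^1(0,T)$; it is here that the absence of a material derivative in the Brinkman equation keeps the argument closable in three dimensions, in contrast to the 2D restriction of \cite{Giorgini-Knopf}. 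Finally, testing the difference of \eqref{wf:4} with $(\eta,\eta_\Gamma)=(\phi,\psi)\in\CV_K$ (admissible for all $K\ge 0$ because $\psi_i=\phi_i\vert_\Gamma$ when $K=0$) converts control of $\Lambda_K$ plus the conserved bulk/surface mean values (bounded by $\|\phi_{0,1}-\phi_{0,2}\|_V+\|\psi_{0,1}-\psi_{0,2}\|_\VG$) into control of $\|\phi\|_V^2+\|\psi\|_\VG^2$ via Poincar\'e. Choosing $\delta$ small enough to absorb the velocity and chemical-potential squares into the left-hand side and applying Gronwall's lemma produces the stability estimate \eqref{est:CD}; uniqueness follows by taking equal initial data.
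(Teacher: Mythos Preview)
Your strategy coincides with the paper's: derive a difference energy identity, invoke Proposition~\ref{PROP:A}\ref{CR2} to rewrite $\<\partial_t\phi,\mu>_V+\<\partial_t\psi,\theta>_{\VG}$ as $\tfrac{d}{dt}\Lambda_K$ plus potential residuals, bound $\norma{\partial_t\phi}_{V^*}$ and $\norma{\partial_t\psi}_{\VG^*}$ pointwise from the difference of \eqref{wf:2}--\eqref{wf:3}, and close by Gronwall. Your decomposition (arranging the bilinear terms so that the $\phi_1$- and $\psi_1$-contributions cancel between the Brinkman and convective identities) differs only cosmetically from the paper's choice of testing \eqref{wf:2}--\eqref{wf:3} with $\phi+\mu$ and $\psi+\theta$; the resulting residuals are equivalent.

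The one soft spot is your treatment of $\mathcal P(t)$. You write it as an integral $\iO[F'(\phi_1)-F'(\phi_2)]\,\partial_t\phi$, but $\partial_t\phi$ lives only in $V^*$, so this has to be read as the duality $\<\partial_t\phi,\,F'(\phi_1)-F'(\phi_2)>_V$, which forces control of $\norma{F'(\phi_1)-F'(\phi_2)}_V$, not merely the $H$-norm that your $F''$-factorization delivers. The gradient of the residual produces the extra term $(F''(\phi_1)-F''(\phi_2))\nabla\phi_2$, and this is precisely where the third-derivative bound in \ref{ass:pot:reg*} enters. Showing that the resulting time-dependent coefficient lies in $L^1(0,T)$ is the technical heart of the paper's proof: in $d=3$ it requires not only the $L^4(0,T;H^2)$ regularity you cite but also the full $L^2(0,T;H^3)$ bound from Theorems~\ref{THM:WEAK}--\ref{THM:Kto0} (used to interpolate, e.g., $\phi_i\in L^{16}(0,T;L^{12}(\Omega))$), together with an Agmon-type estimate for $\norma{\phi_1}_{L^\infty(\Omega)}^{2(p-2)}$. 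Once this step is made precise, your sketch goes through exactly as in the paper.
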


\subsection{The Cahn--Hilliard--Brinkman system with singular potentials}

We now consider the system \eqref{CHB} for a general class of singular potentials. For those, we manage to establish just existence of weak solutions due to the lower regularity at disposal. Recall that $\epsilon=\epsilon_\Gamma=1$ as mentioned above.

\subsubsection{Assumptions for singular potentials}

For the potentials $F$ and $G$, we now make the following assumptions.

\begin{enumerate}[label={\bf (S\arabic*)}, ref={\bf (S\arabic*)}]
	\item \label{ass:1:pot}
	The potentials $F$ and $G$ can be decomposed as $F=\hat \b + \hat \pi$ and $G={\hat \b}_\Gamma + {\hat \pi}_\Gamma$.
 
	Here, $\hat \b,{\hat \b}_\Gamma: \R \to [0,  \infty]$ are lower semicontinuous and convex functions with $\hat \b(0)=0$ and 
    ${\hat \b}_\Gamma(0)=0$.
	For brevity, we define 
	\begin{align*}
	    \b := \partial \hat \b
	    \quad\text{and}\quad
	    \b_\Gamma := \partial {\hat \b_\Gamma},
	\end{align*}
    where $\partial$ indicates the subdifferential of the respective function.
	Moreover, we suppose that  $\hat \pi,{\hat \pi}_\Gamma \in C^1(\R)$ with Lipschitz continuous derivatives $\pi:=\hat \pi'$ and $\pi_\Gamma:={\hat \pi}_\Gamma'$.

    We point out that $\beta$ and $\betaG$ are maximal monotone graphs in $\R \times \R$ whose effective domains are denoted by $D(\b)$ and $D(\b_\Gamma)$, respectively.
	In particular, as $0$ is a minimum point of both $\hat \b$ and ${\hat \b}_\Gamma$, it turns out that $0 \in \b(0)$ and $0 \in {\b_\Gamma}(0)$.
    Finally, we denote by $\beta^\circ$ the \textit{minimal section} of the graph $\beta$, which is defined as
    \begin{align*}
        \beta^\circ(r) \coloneqq \Big\{ r^\ast \in D(\beta) : \abs{r^\ast} = \min_{s\in \beta(r)} \abs{s} \Big\}
        \qquad\text{for all $r\in D(\beta)$}
    \end{align*}
    (see, e.g., \cite{brezis}).
    The same definition applies to $\beta^\circ_\Gamma$ for~$\betaG$.

	\item  \label{ass:2:pot:dominance}
	We also assume the growth condition 
    \begin{align}   
    \lim_{r \to +\infty} \frac{\hat \b(r)}{|r|^{2}} = +\infty \,. \label{pier2} 
    \end{align}
	Moreover, we demand $D(\b_\Gamma) \subseteq D(\b)$, and postulate that the boundary graph dominates the bulk graph in the following sense:
    \begin{align}
    	\label{domination}
    	\exists \, \kappa_1, \kappa_2 >0: \quad |\b^\circ(r)| \leq \kappa_1 |\b_\Gamma^\circ(r)| + \kappa_2
    	\quad \text{for every $r \in D(\b_\Gamma)$.}
    \end{align}
    Here, $\b^\circ $ and $\b_\Gamma^\circ$ are the minimal sections introduced in \ref{ass:1:pot}.
\end{enumerate}

Note that all the examples of potentials given in \eqref{DEF:F:LOG}--\eqref{DEF:F:DOB} fulfill the assumptions \ref{ass:1:pot} and \ref{ass:2:pot:dominance}, provided that the boundary potential dominates the one in the bulk as demanded in \eqref{domination}.
In particular, the only scenario where a singular and a regular potential may coexist is the case in which the boundary potential is the singular one.
This assumption has first been made in \cite{CaCo} and was used afterwards in several contributions in the literature, see, e.g., \cite{CF6, CGS_dom, CGS2018a,CGS2018b,CGS2018c, CGS2017, Colli2022, Colli2022a, Colli2020, Colli2015}. However, in some other works such as \cite{GMS2009,GMS2010} different compatibility conditions were assumed.

\subsubsection{Definition of  weak solutions for singular potentials}

\begin{definition}
    \label{DEF:WS:SING}
    Let $K\ge 0$ be arbitrary.
    Suppose that \ref{ass:1:dom}--\ref{ass:4:viscosity}, \ref{ass:1:pot} and \ref{ass:2:pot:dominance} are fulfilled and let $(\phi_0,\psi_0)\in \CV_K$ be any initial data satisfying
    \begin{subequations}
    \label{initial:data:weak}
    \begin{alignat}{2}
	\label{initial:data:weak:omega}
	&\hat\b(\phi_0) &\in \Lx1,
	\quad 
	&\mz:=\<\phi_0>_\Omega \in {\rm int}( D(\b)),
	\\ 	
	\label{initial:data:weak:gamma}
	&\hat\b_\Gamma(\psi_0) &\in L^1(\Gamma),
	\quad 
	&\mgz:=\<\psi_0>_\Gamma\in {\rm int}( D(\b_\Gamma)).
    \end{alignat}
    \end{subequations}
    Then, $(\bv, \phi, \xi, \mu, \psi, \xi_\Gamma, \theta)$ is called a weak solution of the Cahn--Hilliard--Brinkman system \eqref{CHB} if
    the following conditions are fulfilled:
    \begin{enumerate}[label=\textnormal{(\roman*)}, ref=\textnormal{(\roman*)}]
        \item The functions $\bv$, $\phi$, $\xi$, $\mu$, $\psi$, $\xi_\Gamma$ and $\theta$ have the regularity
        \begin{alignat*}{3}
        	\bv & \in \L2 {\Vs} ,
         \quad \bv|_\Gamma \in \L2 {\HH_\Gamma},
        	\\
         	(\phi,\psi) & \in \H1 {\CV^*} \cap \C0 {{\CH}}\cap \L\infty {\CV_K},
        	\quad 
            \\ 	
            (\xi,\xi_\Gamma) &\in \L2 \CH,
            \\
            (\mu,\theta) & \in  \L2 \CV .
        \end{alignat*}
        
        \item The variational formulation 
        \begin{subequations}
        \label{wf*}
        \begin{align}
            \label{wf:1*}
        	& \begin{aligned}
        	& 2 \iO \nu(\phi) \D\bv : \D\w 
        	+ \iO \lambda(\phi) \bv \cdot \w 
        	+ \iG  \gamma(\psi)\bv \cdot \w 
            \\
            &\quad =
            - \iO \phi \nabla \mu \cdot \w
            - \iG \psi \nabla_\Gamma\th \cdot \w,
        	\end{aligned}
             \\
            \label{wf:2*}
            & \<\delt \phi, \zeta>_{V}
             - \iO \phi \bv \cdot \nabla \zeta
             + \iO M_\Omega(\phi)\nabla \mu \cdot \nabla \zeta 
             =0 ,
             \\
            \label{wf:3*}
            & \<\delt \psi, \zetaG>_{\VG}
             - \iG \psi \bv \cdot \nabla_\Gamma \zetaG
             + \iG M_\Gamma(\psi)\nabla_\Gamma \theta \cdot \nabla_\Gamma \zetaG
             =0,
             \\ 
            \label{wf:4*}
            &\begin{aligned}
            \iO \mu \eta 
            + \iG \theta \etaG
            &= \iO \nabla \phi \cdot \nabla \eta 
            + \iO \xi\eta  + \iO \pi(\phi)\eta 
            + \iG \nabla_\Gamma \psi \cdot \nabla_\Gamma \etaG 
            \\
            &\quad
            + \iG \xi_\Gamma \etaG + \iG \pi_\Gamma(\psi) \etaG
            + \sigma(K) \iG (\psi - \phi) (\etaG - \eta)
            \end{aligned}
        \end{align}
        \end{subequations}
        holds a.e.~in $[0,T]$ for all $\w \in \Vs$, $\zeta\in V$, $\zetaG\in V_\Gamma$ and $(\eta,\etaG) \in \CV_K$, where
        \begin{align*}
        	\xi \in \b(\phi) \quad \aeQ,
        	\quad 
        	\xi_\Gamma \in \b_\Gamma(\psi) \quad \aeS.
        \end{align*}
        \item The initial conditions are satisfied in the follwing sense:
        \begin{align*}
        	\phi(0)=\phi_0 \quad \aeO,
        	\quad 
        	\psi(0)=\psi_0 \quad \aeG.
        \end{align*}
        \item The weak energy dissipation law
        \begin{align*}
            &E_K\big(\phi(t),\psi(t)\big) 
            + 2\int_0^t\iO \nu(\phi)|\D\bv|^2
            + \int_0^t\iO \lambda(\phi) |\bv|^2
            + \int_0^t\iG \gamma(\psi) |\bv|^2
            \\
            &\qquad 
            + \int_0^t\iO M_\Omega(\phi)|\nabla \mu|^2
	        + \int_0^t\iG M_\Gamma(\psi)|\nabla_\Gamma \theta|^2
            \\[1ex]
            &\quad
            \le E_K(\phi_0,\psi_0)
        \end{align*}
        holds for all $t\in[0,T]$. 
    \end{enumerate}
\end{definition}

Notice that, if $\Omega$ is connected, the pressure $p$ can be reconstructed from \eqref{wf:1*} by proceeding as in Remark~\ref{REM:PRESS}.

\subsubsection{Existence of a weak solution}

\begin{theorem}
\label{THM:WEAK:SING}
Let $K\ge 0$ be arbitrary.
Suppose that \ref{ass:1:dom}--\ref{ass:4:viscosity} and \ref{ass:1:pot}--\ref{ass:2:pot:dominance} are fulfilled.
Let $(\phi_0,\psi_0)\in \CV_K$ denote any initial data satisfying \eqref{initial:data:weak}. In the case $K=0$, let the domain $\Omega$ be of class $C^2$.
Then, the Cahn--Hilliard--Brinkman system \eqref{CHB} admits at least one weak solution $(\bv, \phi, \xi, \mu, \psi, \xi_\Gamma, \theta)$ in the sense of Definition~\ref{DEF:WS:SING}.
In all cases, if the domain $\Omega$ is at least of class $C^2$, it holds that
\begin{alignat}{2}
    \label{L2H3:reg}  
    \phi \in L^2(0,T; \Hx2),
    \quad 
    \psi \in L^2(0,T; H^2(\Gamma))
\end{alignat}
and the equations
\begin{alignat}{2}
\label{eq:4pier}
	\mu &= -\Delta \phi + \xi + \pi (\phi)	
	\quad &&\text{in $Q$,}
	\\
	\label{eq:5pier}
	\theta &=  - \Delta_\Gamma \psi + \xi_\Gamma + \pi_\Gamma (\psi) + \deln \phi
	\quad &&\text{on $\Sigma$},
    \\
    \label{eq:6pier}
    K\deln \phi &= \phi - \psi \quad &&\text{on $\Sigma$}
\end{alignat}
are fulfilled in the strong sense.
\end{theorem}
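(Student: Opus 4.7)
The plan is to prove the theorem via a Moreau--Yosida regularization of the singular parts of the potentials, relying on the existence theory already developed for regular potentials (Theorem~\ref{THM:WEAK} when $K>0$ and Theorem~\ref{THM:Kto0} when $K=0$), and then to pass to the limit. More precisely, for $\eps\in(0,1)$, I would replace $\hat\b$ and $\hat\b_\Gamma$ by their Moreau--Yosida approximations $\hat\b_\eps$ and $\hat\b_{\Gamma,\eps}$, set $F_\eps:=\hat\b_\eps+\hat\pi$, $G_\eps:=\hat\b_{\Gamma,\eps}+\hat\pi_\Gamma$, and observe that $\beta_\eps=\hat\b_\eps'$ and $\beta_{\Gamma,\eps}=\hat\b_{\Gamma,\eps}'$ are $\eps^{-1}$-Lipschitz; combined with the Lipschitz continuity of $\pi,\pi_\Gamma$, this ensures that $F_\eps$ and $G_\eps$ satisfy \ref{ass:pot:reg:1} (with exponents $p=q=2$). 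Invoking the appropriate existence theorem, I obtain an approximating quintuplet $(\bv_\eps,\phi_\eps,\mu_\eps,\psi_\eps,\theta_\eps)$, which by construction satisfies a weak formulation as in \eqref{wf*} with $\xi_\eps:=\beta_\eps(\phi_\eps)$ and $\xi_{\Gamma,\eps}:=\beta_{\Gamma,\eps}(\psi_\eps)$, together with the energy inequality \eqref{DISSLAW:REG}.

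Next I would collect $\eps$-uniform estimates. From the energy dissipation law, Korn's inequality (Lemma~\ref{LEM:KORN}), and the assumptions \ref{ass:3:mobility}--\ref{ass:4:viscosity}, I get uniform bounds for $\bv_\eps$ in $L^2(0,T;\Vs)$, for $\bv_\eps|_\Gamma$ in $L^2(0,T;\HHG)$, for $(\phi_\eps,\psi_\eps)$ in $L^\infty(0,T;\CV_K)$, and for $\nabla\mu_\eps,\nabla_\Gamma\theta_\eps$ in $L^2$, together with an $L^\infty(0,T;L^1)$ bound on $\hat\b_\eps(\phi_\eps)$ and $\hat\b_{\Gamma,\eps}(\psi_\eps)$. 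The crucial assumption \eqref{pier2} then upgrades the bound on $\hat\b_\eps(\phi_\eps)$ to a uniform bound for $\phi_\eps$ in $L^\infty(0,T;L^2(\Omega))$ that is better than the one coming merely from $V$ control, and in particular secures the convex contribution to the energy. Time-derivative bounds on $\partial_t\phi_\eps$ and $\partial_t\psi_\eps$ in $L^2(0,T;\CV^*)$ follow directly from \eqref{wf:2*}--\eqref{wf:3*} by inserting $H^1$ test functions and using the $L^2$ regularity of the fluxes.

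The main obstacle, and the technical heart of the argument, is to establish uniform $L^1$ (or $L^2$) bounds on $\xi_\eps$, $\xi_{\Gamma,\eps}$ and, consequently, on the full norms of $\mu_\eps$ and $\theta_\eps$ in $L^2(0,T;V)$ and $L^2(0,T;\VG)$. Because the surface mean $\langle\psi_\eps\rangle_\Gamma=\mgz$ and the bulk mean $\langle\phi_\eps\rangle_\Omega=\mz$ are conserved and lie in the interiors of $D(\b_\Gamma)$ and $D(\b)$ (by \eqref{initial:data:weak}), I would test \eqref{wf:4*} with $(\eta,\eta_\Gamma)=(\phi_\eps-\mz,\psi_\eps-\mgz)$, use the monotonicity-based lower bound $\xi_{\Gamma,\eps}(\psi_\eps-\mgz)\ge c_0|\xi_{\Gamma,\eps}|-C$ (valid because $\mgz\in\mathrm{int}(D(\b_\Gamma))$, see e.g. the argument in \cite{CaCo, CGS_dom}), and analogously in the bulk; this produces an $L^1(\Sigma)$ bound on $\xi_{\Gamma,\eps}$, which together with the domination hypothesis \eqref{domination} yields the $L^1(Q)$ bound on $\xi_\eps$. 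Integrating \eqref{wf:4*} with $(\eta,\eta_\Gamma)=(1,0)$ and $(0,1)$ then controls the means of $\mu_\eps$ and $\theta_\eps$, after which the Poincaré and bulk-surface Poincaré inequalities (Lemma~\ref{LEM:POIN}) promote the gradient control to $L^2(V)\times L^2(\VG)$ bounds. Upgrading these to the full $L^2(\CH)$ control on $\xi_\eps,\xi_{\Gamma,\eps}$ is then obtained by comparison in \eqref{wf:4*}.

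Finally I would pass to the limit $\eps\to 0$. Weak(-star) compactness gives limits $\bv, \phi, \mu, \psi, \theta, \xi, \xi_\Gamma$ with the desired regularity, while Aubin--Lions type arguments (using the time-derivative bounds) provide strong convergence of $\phi_\eps\to\phi$ in $C^0([0,T];H)$ and of $\psi_\eps\to\psi$ in $C^0([0,T];\HG)$, sufficient for the convective and potential nonlinearities and for identifying the limits $\xi\in\b(\phi)$ a.e.~in $Q$ and $\xi_\Gamma\in\b_\Gamma(\psi)$ a.e.~on $\Sigma$ via the standard maximal monotonicity weak-strong lemma (e.g. \cite[Prop.~2.5]{brezis}). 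The energy inequality passes to the limit by lower semicontinuity. For the additional regularity \eqref{L2H3:reg} and the strong form \eqref{eq:4pier}--\eqref{eq:6pier}, once $\xi,\xi_\Gamma\in L^2(\CH)$ are available, the bulk-surface elliptic system consisting of \eqref{wf:4*} at fixed time is, for $\Omega\in C^2$, amenable to standard $H^2$ regularity theory for the Laplace/Laplace--Beltrami operators with Robin (resp.~Dirichlet when $K=0$) coupling, as in \cite{Knopf2020, Knopf2021a}.
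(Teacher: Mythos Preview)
Your overall plan (Yosida regularization, apply the regular-potential existence result, derive $\eps$-uniform estimates, pass to the limit) matches the paper's. However, there are two concrete gaps.

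\textbf{The test functions you propose are not admissible when $K=0$.} You write that you would test \eqref{wf:4*} with $(\phi_\eps-\mz,\psi_\eps-\mgz)$, and then with $(1,0)$ and $(0,1)$ to recover the means of $\mu_\eps,\theta_\eps$. When $K=0$ the test space is $\CV_0=\{(\eta,\eta_\Gamma):\eta_\Gamma=\eta|_\Gamma\}$, and none of these pairs lie in $\CV_0$ unless $\mz=\mgz$. The paper handles this by testing with $(\phi_\eps-\mz,\psi_\eps-\mz)$ when $K=0$ and then adding/subtracting $\mgz$; this produces an extra term $\int_\Gamma(G'_\eps(\psi_\eps)-\theta_\eps)(\mz-\mgz)$ that is rewritten (using the strong equation on $\Sigma$, available only because $\Omega$ is $C^2$) as a multiple of $\int_\Gamma\deln\phi_\eps$. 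This is precisely why the $C^2$ hypothesis is imposed for $K=0$. The estimates on the means of $\mu_\eps,\theta_\eps$ likewise cannot come from testing with $(1,0),(0,1)$; the paper obtains them by integrating the \emph{strong} equations \eqref{pier10}--\eqref{pier11}, again picking up an $\|\deln\phi_\eps\|_{\HG}$ term. The whole $K=0$ argument then becomes a loop: all key quantities (the $L^1$-norms of $\beta_\eps(\phi_\eps),\beta_{\Gamma,\eps}(\psi_\eps)$, the means of $\mu_\eps,\theta_\eps$, and ultimately the $H^2$-norms of $\phi_\eps,\psi_\eps$) are bounded by $\Phi_\eps+C\|\deln\phi_\eps\|_{\HG}$, and the loop is closed via elliptic bulk-surface regularity plus an Ehrling interpolation to absorb $\|\deln\phi_\eps\|_{\HG}\le C\|\phi_\eps\|_{H^s(\Omega)}$, $3/2<s<2$. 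Your sketch does not contain this mechanism.

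\textbf{``Comparison in \eqref{wf:4*}'' does not yield $L^2$ bounds on $\xi_\eps,\xi_{\Gamma,\eps}$.} In the weak formulation you cannot isolate $\xi_\eps$ pointwise without already knowing $\phi_\eps\in H^2$, which is what you are trying to establish. For $K>0$ the paper instead tests \eqref{wf:4} with $(0,\beta_{\Gamma,\eps}(\psi_\eps))$ (admissible since $\CV_K=\CV$) and uses $\beta'_{\Gamma,\eps}\ge0$ to get $\|\beta_{\Gamma,\eps}(\psi_\eps)\|_{\HG}^2$ on the left; then it tests with $(\beta_\eps(\phi_\eps),0)$ and controls the dangerous boundary term $\tfrac1K\int_\Gamma(\psi_\eps-\phi_\eps)\beta_\eps(\phi_\eps)$ by splitting it as $-\tfrac1K\int_\Gamma(\phi_\eps-\psi_\eps)(\beta_\eps(\phi_\eps)-\beta_\eps(\psi_\eps))+\tfrac1K\int_\Gamma(\psi_\eps-\phi_\eps)\beta_\eps(\psi_\eps)$, using monotonicity for the first piece and the domination \eqref{pier3} for the second. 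For $K=0$ these test pairs are again inadmissible, and the paper instead multiplies the strong equations \eqref{pier10}--\eqref{pier11} by $\beta_\eps(\phi_\eps)$ and $\beta_{\Gamma,\eps}(\psi_\eps)$, combining the two boundary terms $\int_\Gamma\deln\phi_\eps\,\beta_\eps(\phi_\eps)$ and $-\int_\Gamma\deln\phi_\eps\,\beta_{\Gamma,\eps}(\psi_\eps)$ via \eqref{pier3} and feeding the result into the normal-derivative loop above. Once these $L^2$ bounds on $\xi_\eps,\xi_{\Gamma,\eps}$ are in hand, the limit passage and the identification $\xi\in\beta(\phi)$, $\xi_\Gamma\in\beta_\Gamma(\psi)$ via maximal monotonicity proceed exactly as you describe.
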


\section{Analysis of the Cahn--Hilliard--Brinkman system with regular potentials}
\label{SEC:REGPOT}

\subsection{Existence of weak solutions in the case \texorpdfstring{$K>0$}{}}
\label{SUBSEC:EXREG}

\begin{proof}[Proof of Theorem~\ref{THM:WEAK}]

We intend to construct a weak solution to system \eqref{CHB} by discretizing the weak formulation \eqref{wf} by means of a Faedo--Galerkin scheme. In this proof, the letter $C$ will denote generic positive constants that may depend on $K$, $\Omega$, $T$, the initial data and the constants introduced in \ref{ass:1:dom}--\ref{ass:4:viscosity}, and may change their value from line to line. 
Recall that, as $K$ is assumed to be positive here, we have $\sigma(K)=\frac 1K$.

\subsubsection{Construction of local-in-time approximate solutions}

It is well known that the Poisson--Neumann eigenvalue problem 
\begin{align}
\label{EVP:PN}
    -\Delta u = \lambda_\Omega u \;\;\text{in $\Omega$}, 
    \quad
    \deln u  = 0 \;\;\text{on $\Gamma$}
\end{align}
possesses countably many eigenvalues and a corresponding sequence $\{u_i\}_{i\in\mathbb N}\subset V$ of $H$-normalized eigenfunctions which form an orthonormal basis of $H$ and an orthogonal Schauder basis of $V$. Similarly, invoking the spectral theorem for compact self-adjoint operators, it follows that the eigenvalue problem 
\begin{align}
\label{EVP:LB}
    -\Delta_\Gamma v = \lambda_\Gamma v \;\;\text{on $\Gamma$}
\end{align}
for the Laplace--Beltrami operator
possesses countably many eigenvalues and a corresponding sequence $\{v_i\}_{i\in\mathbb N}\subset V_\Gamma$ of $H_\Gamma$-normalized eigenfunctions which form an orthonormal basis of $H_\Gamma$ and an orthogonal Schauder basis of $V_\Gamma$.
For any $k\in\mathbb N$, we now define the finite-dimensional subspaces
\begin{align*}
    V_k &:= {\rm span}\{ u_i : 1\leq i \leq k\} 
    \subset V,
    \\
    V_{\Gamma,k} &:= {\rm span}\{ v_j : 1\leq j \leq k\}
    \subset V_\Gamma,
    \\
    \CV_k &:= {\rm span}\{ (u_i,v_j) : 1\leq i,j \leq k\}
    \subset \CV.
\end{align*}
We point out that, due to the above considerations, the inclusions 
\begin{align*}
    \bigcup_{k\in\mathbb N} V_k \subseteq V,
    \qquad
    \bigcup_{k\in\mathbb N} V_{\Gamma,k} \subseteq V_\Gamma,
    \qquad
    \bigcup_{k\in\mathbb N} \CV_k \subseteq \CV
\end{align*}
are dense.
In order to construct a sequence of approximate solutions, we use a semi-Galerkin approach. This means that only the quantities $\phi$, $\psi$, $\mu$ and $\theta$ are approximated by a Galerkin-scheme, and the approximate velocity field is obtained by directly solving the corresponding Brinkman subsystem. This approach has already been employed in \cite{Ebenbeck2019} for a Cahn--Hilliard--Brinkman model without dynamic boundary conditions and a no-friction boundary condition for the velocity equation. Compared to the present paper, some of the steps are carried out in \cite{Ebenbeck2019} in more detail, so we recommend it as a reference work.

To construct a sequence of approximate solutions, we now make the ansatz
\begin{align}
\label{DEF:APPROX}
\begin{aligned}
	\phi_k (x,t) & = \sum_{i=1}^k a_i^k (t) u_i(x),
	\quad 
    && \psi_k (x,t) = \sum_{i=1}^k b_i^k (t) v_i(x),
	\\
	\mu_k (x,t) & = \sum_{i=1}^k c_i^k (t) u_i(x),
	\quad 
	&& \theta_k (x,t) = \sum_{i=1}^k d_i^k (t) v_i(x)
 \end{aligned}
\end{align}
for every $k\in\mathbb N$, where the coefficients $\mathbf{a}^k := (a_1^k,...,a_k^k)^\top$, $\mathbf{b}^k := (b_1^k,...,b_k^k)^\top$, $\mathbf{c}^k := (c_1^k,...,c_k^k)^\top$, $\mathbf{d}^k := (d_1^k,...,d_k^k)^\top$ are still to be determined.

Let now $k\in \N$ and $t\in [0,T]$ be arbitrary. 
For any choice of $\mathbf{a}^k$, $\mathbf{b}^k$, $\mathbf{c}^k$ and $\mathbf{d}^k$, 
and $(\phi_k,\psi_k,\mu_k,\theta_k)$ as defined in \eqref{DEF:APPROX}, we
consider the bilinear form
\begin{align}
\label{DEF:BKT}
    \begin{aligned}
    &\mathcal{B}_{k,t}: \Vs \times \Vs \to \R,
    \\
    &(\bv,\w) \mapsto 
    2 \iO \nu\big(\phi_k(t)\big)\, \D\bv : \D\w 
    + \iO \lambda\big(\phi_k(t)\big) \, \bv \cdot \w 
    + \iG \gamma\big(\psi_k(t)\big) \, \bv \cdot \w,
    \end{aligned}
\end{align}
which is related to the weak formulation of the Brinkman equation with Navier-slip boundary condition.
It is obvious that $\mathcal{B}_{k,t}$ is symmetric, and, in view of \ref{ass:4:viscosity}, it is easy to see that $\mathcal{B}_{k,t}$ is continuous.
We further recall that every $\bv\in \Vs$ satisfies $\div(\bv) = 0$ \aeO, and $\bv\cdot\nnn = 0$ \aeG. 
Now, if \ref{ass:4:viscosity:1} holds, we use Korn's inequality (Lemma~\ref{LEM:KORN}\ref{Korn1}) to deduce 
\begin{align}
    \label{EST:BKT:1}
    \mathcal{B}_{k,t}(\bv,\bv) 
    \ge 2\nu_1 \iO \, \D\bv:\D\bv + \gamma_1 \iG |\bv|^2 
    \ge \min\{2\nu_1,\gamma_1\} \big( \norma{\D\bv}_{\BH}^2 + \norm{\bv}_{\HHG}^2 \big)
    \ge C \norma{\bv}_{\VV}^2
\end{align}
for all $\bv\in \Vs$. On the other hand, if \ref{ass:4:viscosity:2} holds, we use Korn's inequality (Lemma~\ref{LEM:KORN}\ref{Korn2}) to conclude
\begin{align}
    \label{EST:BKT:2}
    \mathcal{B}_{k,t}(\bv,\bv) 
    \ge 2\nu_1 \iO \, \D\bv:\D\bv 
    \ge C \norma{\bv}_{\VV}^2
\end{align}
for all $\bv\in \Vs$.
This means that the bilinear form $\mathcal{B}_{k,t}$ is coercive in $\Vs$. Hence, the Lax--Milgram lemma implies that there exists a unique function $\bv_k(t) \in \Vs$ solving
\begin{align}
    \label{WF:BKT}
    \mathcal{B}_{k,t}\big(\bv_k(t),\w\big) = - \iO \phi_k(t) \nabla \mu_k(t) \cdot \w
            - \iG \psi_k(t) \nabla_\Gamma\th_k(t) \cdot \w
\end{align}
for all $\w\in \Vs$. As $t\in[0,T]$ was arbitrary, this defines a function $\bv_k:[0,T] \to \Vs$. 
We point out that by construction, $\bv_k$ depends continuously on the coefficients $\mathbf{a}^k$, $\mathbf{b}^k$, $\mathbf{c}^k$ and $\mathbf{d}^k$.

We now want to adjust the coefficient vectors $\mathbf{a}^k$, $\mathbf{b}^k$, $\mathbf{c}^k$ and $\mathbf{d}^k$ such that the discretized weak formulation
\begin{subequations}
    \label{wf:gal}
    \begin{alignat}{2}
        & 2 \iO \nu(\phi_k) \D\bv_k : \D\w 
        + \iO \lambda(\phi_k) \bv_k \cdot \w 
        + \iG  \gamma(\psi_k)\bv_k \cdot \w 
        \notag\\
        &\quad =
        - \iO \phi_k \nabla \mu_k \cdot \w
        - \iG \psi_k \nabla_\Gamma\th_k \cdot \w,
        \label{wf:1:gal}
         \\
        \label{wf:2:gal}
        & \<\delt \phi_k, \zeta>_{V}
         - \iO \phi_k \bv_k \cdot \nabla \zeta
         + \iO M_\Omega(\phi_k)\nabla \mu_k \cdot \nabla \zeta 
         =0 ,
         \\
        \label{wf:3:gal}
        & \<\delt \psi_k, \zetaG>_{\VG}
         - \iG \psi_k \bv_k \cdot \nabla_\Gamma \zetaG
         + \iG M_\Gamma(\psi_k)\nabla_\Gamma \theta_k \cdot \nabla_\Gamma \zetaG
         =0,
         \\
     & 
     \label{wf:4:gal}
        \iO \mu_k \eta 
      + \iG \theta_k \etaG
         = \iO \nabla \phi_k \cdot \nabla \eta 
      + \iO F'(\phi_k) \eta
      + \iG \nabla_\Gamma \psi_k \cdot \nabla_\Gamma \etaG 
      \notag\\
         &\hspace{20ex}
      + \iG G'(\psi_k) \etaG
      + \frac 1K \iG (\psi_k - \phi_k) (\etaG - \eta)
    \end{alignat}
\end{subequations}
for all test functions $\w \in \Vs$, $\zeta\in V_k$, $\zetaG\in V_{\Gamma,k}$ and $(\eta,\etaG) \in \CV_k$, and the initial conditions
\begin{align}
    \label{wf:ini:gal}
    \phi_k(0) = \phi_{0,k} := \mathbb{P}_{V_k}(\phi_0)
    \quad\text{and}\quad
    \psi_k(0) = \psi_{0,k} := \mathbb{P}_{V_{\Gamma,k}}(\psi_0)
\end{align}
are fulfilled. 
With the symbol $\mathbb{P}_{V_k}$ we denote the $H$-orthogonal projection of $V$ onto $V_k$ whereas $\mathbb{P}_{V_{\Gamma,k}}$ denotes the $H_\Gamma$-orthogonal projection of $V_\Gamma$ onto $V_{\Gamma,k}$. 

Choosing $\zeta=u_j$ in \eqref{wf:2:gal} and $\zeta_\Gamma=v_j$ in \eqref{wf:3:gal} for $j=1,...,k$, we infer that $(\mathbf{a}^k,\mathbf{b}^k)^\top$ is determined by a system of $2k$ nonlinear ordinary differential equations subject to the initial conditions
\begin{align*}
    [\mathbf{a}^k]_i(0) = a_i^k(0) = \scp{\phi_0}{u_i}_{H}
    \quad\text{and}\quad
    [\mathbf{b}^k]_i(0) = b_i^k(0) = \scp{\psi_0}{v_i}_{H_\Gamma}
\end{align*}
for all $i\in\{1,...,k\}$.
In particular, since the functions $M_\Omega$ and $M_\Gamma$ are continuous and $\bv_k$ depends continuously on the coefficients $\mathbf{a}^k$, $\mathbf{b}^k$, $\mathbf{c}^k$ and $\mathbf{d}^k$, the same holds for the right-hand side of this ODE system. Moreover, choosing $(\eta,\eta_\Gamma)=(u_j,0)$ and $(\eta,\eta_\Gamma)=(0,v_j)$ for $j=1,...,k$ in \eqref{wf:4:gal}, respectively, we find that the coefficients $\mathbf{c}^k$ and $\mathbf{d}^k$ are explicitly given by $2k$ algebraic equations whose right-hand side depends continuously on $\mathbf{a}^k$ and $\mathbf{b}^k$. This allows us to replace $\mathbf{c}^k$ and $\mathbf{d}^k$ in the right-hand side of the aforementioned ODE system to obtain a closed ODE system for the vector-valued function $(\mathbf{a}^k,\mathbf{b}^k)^\top$ whose right-hand side depends continuously on $(\mathbf{a}^k,\mathbf{b}^k)^\top$. Consequently, the Cauchy--Peano theorem implies the existence of at least one local-in-time solution 
$$(\mathbf{a}^k,\mathbf{b}^k)^\top:[0,T_k^*)\cap [0,T] \to \R^{2k}$$ 
to the corresponding initial value problem. Here, we take $T_k^*>0$ as large as possible meaning that $[0,T_k^*)\cap [0,T]$ is the right-maximal time interval of this solution. We can now reconstruct 
$$(\mathbf{c}^k,\mathbf{d}^k)^\top:[0,T_k^*)\cap [0,T] \to \R^{2k}$$ 
by the aforementioned system of $2k$ algebraic equations. 
Without loss of generality, we now assume $T_k^*\le T$ to simplify the notation.
Recalling the ansatz \eqref{DEF:APPROX} as well as the construction of $\bv_k$, we obtain an approximate solution $(\bv_k,\phi_k,\mu_k,\psi_k,\theta_k)$ with
\begin{align*}
    \bv_k \in C^0([0,T_k^*);\Vs),
    \quad
    (\phi_k,\psi_k) \in C^1([0,T_k^*);\CV),
    \quad\text{and}\quad
    (\mu_k,\theta_k) \in C^0([0,T_k^*);\CV),
\end{align*}
which fulfills the discretized weak formulation \eqref{wf:gal} on the time interval $[0,T_k^*)$. 


%

\subsubsection{Uniform estimates} \label{SSSEC:UNI}

Let now $T_k\in (0,T_k^*)$ be arbitrary.
We derive suitable estimates for the approximate solutions $(\bv_k, \phi_k, \mu_k, \psi_k, \theta_k)$ that are uniform in $k$ and $T_k$.
These estimates will allow us to extend the approximate solutions onto the whole interval $[0,T]$ and to extract suitable convergent subsequences.

\step First estimate

We first test \eqref{wf:1:gal} by $\bv_k$, \eqref{wf:2:gal} by $\mu_k$, \eqref{wf:3:gal} by $\theta_k$, and \eqref{wf:4:gal} by $-  (\delt\phi_k,\delt\psi_k)$. We then add these equations and integrate the resulting equation with respect to time from $0$ to an arbitrary $t\in [0,T_k]$. We obtain
\allowdisplaybreaks[0]
\begin{align}
    \label{EST:1}
    &E_K\big(\phi_k(t),\psi_k(t)\big)
    +  2\int_0^t \iO \nu(\phi_k) \abs{\D \bv_k}^2
    +  \int_0^t \iO \lambda(\phi_k) \abs{\bv_k}^2
    +  \int_0^t \iG \gamma(\psi_k) \abs{\bv_k}^2
    \notag\\
    &\quad
    + \int_0^t \iO M_\Omega(\phi_k) \abs{\nabla \mu_k}^2
    + \int_0^t \iG M_\Gamma(\psi_k)\abs{\nabla_\Gamma \theta_k}^2  
    \leq E_K(\phi_{0,k},\psi_{0,k})
\end{align}
\allowdisplaybreaks
for every $t \in [0,T_k]$. Due to \eqref{wf:ini:gal} and the assumptions on the initial data, we have
\begin{align}
    \label{EST:INI:1}
    \norma{\phi_{0,k}}_{V} \le C\norma{\phi_0}_{V} \le C
    \quad\text{and}\quad
    \norma{\psi_{0,k}}_{\VG} \le C\norma{\psi_0}_{\VG} \le C.
\end{align}
In view of the growth conditions from \ref{ass:pot:reg:1}, this directly implies
\begin{align}
    \label{EST:INI:2}
    \norma{F\big(\phi_{0,k}\big)}_{L^1(\Omega)} \le C,
    \quad
    \norma{G\big(\psi_{0,k}\big)}_{L^1(\Gamma)} \le C
    \quad\text{and thus},\quad
    E_K(\phi_{0,k},\psi_{0,k}) \le C.
\end{align}
Hence, using the conditions in \ref{ass:3:mobility} and \ref{ass:4:viscosity}, a straightforward computation yields
\begin{align}
    \label{EST:UNI:1}
    & \sqrt{2\nu_1} \norma{\D\bv_k}_{\Lk2 {\BH}} 
    + \sqrt{\gamma_1} \norma{\bv_k}_{\Lk2 \HHG}
    + \norma{\nabla \phi_k}_{\Lk\infty {\HH}}
	+ \norma{\nabla_\Gamma \psi_k}_{\Lk\infty {\HH_\Gamma}}
	\notag\\
	& \quad 
    + \norma{\nabla \mu_k}_{\Lk2 {\HH}}
	+ \norma{\nabla_\Gamma \theta_k}_{\Lk2 {\HH_\Gamma}}
	\leq C.
\end{align}
Invoking Korn's inequality (see Lemma~\ref{LEM:KORN}), we directly infer
\begin{align}
    \label{EST:UNI:2}
    \norma{\bv_k}_{\Lk2 {\Vs}} \le C.
\end{align}
Next, taking $\zeta=\frac 1 {|\Omega|}$ in \eqref{wf:2:gal}, and $\zeta_\Gamma=\frac 1{|\Gamma|}$ in \eqref{wf:3:gal}, we infer
\begin{align*}
    \<\phi_k(t)>_\Omega
    =
    \<\phi_{k,0}>_\Omega,
    \quad 
    \<\psi_k(t)>_\Gamma
    = \<\psi_{{k,0}}>_\Gamma
    \quad \text{for all $t \in [0,T_k]$}.
\end{align*}
Hence, in view of \eqref{EST:INI:1} and \eqref{EST:UNI:1}, we use the Poincar\'e--Wirtinger inequality in $\Omega$ and Poincar\'e's inequality on $\Gamma$ (see~Lemma~\ref{LEM:POIN}) to conclude
\begin{align}
    \label{EST:UNI:3}
	& \norma{ \phi_k}_{\Lk\infty {V}}
	+ \norma{\psi_k}_{\Lk\infty {\VG}}
	\leq C.
\end{align}

\step 
Second estimate

Let now $\zeta\in \Lk2 V$ and $\zetaG \in \Lk2 \VG$ be arbitrary test functions. 
Testing \eqref{wf:2:gal} with $\ov \zeta:= \mathbb{P}_{V_k}(\zeta)$ and exploiting \eqref{EST:UNI:1}--\eqref{EST:UNI:3} along with Sobolev's embeddings, we obtain
\begin{align}
    \label{EST:T1}
    &\left| \int_0^{T_k} \< \delt\phi_k , \zeta >_{V} \;\right| 
    = \left| \int_0^{T_k} \< \delt\phi_k , \ov \zeta >_{V} \;\right| 
    \notag \\
    &\quad = \left| \int_0^{T_k}\!\!\iO \phi_k \bv_k \cdot \nabla \ov\zeta
         - \int_0^{T_k}\!\!\iO M_\Omega(\phi_k)\nabla \mu_k \cdot \nabla \ov\zeta \;\right| 
    \notag \\[1ex]
    &\quad
    \le
    \left(\norma{\phi_k}_{\Lk\infty {L^6(\Omega)}}\norma{\bv_k}_{\Lk2 {{\LL^{3}(\Omega)}}}
	+ M_2 \norma{\nabla\mu_k}_{\Lk2 \HH} \right)
    \norma{\ov\zeta}_{\Lk2 V}
     \notag \\[1ex]
    &\quad
    \le
     C \norma{\zeta}_{\Lk2 V}.
\end{align}
Hence, taking the supremum over all $\zeta\in \Lk2 V$ with $\norma{\zeta}_{\Lk2 V} \le 1$, we deduce
\begin{align}
    \label{EST:UNI:7}
    \norma{\delt \phi_k}_{\Lk2 {V^*}} \le C.
\end{align}
Proceeding similarly and testing \eqref{wf:3:gal} with ${\ov \zeta_\Gamma}:= \mathbb{P}_{V_{\Gamma,k}}(\zetaG)$, we obtain the estimate
\begin{align}
    \label{EST:T2}
    &\left| \int_0^{T_k} \< \delt\psi_k , \zetaG >_{ \VG} \;\right| 
    \notag\\[1ex]
    &\le \Bigl(C\norma{\psi_k}_{\Lk\infty {\VG}}\norma{\bv_k}_{\Lk2 {\Vs}}
    + M_{\Gamma,2}\norma{\nabla_\Gamma \theta_k}_{\Lk2 {\HH_\Gamma}} \Bigr)\norma{{\ov \zeta}_\Gamma}_{\Lk2 \VG}
    \notag\\[1ex]
    &\leq C \norma{\zetaG}_{\Lk2 \VG}.
\end{align}
Taking the supremum over all $\zetaG \in\Lk2 \VG$ with $\norma{\zetaG}_{\Lk2 \VG}\le 1$, we conclude
\begin{align}
    \label{EST:UNI:8}
	\norma{\delt \psi_k}_{\Lk2 {V_\Gamma^*}}
	\leq C.
\end{align}

\step 
Third estimate

Next, we want to derive uniform bounds on $\mu_k$ in $\Lk4 H \cap \Lk2 V$ and on $\th_k$ in $\Lk4 {\HG} \cap \Lk2 {\VG}$. 
Therefore, we choose arbitrary functions $\eta\in \Lk1 V$ and $\eta_\Gamma\in \Lk1 \VG$ and we set $\ov\eta:= \mathbb{P}_{V_k}(\eta)$ and $\ov{\eta_\Gamma}:= \mathbb{P}_{V_{\Gamma,k}}(\eta_\Gamma)$.
Testing \eqref{wf:4:gal} by $(\ov \eta,\ov \etaG)$, recalling the growth conditions from \ref{ass:pot:reg:1} as well as the uniform bounds \eqref{EST:UNI:1} and \eqref{EST:UNI:2}, we use H\"older's inequality and Sobolev's embedding theorem to derive the estimate
\begin{align*}
    &\abs{\int_0^{T_k}  \<(\mu_k,\th_k), (\eta,\etaG) >_{\CV}} 
    =  \abs{ \int_0^{T_k} \<(\mu_k,\th_k), (\ov\eta,\ov\etaG) >_{\CV} }   
    \\[1ex]
    & \quad 
    \leq 
    \int_0^{T_k} \Big[ \norma{\nabla \phi_k}_{\HH}\norma{\nabla \ov\eta}_{\HH}
        + \norma{F'(\phi_k)}_{L^{6/5}(\Omega)} \norma{\ov\eta}_{L^{6}(\Omega)}
        + \norma{\nabla_\Gamma\psi_k}_{\HH_\Gamma} \norma{\nabla_\Gamma \ov\etaG}_{\HH_\Gamma} 
    \\ 
    & \qquad\qquad\quad
        + \norma{G'(\psi_k)}_{\HG} \norma{\ov\etaG}_{\HG}
        + \tfrac 1K \norma{\psi_k - \phi_k}_{\HG} \norma{\ov\etaG - \ov\eta}_{\HG}
    \Big] 
    \\[1ex]
    & \quad 
    \leq
    C ( 1+ \norma{\phi_k}_{\Lk\infty V}^{p-1} + \norma{\psi_k}_{\Lk\infty \VG}^{q-1}) \norma{(\eta,\etaG)}_{\Lk1 \CV}
\end{align*}
in $[0,T_k]$.
Taking the supremum over all $(\eta,\etaG) \in \Lk1\CV$ with $\norma{(\eta,\etaG)}_{\Lk1 \CV}\le 1$, and using \eqref{EST:UNI:3}, we infer
\begin{align}
    \label{EST:UNI:4}
    \norma{(\mu_k,\th_k)}_{\Lk\infty {\CV^*}} \leq C.
\end{align}
We further have
\begin{align*}
   \norma{(\mu_k,\th_k)}^2_\CH
   & =  \<(\mu_k,\th_k),(\mu_k,\th_k) >_{\CV}
    \\ 
   & 
   \leq C   \norma{(\mu_k,\th_k)}_{{\CV^*}}   \big(\norma{( \mu_k,\th_k)}_\CH + \norma{(\nabla \mu_k,\nabla_\Gamma\th_k )}_{\HH\times \HH_\Gamma}\big)
   \\ 
   &   
   \leq \frac 12 \norma{(\mu_k,\th_k)}^2_\CH 
   + C \norma{(\mu_k,\th_k)}_{{\CV^*}}  \norma{(\nabla \mu_k,\nabla_\Gamma\th_k)}_{\HH\times \HH_\Gamma}
   + C \norma{(\mu_k,\th_k)}_{{\CV^*}}^2.
\end{align*}
Hence, squaring and integrating this estimate with respect to time, we use \eqref{EST:UNI:1} and \eqref{EST:UNI:4} to conclude
\begin{align}
    \label{EST:UNI:5}
    \norma{(\mu_k,\th_k)}_{\Lk4 \CH} \leq C.
\end{align}
In particular, we thus have
\begin{align}
    \label{EST:UNI:6}
	\norma{\mu_k}_{\Lk2 {V}}
	+ \norma{\th_k}_{\Lk2 {\VG}}
	\leq C.
\end{align}

\step Overall estimate

Combining \eqref{EST:UNI:1}--\eqref{EST:UNI:3}, \eqref{EST:UNI:7}, \eqref{EST:UNI:8}, \eqref{EST:UNI:5} and \eqref{EST:UNI:6}, we obtain the overall uniform estimate
\begin{align}
    \label{EST:UNI:OV*}
    & \norma{\bv_k}_{\Lk2 {\Vs} \cap \Lk2 \HHG}
    + \norma{\phi_k}_{H^1(0,T_k;{V^*}) \cap \Lk\infty {V}}
	+ \norma{\psi_k}_{H^1(0,T_k;{\VG^*})\cap \Lk\infty {\VG}}
	\notag\\
	& \quad 
    + \norma{\mu_k}_{\Lk4 {H}\cap \Lk2 {V}}
	+ \norma{\theta_k}_{\Lk4 {\HG}\cap \Lk2 {\VG}}
	\leq C.
\end{align}

\subsubsection{Extension of the approximate solution onto the whole time interval~\texorpdfstring{$[0,T]$}{}}

In Step~1, we constructed the coefficients
$(\mathbf a^k,\mathbf b^k)^\top$ as a solution of a nonlinear system of ODEs existing on its right-maximal time interval $[0,T_k^*)\cap [0,T]$. We now assume that $T_k^*\le T$.
By the definition of the approximate solutions given in \eqref{DEF:APPROX} and the uniform bound \eqref{EST:UNI:OV*}, we infer that for any $T_k\in[0,T_k^*)$, all $t\in [0,T_k]$, and all $i\in\{1,...,k\}$, 
\begin{align*}
    |a_i^k(t)| + |b_i^k(t)| 
    &= \bigabs{ \scp{\phi_k(t)}{u_i}_{H} } 
    + \bigabs{ \scp{\psi_k(t)}{v_i}_{\HG} }  
    \\
    &\le \norm{\phi_k}_{\Lk\infty H}
    + \norm{\psi_k}_{\Lk\infty \HG}
    \le C.
\end{align*}
This means that the solution $(\mathbf a^k,\mathbf b^k)^\top$ is bounded on the time interval $[0,T_k^*)$ by a constant that is independent of $T_k$ and $k$.
Hence, according to classical ODE theory, the solution can thus be extended beyond the time $T_k^*$. However, as the solution was assumed to be right-maximal, this is a contradiction. 
We thus have $T_k^*>T$, which directly implies $[0,T_k^*)\cap [0,T] = [0,T]$. This means that the solution $(\mathbf a^k,\mathbf b^k)^\top$ of the ODE system actually exists on the whole time interval $[0,T]$. 
As the coefficients $\mathbf c^k$ and $\mathbf d^k$ can be reconstructed from $\mathbf a^k$ and $\mathbf b^k$ by the corresponding system of algebraic equations, they also exist on the whole time interval $[0,T]$. Recalling \eqref{DEF:APPROX} and the construction of $\bv_k$ this directly entails that the approximate solution $(\bv_k, \phi_k, \mu_k, \psi_k, \theta_k)$ actually exists in $[0,T]$.
Hence, choosing $T_k = T$ in \eqref{EST:UNI:OV*}, we eventually conclude 
\begin{align}
    \label{EST:UNI:OV}
    & \norma{\bv_k}_{\L2 {\Vs} \cap\L2 \HHG}
    + \norma{\phi_k}_{\H1 {V^*} \cap \L\infty {V}}
	+ \norma{\psi_k}_{\H1 {\VG^*} \cap \L\infty {\VG}}
	\notag\\
	& \quad 
    + \norma{\mu_k}_{\L4 {H} \cap \L2 {V}}
	+ \norma{\theta_k}_{\L4 {\HG} \cap \L2 {\VG}}
	\leq C.
\end{align}

\subsubsection{Convergence to a weak solution as \texorpdfstring{$k\to\infty$}{}} 
\label{SSSEC:CONV}

Considering the uniform estimate \eqref{EST:UNI:OV}, we use the Banach--Alaoglu theorem and the Aubin--Lions--Simon lemma to infer that there exist functions $\bv$, $\phi$, $\mu$, $\psi$ and $\theta$ such that for any $s\in [0,1)$,
\begin{subequations}
\label{CONV}
\begin{alignat}{2}
    \label{CONV:V}
    \bv_k &\to \bv
    &&\quad \text{weakly in $\L2 \Vs $},
    \\
    \label{CONV:V:BD}
    \bv_k|_\Gamma & \to \bv|_\Gamma
    &&\quad \text{weakly in $\L2 \HHG$},
    \\
    \label{CONV:PHI}
    \phi_k &\to \phi
    &&\quad\text{weakly-$^*$ in $\L\infty V$, weakly in $\H1 {V^*}$}, \notag\\
    &&&\qquad\text{strongly in $C^0([0,T];H^s(\Omega))$, and \aeQ},
    \\
    \label{CONV:PSI}
    \psi_k &\to \psi
    &&\quad\text{weakly-$^*$ in $\L\infty \VG$, weakly in $\H1 {\VG^*}$}, \notag\\
    &&&\qquad\text{strongly in $C^0([0,T];H^s(\Gamma))$, and \aeS},
    \\
    \label{CONV:MU}
    \mu_k &\to \mu
    &&\quad\text{weakly in $\L2 V \cap \L4 H$},
    \\
    \label{CONV:THETA}
    \theta_k &\to \theta
    &&\quad\text{weakly in $\L2 \VG \cap \L4 \HG$},
\end{alignat}
\end{subequations}
as $k\to \infty$ along a non-relabeled subsequence. 
In particular, this shows that the functions $\bv$, $\phi$, $\psi$, $\mu$ and $\theta$ have the regularity demanded in Definition~\ref{DEF:WS:REG}\ref{DEF:WS:REG:i}.

Due to the trace theorem, the strong convergence from \eqref{CONV:PHI} (with $s>\frac 12$) directly yields
\begin{align}
    \label{CONV:PHI:G}
    \phi_k|_\Gamma \to \phi|_\Gamma \quad\text{strongly in $C^0([0,T];\HG)$}.
\end{align}
Recalling the growth conditions from \ref{ass:pot:reg:1}, we further deduce from the uniform bound \eqref{EST:UNI:OV} that 
\begin{align*}
    \norma{F'(\phi_k)}_{L^{6/5}(Q)} \le C 
    \quad\text{and}\quad
    \norma{G'(\phi_k)}_{L^{2}(\Sigma)} \le C.
\end{align*}
Hence, there exist weakly convergent subsequences in the respective spaces. As $F'$ and $G'$ are continuous, we use the pointwise convergences from \eqref{CONV:PHI} and \eqref{CONV:PSI} to conclude
\begin{alignat}{2}
    \label{CONV:F'}
    F'(\phi_k) &\to F'(\phi)
    &&\quad\text{weakly in $L^{6/5}(Q)$ and \aeQ},\\
    \label{CONV:G'}
    G'(\psi_k) &\to G'(\psi)
    &&\quad\text{weakly in $L^{2}(\Sigma)$ and \aeS}
\end{alignat}
since the weak limit and the pointwise limit must coincide (see, e.g., \cite[Proposition~9.2c]{DiBenedetto}). Furthermore, it follows from the pointwise convergences in \eqref{CONV:PHI} and \eqref{CONV:PSI} that, as $k\to\infty$,
\begin{alignat}{3}
    \label{CONV:MNL:AE}
    M_\Omega(\phi_k) &\to M_\Omega(\phi),
    &\quad \nu(\phi_k) &\to \nu(\phi),
    \quad \lambda(\phi_k) \to \lambda(\phi)
    &&\quad\text{\aeQ},\\
    \label{CONV:MGG:AE}
    M_\Gamma(\psi_k) &\to M_\Gamma(\psi),
    &\quad \gamma(\psi_k) &\to \gamma(\psi)
    &&\quad\text{\aeS}
\end{alignat}
as the functions $M_\Omega$, $M_\Gamma$, $\nu$, $\lambda$ and $\gamma$ are continuous. Since, due to \ref{ass:3:mobility} and \ref{ass:4:viscosity}, these functions are also bounded, we use Lebesgue's dominated convergence theorem 
along with the weak convergences in \eqref{CONV}
to infer that
\begin{alignat}{2}
    \label{CONV:NUPHI}
    \nu(\phi_k) \D\bv_k &\to \nu(\phi) \D\bv
    &&\quad\text{weakly in $L^2(Q;\R^{d\times d})$},
    \\
    \label{CONV:LAMPHI}
    \lambda(\phi_k) \bv_k &\to \lambda(\phi) \bv
    &&\quad\text{weakly in $L^2(Q;\R^d)$},
    \\
    \label{CONV:MPHI}
    M_\Omega(\phi_k) \nabla\mu_k &\to M_\Omega(\phi) \nabla\mu
    &&\quad\text{weakly in $L^2(Q;\R^d)$},
    \\
    \label{CONV:GV}
    \gamma(\psi_k) \bv_k &\to \gamma(\psi) \bv
    &&\quad\text{weakly in $L^2(\Sigma;\R^d)$},
    \\
    \label{CONV:MGPSI}
    M_\Gamma(\psi_k) \nabla_\Gamma\theta_k &\to M_\Gamma(\psi) \nabla_\Gamma\theta
    &&\quad\text{weakly in $L^2(\Sigma;\R^d)$}.
\end{alignat}
Combining the convergences \eqref{CONV:V}--\eqref{CONV:THETA}, \eqref{CONV:PHI:G}--\eqref{CONV:G'} and \eqref{CONV:NUPHI}--\eqref{CONV:MGPSI}, it is straightforward to pass to the limit as $k\to \infty$ in the discretized weak formulation \eqref{wf:gal} to conclude that the quintuplet $(\bv,\phi,\mu,\psi,\theta)$ fulfills the variational formulation \eqref{wf} for all test functions $\w \in \Vs$,
\begin{align*}
    \zeta \in \bigcup_{k\in\mathbb N} V_k \subseteq V,
    \qquad
    \zetaG \in \bigcup_{k\in\mathbb N} V_{\Gamma,k} \subseteq V_\Gamma,
    \qquad
    (\eta,\etaG) \in \bigcup_{k\in\mathbb N} \CV_k \subseteq \CV.
\end{align*}
Hence, because of density, \eqref{wf} holds true
for all test functions $\w \in \Vs$, $\zeta\in V$, $\zetaG\in V_\Gamma$ and $(\eta,\etaG) \in \CV = \CV_K$. This verifies Definition~\ref{DEF:WS:REG}\ref{DEF:WS:REG:ii}.

Moreover, we deduce from \eqref{wf:ini:gal} that
\begin{align*}
    \big( \phi_k(0), \psi_k(0) \big) \to \big( \phi_0, \psi_0 \big)
    \quad\text{strongly in $\CH$}
\end{align*}
as the orthogonal projections converge strongly in $H$ and in $\HG$, respectively. On the other hand, it follows from the strong convergences in \eqref{CONV:PHI} and \eqref{CONV:PSI} that
\begin{align*}
    \big( \phi_k(0), \psi_k(0) \big) \to \big( \phi(0), \psi(0) \big)
    \quad\text{strongly in $\CH$.}
\end{align*}
Hence, due to the uniqueness of the limit, this verifies Definition~\ref{DEF:WS:REG}\ref{DEF:WS:REG:iii}.

We still need to establish the weak energy dissipation law. 
Therefore, let $\rho \in C^\infty([0,T])$ be an arbitrary nonnegative test function. 
Employing the convergences \eqref{CONV:PHI} and \eqref{CONV:PSI}, the weak lower semicontinuity of the mappings 
\begin{align*}
    L^2(0,T;V) &\ni \zeta \mapsto \int_0^T \norma{\nabla \zeta(t)}_{\HH}^2 \rho(t), \\
    L^2(0,T;\VG) &\ni \xi \mapsto \int_0^T \norma{\nabla_\Gamma \xi(t)}_{\HH_\Gamma}^2 \rho(t),
\end{align*}
as well as Fatou's lemma, we deduce
\begin{align}
\label{LIMINF:E}
    \int_0^T E_K\big(\phi(t),\psi(t)\big)\, \rho(t) 
    \le \underset{k\to\infty}{\lim\inf}\;  \int_0^T E_K\big(\phi_k(t),\psi_k(t)\big) \, \rho(t) .
\end{align}
Proceeding similarly as above, we derive the convergences
\begin{alignat}{2}
    \label{CONV:NUPHI:SQ}
    \sqrt{\nu(\phi_k)}\, \D\bv_k &\to \sqrt{\nu(\phi)} \, \D\bv
    &&\quad\text{weakly in $L^2(Q;\R^{d\times d})$},
    \\
    \label{CONV:LAMPHI:SQ}
    \sqrt{\lambda(\phi_k)} \,\bv_k &\to \sqrt{\lambda(\phi)}\, \bv
    &&\quad\text{weakly in $L^2(Q;\R^d)$},
    \\
    \label{CONV:MPHI:SQ}
    \sqrt{M_\Omega(\phi_k)}\, \nabla\mu_k &\to \sqrt{M_\Omega(\phi)} \,\nabla\mu
    &&\quad\text{weakly in $L^2(Q;\R^d)$},
    \\
    \label{CONV:GV:SQ}
    \sqrt{\gamma(\psi_k)}\, \bv_k &\to \sqrt{\gamma(\psi)}\, \bv
    &&\quad\text{weakly in $L^2(\Sigma;\R^d)$},
    \\
    \label{CONV:MGPSI:SQ}
    \sqrt{M_\Gamma(\psi_k)}\, \nabla_\Gamma\theta_k &\to \sqrt{M_\Gamma(\psi)}\, \nabla_\Gamma\theta
    &&\quad\text{weakly in $L^2(\Sigma;\R^d)$}.
\end{alignat}
Hence, employing \eqref{EST:1}, \eqref{LIMINF:E} and weak lower semicontinuity, we eventually obtain
\begin{align*}
    &\int_0^T E_K\big(\phi(t),\psi(t)\big) \rho(t)
    \notag\\
    &\qquad
    +  \int_0^T  \iO \Big[
        2\nu(\phi) \abs{\D \bv}^2 \rho(t)
        +  \lambda(\phi) \abs{\bv}^2 \rho(t)
        +  M_\Omega(\phi) \abs{\nabla \mu}^2 \rho(t)
        \Big]
    \notag\\
    &\qquad
    +  \int_0^T  \iG \Big[ \gamma(\psi) \abs{\bv}^2 \rho(t)
        + M_\Gamma(\psi)\abs{\nabla_\Gamma \theta}^2 \rho(t) 
        \Big]
    \notag\\[1ex]
    &\quad
    \leq \underset{k\to\infty}{\liminf} \Bigg\{ \int_0^T E_K\big(\phi_k(t),\psi_k(t)\big) \rho(t)
    \notag\\
    &\qquad
    +  \int_0^T  \iO \Big[
        2\nu(\phi_k) \abs{\D \bv_k}^2 \rho(t)
        +  \lambda(\phi_k) \abs{\bv_k}^2 \rho(t)
        +  M_\Omega(\phi_k) \abs{\nabla \mu_k}^2 \rho(t)
        \Big]
    \notag\\
    &\qquad
    +  \int_0^T  \iG \Big[ \gamma(\psi_k) \abs{\bv_k}^2 \rho(t)
        + M_\Gamma(\psi_k)\abs{\nabla_\Gamma \theta_k}^2 \rho(t) 
        \Big]
    \Bigg\}
    \notag\\[1ex]
    &\quad
    \leq
    \underset{k\to\infty}{\lim} \int_0^T E_K\big(\phi_{0,k},\psi_{0,k}\big) \rho(t)
    = \int_0^T E_K\big(\phi_{0},\psi_{0}\big) \rho(t).
\end{align*}
Here, invoking the growth conditions from \ref{ass:pot:reg:1}, the equality in the last line follows by means of Lebesgue's general convergence theorem (see \cite[Section~3.25]{Alt}) since the orthogonal projections in \eqref{wf:ini:gal} converge strongly in $V$ and in $\VG$, respectively. As the nonnegative test function $\rho$ was arbitrary, this proves that the weak energy dissipation law stated in \eqref{DISSLAW:REG} holds for almost all $t\in[0,T]$. As the time integral in this inequality is continuous with respect to $t$ and since the mapping $t\mapsto E_K\big(\phi(t),\psi(t)\big)$ is lower semicontinuous, we conclude that \eqref{DISSLAW:REG} actually holds true for all $t\in[0,T]$. This means that Definition~\ref{DEF:WS:REG}\ref{DEF:WS:REG:iv} is verified.

We have thus shown that the quintuplet $(\bv,\phi,\mu,\psi,\theta)$ is a weak solution in the sense of Definition~\ref{DEF:WS:REG}.

\subsubsection{Additional regularity for the phase-fields} \label{SSSEC:REG}

It remains to prove the additional regularities. Without loss of generality, we merely consider the case $d=3$ 
as the case $d=2$ can be handled analogously but is even easier due to the better Sobolev embeddings in two dimensions. We deduce from \eqref{wf:4} written for the solution $(\bv, \phi, \mu, \psi, \theta)$ that there exists a null set $\mathcal N\subset[0,T]$ such that 
\begin{align}
    \label{REG:K:1*}
    &\iO \nabla \phi(t) \cdot \nabla \eta 
    + \iG \nabla_\Gamma \psi(t) \cdot \nabla_\Gamma \etaG 
    + \frac 1K \iG \big( \psi(t) - \phi(t) \big) (\etaG-\eta)
    \notag\\
    &\quad
    =  \iO \Big( \mu(t) - F'\big(\phi(t)\big) \Big)\, \eta 
      + \iG \Big( \theta(t) - G'\big(\psi(t)\big) \Big)\, \etaG
\end{align}
holds for all $t\in [0,T]\setminus\mathcal N$ and all test functions $(\eta,\etaG) \in \CV_0$. 

Let now $t\in [0,T]\setminus\mathcal N$ be arbitrary. We infer from \eqref{REG:K:1*} that the pair $\big(\phi(t),\psi(t)\big)$ is a weak solution of the bulk-surface elliptic problem 
\begin{subequations}
\begin{alignat}{2}
    - \Delta \phi(t) &= f(t)
    &&\quad\text{in $\Omega$},\\
    - \Delta_\Gamma \psi(t) + \deln \phi(t)&= g(t)
    &&\quad\text{on $\Gamma$},\\
    K \deln \phi(t)&= \psi(t) - \phi(t)
    &&\quad\text{on $\Gamma$},
\end{alignat}
\end{subequations}
where
\begin{align*}
    f(t) := \mu(t) - F'\big(\phi(t)\big)
    \quad\text{and}\quad
    g(t) := \theta(t) - G'\big(\psi(t)\big).
\end{align*}
For the definition of a weak solution to such bulk-surface elliptic problems, we refer to \cite[Definition~3.1]{knopf-liu}.

Let us first consider the case $\ell=2$.
As we assumed that the growth conditions in \ref{ass:pot:reg:1} are fulfilled with $p\le 4$, we have
\begin{align}
    \label{EST:F':H*}
    \norma{F'\big(\phi(t)\big)}_H &\le C + C\norma{\phi}_{L^6(\Omega)}^3 \le C,
    \\
    \label{EST:G':H*}
    \norma{G'\big(\psi(t)\big)}_{\HG} &\le C + C\norma{\psi}_{L^{2(q-1)}(\Gamma)}^{q-1} \le C.
\end{align}
Hence, applying regularity theory for elliptic problems with bulk-surface coupling (see \cite[Theorem~3.3]{knopf-liu}), we find that
$\big(\phi(t),\psi(t)\big) \in H^2(\Omega)\times H^2(\Gamma)$ with
\begin{align*}
    &\norma{\phi(t)}_{H^2(\Omega)}^2 + \norma{\psi(t)}_{H^2(\Gamma)}^2
    \le C \norma{f(t)}_{H}^2 + C\norma{g(t)}_{\HG}^2
    \notag\\
    &\quad \le C + C \norma{\mu(t)}_{H}^2 + C \norma{\theta(t)}_{\HG}^2.
\end{align*}
Since $\mu\in \L4{H}$ and $\theta\in \L4{\HG}$, this proves \eqref{reg:L4H2}.

We now consider the case $\ell=3$.
Recalling that the growth conditions in \ref{ass:pot:reg:1} are fulfilled with $p\le 4$, we use \eqref{EST:UNI:KN} to derive the estimates
\allowdisplaybreaks[0]
\begin{align*}
    \norma{F''\big(\phi(t)\big) \nabla\phi(t)}_\HH 
    &\le C\norma{\nabla\phi(t)}_{\HH} + C \norma{|\phi(t)|^2 \, \nabla\phi(t)}_{\HH}
    \\
    &\le C + C \norma{\phi(t)}_{L^6(\Omega)}^2 \, \norma{\nabla\phi(t)}_{{\LL^{6}(\Omega)}} 
    \\
    &\le C + C \norma{\phi(t)}_{H^2(\Omega)}
\end{align*}
\allowdisplaybreaks
and
\begin{align*}
    \norma{G''\big(\psi(t)\big) \nabla_\Gamma \psi(t)}_{\HH_\Gamma} 
    &\le C\norma{\nabla\psi(t)}_{\HH_\Gamma} + C \norma{|\psi(t)|^{q-2}\, \nabla\psi(t)}_{\HH_\Gamma}    
    \\
    &\le C + C \norma{\psi(t)}_{L^{{2(q-2)}}}^{q-2} \norma{\nabla\psi(t)}_{{\LL^{4}(\Gamma)}}    
    \\
    &\le C + C \norma{\psi(t)}_{H^2(\Gamma)}. 
\end{align*}
In combination with \eqref{EST:F':H*} and \eqref{EST:G':H*}, these estimates directly imply
\begin{align*}
    \norma{F'\big(\phi(t)\big)}_V \le C + C\norma{\phi(t)}_{H^2(\Omega)},
    \quad
    \norma{G'\big(\psi(t)\big)}_{\VG} \le C + C\norma{\psi(t)}_{H^2(\Gamma)}.
\end{align*}
Now, applying regularity theory for elliptic problems with bulk-surface coupling (see \cite[Theorem~3.3]{knopf-liu}), we infer
\begin{align*}
    &\norma{\phi(t)}_{H^3(\Omega)}^2 + \norma{\psi(t)}_{H^3(\Gamma)}^2
    \le C \norma{f(t)}_{V}^2 + C\norma{g(t)}_{\VG}^2
    \notag\\
    &\quad \le C + C \norma{\mu(t)}_{V}^2 + C \norma{\theta(t)}_{\VG}^2 + C\norma{\phi(t)}_{H^2(\Omega)}^2 + C\norma{\psi(t)}_{H^2(\Gamma)}^2.
\end{align*}
Recalling $\mu\in \L2{V}$, $\theta\in \L2{\VG}$ and that \eqref{reg:L4H2} with $\ell=2$ is already verified, this proves \eqref{reg:L2H3}. 
By means of Proposition~\ref{PROP:A}\ref{CR2}, we directly infer $(\phi,\psi) \in C^0([0,T];\CV_0)$, which proves \eqref{CH1:reg}. 

This means that all assertions are verified and thus, the proof is complete. 

\end{proof}

\subsection{The limit \texorpdfstring{$K\to0$}{} and existence of a weak solution in the case \texorpdfstring{$K=0$}{}}
\label{SUBSEC:EXREG:0}

\begin{proof}[Proof of Theorem~\ref{THM:Kto0}]
In this proof, the letter $C$ will denote generic positive constants that may depend on $\Omega$, $T$, the initial data and the constants introduced in \ref{ass:1:dom}--\ref{ass:4:viscosity}, but not on $\Kn$ or $n$. Such constants may also change their value from line to line.

First of all, as the initial data were prescribed as $(\phi_0,\psi_0)\in \CV_0$, they satisfy the Dirichlet type coupling condition $\phi_0\vert_{\Gamma} = \psi_0$ {\aeG}. In view of the definition of the energy functional in \eqref{DEF:E}, this means that the $\Kn$-depending term in the energy $E_\Kn(\phi_0,\psi_0)$ vanishes. It thus holds
\begin{align}
    \label{EST:EN}
    E_\Kn(\phi_0,\psi_0) = E_0(\phi_0,\psi_0) \le C
    \quad\text{for all $n\in\N$}.
\end{align}
According to Definition~\ref{DEF:WS:REG}\ref{DEF:WS:REG:iv}, the solutions $(\bv^\Kn, \phi^\Kn, \mu^\Kn, \psi^\Kn, \theta^\Kn)$ satisfy the weak energy dissipation law. By the definition of $E_\Kn$, we have
\begin{align*}
    &E_\Kn\big(\phi^\Kn(t),\psi^\Kn(t)\big) 
    + 2\int_0^t\iO \nu(\phi^\Kn)|\D\bv^\Kn|^2
    + \int_0^t\iO \lambda(\phi^\Kn) |\bv^\Kn|^2
    \\
    &\qquad 
    + \int_0^t\iG \gamma(\psi^\Kn) |\bv^\Kn|^2
    + \int_0^t\iO M_\Omega(\phi^\Kn)|\nabla \mu^\Kn|^2
    + \int_0^t\iG M_\Gamma(\psi^\Kn)|\nabla_\Gamma \theta^\Kn|^2
    \\[1ex]
    &\quad
    \le E_\Kn(\phi_0,\psi_0) \le C
\end{align*}
for all $t\in [0,T]$ and all $n\in{\N}$. 
In particular, recalling that the potentials $F$ and $G$ are nonnegative, this directly yields
\begin{align}
    \label{EST:PP:KN}
    \norma{\phi^\Kn - \psi^\Kn}_{\HG}^2 \le C\Kn\quad\text{for all $t\in [0,T]$ and all $n\in\N$.}
\end{align}
Testing \eqref{wf:2} and \eqref{wf:3} written for $(\bv^\Kn, \phi^\Kn, \mu^\Kn, \psi^\Kn, \theta^\Kn)$ by the constant functions $\frac{1}{|\Omega|}$ and $\frac{1}{|\Gamma|}$, respectively, we infer
\begin{align*}
    \meano{\phi^\Kn(t)} = \meano{\phi_0} 
    \quad\text{and}\quad
    \meang{\psi^\Kn(t)} = \meang{\psi_0} 
\end{align*}
for all $t\in[0,T]$ and all $n\in\N$. Hence, proceeding similarly as in the proof of Theorem~\ref{THM:WEAK} (Subsection~\ref{SSSEC:UNI}, First and Second estimates), we derive the uniform bound
\begin{align}
    \label{EST:UNI:KN:1}
    &\norma{\bv^\Kn}_{\L2 {\Vs} \cap \L2 \HHG}
    + \norma{\nabla\mu^\Kn}_{\L2 {\HH}}
	+ \norma{\nabla_\Gamma\theta^\Kn}_{\L2 {\HHG}}
    \notag\\[1ex] 
    &\quad + \norma{\phi^\Kn}_{H^1(0,T;{V^*}) \cap \L\infty {V}}
	+ \norma{\psi^\Kn}_{H^1(0,T;{\VG^*}) \cap \L\infty {\VG}}
	\leq C.
\end{align}
We now test \eqref{wf:4} written for $(\bv^\Kn, \phi^\Kn, \mu^\Kn, \psi^\Kn, \theta^\Kn)$ by $(\eta,0)$, where $\eta\in C^\infty_c(\Omega)$ is an arbitrary test function. Using \eqref{EST:UNI:KN:1} along with Hölder's inequality, we infer that
\begin{align}
\label{pier1}
    \abs{\iO \mu^\Kn \eta} 
    \le \norma{\phi^\Kn}_V \norma{\eta}_V + \norma{F'(\phi^\Kn)}_{L^{6/5}(\Omega)} \norma{\eta}_{L^{6}(\Omega)}
    \le C \norma{\eta}_V.
\end{align}
Fixing a function $\eta\in C^\infty_c(\Omega)$ with $\meano{\eta}\neq 0$, we deduce
$$ \norma{\mu^\Kn}_H \le C \big( 1 + \norma{\nabla \mu^\Kn}_\HH \big)$$
by means of a generalized Poincar\'e inequality (see, e.g., \cite[Lemma~B.63]{Ern}).
Hence, in combination with \eqref{pier1}, we conclude
\begin{align}
    \label{EST:MU:KN}
    \norma{\mu^\Kn}_{\L2 V} \le C .
\end{align}
In order to
derive an analogous estimate for $\theta^\Kn$, we first choose $\eta\equiv 1$ and $\etaG \equiv 0$ in \eqref{wf:4}. Employing \eqref{EST:UNI:KN:1}, we obtain
\begin{align}
    \label{EST:MEAN:KN}
    \abs{\frac 1{\Kn} \iG (\psi^\Kn - \phi^\Kn)} \le \norma{\mu^\Kn}_{L^1(\Omega)} + \norma{F'(\phi^\Kn)}_{L^1(\Omega)} 
    \le C \norma{\mu^\Kn}_{H} + C.
\end{align}
Let us now take $\eta\equiv 0$ and $\etaG \equiv 1$ in \eqref{wf:4}. Using \eqref{EST:UNI:KN:1} and \eqref{EST:MEAN:KN}, we deduce
\begin{align*}
    \abs{\iG \theta^\Kn} &\le \norma{G'(\psi^\Kn)}_{L^1(\Gamma)} + \abs{\frac 1{\Kn}\iG
    (\psi^\Kn - \phi^\Kn)} 
    \le C \norma{\mu^\Kn}_{H} + C.
\end{align*}
Employing Poincar\'e's inequality on $\Gamma$ (see Lemma~\ref{LEM:POIN}), we thus infer
\begin{align*}
    \norma{\theta^\Kn}_{\HG} \le C \norma{\nabla_\Gamma \theta^\Kn}_{\HHG} + C \norma{\mu^\Kn}_{H} + C.
\end{align*}
Squaring and integrating this estimate with respect to time over $[0,T]$, we eventually conclude
\begin{align}
    \label{EST:THETA:KN}
    \norma{\theta^\Kn}_{\L2\HG} \le C.
\end{align}
In summary, combining \eqref{EST:UNI:KN:1}, \eqref{EST:MU:KN} and \eqref{EST:THETA:KN}, we have thus shown
\begin{align}
    \label{EST:UNI:KN}
    &\norma{\bv^\Kn}_{\L2 {\Vs} \cap \L2 \HHG}
    + \norma{\mu^\Kn}_{\L2 {V}}
	+ \norma{\theta^\Kn}_{\L2 {\VG}}
    \notag\\[1ex] 
    &\quad + \norma{\phi^\Kn}_{H^1(0,T;{V^*}) \cap \L\infty {V}}
	+ \norma{\psi^\Kn}_{H^1(0,T;{\VG^*}) \cap \L\infty {\VG}}
	\leq C .
\end{align}

As in Subsection~\ref{SSSEC:CONV}, we deduce the existence of functions $(\bv^0, \phi^0, \mu^0, \psi^0, \theta^0)$ such that the convergences \eqref{CONV:V:K}--\eqref{CONV:THETA:K} hold along a non-relabeled subsequence. Moreover, the estimate \eqref{EST:PP:KN} directly implies \eqref{CONV:PP:K} and thus, all convergences in \eqref{CONV:K} are established. In particular, due to the trace theorem, we also have
\begin{align}
    \label{CONV:PP:G:KN}
    \phi^\Kn\vert_{\Gamma} - \psi^\Kn \;\to\; \phi^0\vert_{\Gamma} -\psi^0 \quad\text{strongly in $C^0([0,T];\HG)$}.
\end{align}
In combination with \eqref{EST:PP:KN}, this proves that $\phi^0\vert_\Gamma = \psi^0$ {\aeS} due to uniqueness of the limit.
Proceeding further as in Subsection~\ref{SSSEC:CONV}, we eventually show that the quintuplet $(\bv^0, \phi^0, \mu^0, \psi^0, \theta^0)$ is a weak solution of the Cahn--Hilliard--Brinkman system \eqref{CHB} in the sense of Definition~\ref{DEF:WS:REG}. 

It remains to verify the additional regularity assertions. 
Therefore, we proceed similarly as in Subsection~\ref{SSSEC:REG}.
Without loss of generality, we merely consider the case $d=3$. The case $d=2$ can be handled analogously but is even easier as the Sobolev embeddings in two dimensions are better. We infer from \eqref{wf:4} written for the solution $(\bv^0, \phi^0, \mu^0, \psi^0, \theta^0)$ and $K=0$ that there exists a null set $\mathcal N\subset[0,T]$ such that 
\begin{align}
    \label{REG:K:1}
    &\iO \nabla \phi^0(t) \cdot \nabla \eta 
    + \iG \nabla_\Gamma \psi^0(t) \cdot \nabla_\Gamma \etaG 
    \notag\\
    &\quad
    =  \iO \Big( \mu^0(t) - F'\big(\phi^0(t)\big) \Big)\, \eta 
      + \iG \Big( \theta^0(t) - G'\big(\psi^0(t)\big) \Big)\, \etaG
\end{align}
for all $t\in [0,T]\setminus\mathcal N$ and all test functions $(\eta,\etaG) \in \CV_0$. 

Let now $t\in [0,T]\setminus\mathcal N$ be arbitrary. We infer from \eqref{REG:K:1} that the pair $\big(\phi^0(t),\psi^0(t)\big)$ is a weak solution of the bulk-surface elliptic problem 
\begin{subequations}
\begin{alignat}{2}
    - \Delta \phi^0(t) &= f(t)
    &&\quad\text{in $\Omega$},\\
    - \Delta_\Gamma \psi^0(t) + \deln \phi^0(t)&= g(t)
    &&\quad\text{on $\Gamma$},\\
    \phi^0(t)\vert_\Gamma &= \psi^0(t)
    &&\quad\text{on $\Gamma$},
\end{alignat}
\end{subequations}
where
\begin{align*}
    f(t) := \mu^0(t) - F'\big(\phi^0(t)\big)
    \quad\text{and}\quad
    g(t) := \theta^0(t) - G'\big(\psi^0(t)\big).
\end{align*}

Let us first consider the case $\ell=2$.
Proceeding exactly as in Subsection~\ref{SSSEC:REG}, we deduce that $\big(\phi^0(t),\psi^0(t)\big) \in H^2(\Omega)\times H^2(\Gamma)$ with
\begin{align*}
    &\norma{\phi^0(t)}_{H^2(\Omega)}^2 + \norma{\psi^0(t)}_{H^2(\Gamma)}^2
    \le C \norma{f^0(t)}_{H}^2 + C\norma{g^0(t)}_{\HG}^2
    \notag\\
    &\quad \le C + C \norma{\mu^0(t)}_{H}^2 + C \norma{\theta^0(t)}_{\HG}^2
\end{align*}
thanks to regularity theory for elliptic problems with bulk-surface coupling (see \cite[Theorem~3.3]{knopf-liu}).
Since $\mu^0\in \L2{H}$ and $\theta^0\in \L2{\HG}$, this proves \eqref{reg:L4H2:0}.

We now consider the case $\ell=3$.
Proceeding analogously as in Subsection~\ref{SSSEC:REG}, we infer
\begin{align*}
    &\norma{\phi^0(t)}_{H^3(\Omega)}^2 + \norma{\psi^0(t)}_{H^3(\Gamma)}^2
    \le C \norma{f^0(t)}_{V}^2 + C\norma{g^0(t)}_{\VG}^2
    \notag\\
    &\quad \le C + C \norma{\mu^0(t)}_{V}^2 + C \norma{\theta^0(t)}_{\VG}^2 + C\norma{\phi^0(t)}_{H^2(\Omega)}^2 + C\norma{\psi^0(t)}_{H^2(\Gamma)}^2.
\end{align*}
by means of regularity theory for elliptic problems with bulk-surface coupling (see \cite[Theorem~3.3]{knopf-liu}).
Recalling $\mu^0\in \L2{V}$, $\theta^0\in \L2{\VG}$ and that \eqref{reg:L4H2:0} with $\ell=2$ is already verified, this proves \eqref{reg:L2H3:0}. 
With the help of Proposition~\ref{PROP:A}\ref{CR2}, we directly infer $(\phi^0,\psi^0) \in C^0([0,T];\CV_0)$. Moreover, via interpolation between $L^\infty(0,T;\CV_0)$ and $L^2(0,T;H^3(\Omega)\times H^3(\Gamma))$ (cf.~Lemma~\ref{LEM:INT}), we further get
\begin{equation*}
    (\phi^0,\psi^0) \in L^4(0,T;H^2(\Omega)\times H^2(\Gamma)).
\end{equation*}
This means that \eqref{CH1:reg*} is established. Eventually, a simple comparison argument based on \eqref{wf:4} yields \eqref{CH1:reg*:2}.

This means that all assertions are verified and thus, the proof is complete.
\end{proof}

\subsection{Uniqueness of the weak solution for regular potentials}
\label{SUBSEC:UNIREG}

In this subsection, we are going to prove Theorem \ref{THM:UQ} for regular potentials and $K\geq0$. To prove the theorem, we use some ideas devised in \cite{Giorgini-Knopf}.

\begin{proof}[Proof of Theorem \ref{THM:UQ}]
In this proof, the the letter $C$ will denote generic positive constants that may depend on $\Omega$, $T$, the initial data and the constants introduced in \ref{ass:1:dom}--\ref{ass:4:viscosity}. Such constants may also change their value from line to line.
We first introduce the following notation for the differences of the solution components:
\begin{align*}
    & \bv:= \bv_1-\bv_2,
    \quad 
    \phi:= \phi_1-\phi_2,
    \quad 
    \mu:= \mu_1-\mu_2,
    \quad 
    \psi:= \psi_1-\psi_2,
    \quad 
    \th:=\th_1-\th_2.
\end{align*}
Recall that $M, M_\Gamma$ and $\nu$ are assumed to be constant.
We thus infer that the quintuplet $(\bv,\phi,\mu,\psi,\theta)$
satisfies the variational formulation
\begin{subequations}
\begin{alignat}{2}
	& \non
    2 \nu \iO \D \bv : \D\w 
	+ \iO (\lambda(\phi_1)-\lambda(\phi_2)) \bv_1 \cdot \w 
	\\ & \qquad \non
    + \iO \lambda(\phi_2) \bv \cdot \w 
	+\iG (\gamma(\psi_1)-\gamma(\psi_2)) \bv_1 \cdot \w
    +  \iG \gamma(\psi_2) \bv \cdot \w 
	 \\ & \quad
	= 
	 - \iO ( \phi \nabla \mu_1 + \phi_2 \nabla \mu) \cdot \w
	 - \iG (\psi \nabla_\Gamma\th_1 + \psi_2\nabla_\Gamma \th) \cdot \w ,
	\label{wf:cd:1}
     \\
  	\label{wf:cd:2}
   & \<\delt \phi, \zeta>_{V}
	 - \iO (\ph \bv_1 + \phi_2 \bv) \cdot \nabla \zeta
	 + {M} \iO \nabla \mu \cdot \nabla \zeta 
	 =0 ,
	 \\
 	\label{wf:cd:3}
    & \<\delt \psi, \zetaG>_{\VG}
	 - \iG (\psi\bv_1 + \psi_2 \bv) \cdot \nabla_\Gamma \zetaG 
	 +  {M_\Gamma} \iG \nabla_\Gamma \theta \cdot \nabla_\Gamma \zetaG
	 =0
\end{alignat}
\end{subequations}
almost everywhere in $(0,T)$ 
for all test functions $\w \in \Vs$, $\zeta\in V$, and $\zetaG\in \VG$,
and the equations
\begin{alignat}{2}
	\label{mu:cd:strong}	
	\mu & = - \Delta \phi + F'(\ph_1)-F'(\ph_2)
	\quad && \text{ in $Q$,}
	\\
	\label{th:cd:strong}	
	\th & =- \Delta_\Gamma \psi +  G'(\psi_1) - G'(\psi_2) + \deln \phi	 
	\quad && \text{ on $\Sigma$}
\end{alignat}
are fulfilled in the strong sense due the higher regularities established in Theorem~\ref{THM:WEAK} and Theorem~\ref{THM:Kto0}.

We now test \eqref{wf:cd:1} by $\bv$, \eqref{wf:cd:2} by $\phi + \mu$, \eqref{wf:cd:3} by $\psi + \th$, and add the resulting equations. After some cancellations and rearrangements, we obtain
\begin{align}
    &  \non
    2\nu\norma{\D\bv}^2_{\mathbb{H}}
    + \iO \lambda(\phi_2)|{\bv}|^2
    + \iG \gamma(\psi_2) |{\bv}|^2
   + \<\delt \phi, \phi + \mu>_V
   \\ 
    & \qquad \non
   + {M}\norma{\nabla \mu}^2_{\HH}
     + \<\delt \psi, \psi + \th>_{\VG}
    + {M_\Gamma}\norma{\nabla_\Gamma \th}^2_{\HH_\Gamma}
    \\[1ex]
    & \quad \non
    =
    - \iO (\lambda(\ph_1)-\lambda(\ph_2))\bv_1 \cdot \bv
    - \iG (\gamma(\psi_1)-\gamma(\psi_2))\bv_1 \cdot \bv
     \\ & \qquad \non
     - \iO \phi \nabla \mu_1 \cdot \bv
	- \iG \psi \nabla_\Gamma \th_1 \cdot \bv
      \\ 
    & \qquad    \non
    + \iO (\phi \bv_1 + \phi_2 \bv) \cdot \nabla \phi
    + \iO \phi \bv_1 \cdot \nabla \mu
      \\ 
    & \qquad \non
       + \iG (\psi \bv_1 + \psi_2 \bv) \cdot \nabla_\Gamma \psi 
       + \iG \psi \bv_1 \cdot \nabla_\Gamma \th
    \\ 
    & \qquad  
    - {M} \iO \nabla \mu \cdot \nabla \phi
    - {M_\Gamma}\iG  \nabla_\Gamma \th \cdot \nabla_\Gamma \psi
    =: \sum_{i=1}^{10}I_i.
    \label{est:cd:main}
\end{align}
We point out that, as a consequence of Theorem~\ref{THM:WEAK} and Theorem~\ref{THM:Kto0}, it holds that  $(\ph_i,\psi_i) \in \L2 { (\Hx3 \times H^3(\Gamma))\cap \CV_K}$, $i=1,2$.
Next, by using \eqref{mu:cd:strong} and \eqref{th:cd:strong}, along with the chain rule formula in Proposition \ref{PROP:A}, we observe that the duality terms on the left-hand side can be reformulated as
\begin{align}
    &  \non\<\delt \ph, \phi + \mu>_V + \<\delt \psi, \psi + \th>_{\VG}
    \\ &\non\quad  = 
    \frac 12 \frac{\mathrm d}{\mathrm dt} \Big({\norma{\phi}^2_H+\norma{\psi}^2_{\HG}}\Big)
    +\< (\delt \phi,\delt \psi),
    \big( - \Delta \phi, 
    - \Delta_\Gamma \psi + \deln \phi 	
    \big)
    >_{\CV}
    \\ &\non\qquad 
    +\< \delt \phi, F'(\phi_1)-F'(\phi_2) >_{V}
    +\<\delt \psi, G'(\psi_1)-G'(\psi_2) >_{\VG}
    \\ &\quad \non =
    \frac 12 \frac{\mathrm d}{\mathrm dt}  \Big( {\norma{\phi}^2_V+\norma{\psi}^2_{\VG}} + \sigma(K)\norma{\psi- \phi}^2_{\HG}\Big)
    \\ &  \qquad
    \label{dual:term}
    +\< \delt \phi, F'(\phi_1)-F'(\phi_2) >_{V}
    +\<\delt \psi, G'(\psi_1)-G'(\psi_2) >_{\VG}.
\end{align}
Using \ref{ass:3:mobility} and \ref{ass:4:viscosity} as well as \eqref{dual:term}, we deduce from \eqref{est:cd:main} that
\begin{align*}
    &\frac 12 \frac{\mathrm d}{\mathrm dt}\Big( \norma{\phi}_V^2+ \norma{\psi}_{\VG}^2 + \sigma(K)\norma{\psi- \phi}^2_{\HG}\Big)
    \\ 
    &\quad + 2\nu \norma{\D \bv}^2_{\mathbb{H}}
    + \lambda_1 \norma{\bv}_{\HH}^2
    + \gamma_1 \norma{\bv}_{\HH_\Gamma}^2
    + {M}\norma{\nabla \mu}^2_{\HH}
    + {M_\Gamma}\norma{\nabla_\Gamma \th}^2_{\HH_\Gamma}
    \\ & \qquad 
    \le -\< \delt \phi, F'(\phi_1)-F'(\phi_2) >_{V}
    -\<\delt \psi, G'(\psi_1)-G'(\psi_2) >_{\VG}
    + \sum_{i=1}^{10}I_i .
\end{align*}
We now intend to control the terms $I_i$, $i=1,...,10$, by means of \Holder's inequality, Young's inequality, the Lipschitz continuity of $\lambda$ and $\gamma$, and integration by parts along with Sobolev's embeddings and the trace theorem. 
For a positive $\delta$ yet to be chosen, we derive the following estimates:
\begin{align*}
    {I_1 }
    & \leq
    C\norma{\ph}_{{\Lx 4}}\norma{\bv_1}_{{\LL^ 4(\Omega)}}\norma{\bv}_{{\HH}}
    \leq \d \norma{\bv}^2_{\HH}
    + \cd \norma{\bv_1}^2_{\VVV}\norma{\ph}_V^2,
    \\[1ex]
    {I_2} & \leq 
    C\norma{\psi}_{{\LGx 4}}\norma{\bv_1}_{{\LL^4(\Gamma)}}\norma{\bv}_{{\HHG}}
    \leq \delta \norma{\bv}^2_{\VV}
    + \cd \norma{\bv_1}^2_{\VV}\norma{\psi}^2_{\VG},
    \\[1ex]
    {I_3 +I_4}
    & 
    =
   \iO \mu_1 \nabla \ph \cdot \bv 
   	- \iG \psi \nabla_\Gamma \th_1 \cdot \bv
    \\
    & 
    {\leq 
     \norma{\mu_1}_{{\Lx 4}}\norma{\nabla \phi}_{{\HH}} \norma{\bv}_{{\LL^4(\Omega)}}
     +
     \norma{\psi}_{{\LGx 4}}\norma{\nabla_\Gamma \th_1}_{{\HHG}}\norma{\bv}_{{\LL^4(\Gamma)}}}
    \\ &  
    {\leq 
    {C} \norma{\mu_1}_V\norma{\phi}_{V} \norma{\bv}_{\VVV}
    +C
 \norma{\th_1}_{\VG}\norma{\psi}_{\VG}\norma{\bv}_{\bf V}}
    \\ & 
   {\leq 
   2\delta \norma{\bv}^2_{\bf V}
    + \cd \norma{\mu_1}^2_V\norma{\phi}^2_V
    + \cd \norma{\th_1}_{\VG}^2\norma{\psi}_{\VG}^2,}
   \\[1ex]
   {I_5 + I_6} & 
  \leq
   (\norma{\phi}_{{\Lx 4}}\norma{\bv_1}_{{\LL^4(\Omega)}} + 
    \norma{\phi_2}_{{\Lx 4}}\norma{\bv}_{{\LL^4(\Omega)}} )\norma{\nabla \phi}_{{\HH}}
    \\ & \quad
    + \norma{\phi}_{\Lx4 }\norma{\bv_1}_{{\LL^4(\Omega)}}\norma{\nabla \mu}_{\HH}
    \\ & 
     \leq \frac M4 \norma{\nabla \mu}_{\HH}^2 
     +\delta \norma{\bv}_{\VVV}^2
    +  (C\norma{\bv_1}_{\VV}+C\norma{\bv_1}_{\VV}^2+\cd\norma{\phi_2}_V^2) \norma{\phi}_V^2,
    \\[1ex]
     {I_7 + I_8} & 
    \leq
    {(\norma{\psi}_{{\LGx 4}}\norma{\bv_1}_{{\LL^4(\Gamma)}} + \norma{\psi_2}_{{\LGx 4}}\norma{\bv}_{{\LL^4(\Gamma)}})\norma{\nabla_\Gamma \psi}_{{\HHG}}}
    \\ & \quad 
    + \norma{\psi}_{\LGx4 }\norma{\bv_1}_{{\LL^4(\Gamma)}}\norma{\nabla_\Gamma \th}_{\HHG}
    \\ & 
     {\leq \frac {M_\Gamma}4 \norma{\nabla_\Gamma \th}_{\HHG}^2 
     +\delta \norma{\bv}_{\VVV}^2
    + (C\norma{\bv_1}_{\VV} +C\norma{\bv_1}_{\VV}^2 + \cd \norma{\psi_2}_{\VG}^2 )
    \norma{\psi}_{\VG}^2,}
    \\[1ex]
    {I_9+ I_{10} }
    & \leq
    {\frac {{M}}4 \norma{\nabla \mu}^2_{\HH}
    + \frac {{M_\Gamma}}4 \norma{\nabla_\Gamma \th}^2_{\HH_\Gamma}
    + C \norma{\nabla \phi}^2_{\HH}
    + C \norma{\nabla_\Gamma \psi}^2_{\HH_\Gamma}.}
\end{align*}
Furthermore, the terms in the last line of \eqref{dual:term} can be estimated by
\begin{align}
	& \non
	\big| \< \delt \phi, F'(\phi_1)-F'(\phi_2)>_{V} \big|
	+ \big| \< \delt \psi, G'(\psi_1)-G'(\psi_2)>_{\VG} \big|
	\\ & \quad \label{duality:est}
	\leq 
    \norma{\delt \phi}_{V^*}
        \norma{F'(\phi_1)-F'(\phi_2)}_V 
    + \norma{\delt \psi}_{\VG^*} 
        \norma{G'(\psi_1)-G'(\psi_2)}_{\VG} .
\end{align}
By means of a comparison argument in \eqref{wf:cd:2}, we obtain
\begin{align}
     \non
   \norma{\delt \phi}_{V^*} 
     &= \sup_{\norma{\zeta}_{{V}}\leq1} |
    \<\delt \phi, \zeta>_{V} |
    \\ &  \non
    \leq C (
    \norma{\phi}_{{\Lx 4}}\norma{\bv_1}_{{\LL^4(\Omega)}}
    +\norma{\phi_2}_{{\Lx 4}}\norma{\bv}_{{\LL^4(\Omega)}}
    +\norma{\nabla \mu}_{\HH})
    \\ &  \non
    \leq C (
    \norma{\phi}_{V}\norma{\bv_1}_{{\VV}}
    +\norma{\phi_2}_{{V}}\norma{\bv}_{{\VV}}
    +\norma{\nabla \mu}_{\HH}).
\end{align}
Similarly, using  \eqref{wf:cd:3}, we derive the estimate
\begin{align*}
    \norma{\delt \psi}_{\VG^*} 
     = \sup_{\norma{\zeta_\Gamma}_{{\VG}}\leq1} |
    \<\delt \psi, \zeta_\Gamma>_{\VG} |
    \leq C (
    \norma{\psi}_{\VG}\norma{\bv_1}_{{\VV}}
    +\norma{\psi_2}_{{\VG}}\norma{\bv}_{{\VV}}
    +\norma{\nabla_\Gamma \th}_{\HH_\Gamma}).
\end{align*}
By employing equation \eqref{wf:cd:2} as well as \ref{ass:pot:reg*}, we can bound the norm on the \rhs\ of \eqref{duality:est} as follows:
\begin{align}
    \label{EST:F':DIFF}
    & \norma{F'(\phi_1)-F'(\phi_2)}_V ^2
    = \norma{F'(\phi_1)-F'(\phi_2)}_H^2 + \norma{F''(\phi_1)\nabla\phi_1-F''(\phi_2)\nabla\phi_2}_{\HH}^2
    \notag\\[1ex] 
    & \quad 
    = \iO |F'(\phi_1)-F'(\phi_2)|^2 
    + \iO |F''(\phi_1) \nabla \phi|^2
    + \iO |F''(\phi_1)-F''(\phi_2)|^2 |\nabla \phi_2|^2
    \notag\\[1ex]
    & \quad
    \leq \iO \Big|\int_0^1 F'' \big(s \phi_1 + (1-s)\phi_2 \big) \ds\Big|^2 \phi^2 
    \; + \; C(\norma{\phi_1}_{{\Lx{\infty}}}^{2(p-2)}+1) \norma{\nabla \phi}^2_{\HH}
    \notag\\ & \qquad  
    +  \iO \Big|\int_0^1 F^{(3)} \big(s \phi_1 + (1-s)\phi_2 \big) \ds\Big|^2 \phi^2 \, |\nabla \phi_2|^2
    \notag\\[1ex]
    & \quad
    \leq C (\norma{\phi_1}^{2(p-2)}_{{\Lx{3(p-2)}}}+\norma{\phi_2}^{2(p-2)}_{{\Lx{3(p-2)}}}+1)\norma{\phi}_V^2
    + C(\norma{\phi_1}_{{\Lx \infty}}^{2(p-2)}+1) \norma{\phi}^2_{V}
    \notag\\ & \qquad  
    +  C (\norma{\phi_1}^{2(p-3)}_{{\Lx{12(p-3)}}}+\norma{\phi_2}^{2(p-3)}_{{\Lx{12(p-3)}}}+1)
    \norma{\phi_2}_{W^{1,4}(\Omega)}^2 \norma{\phi}_V^2.
\end{align}
We now recall the restrictions on $p$ and $q$ demanded in \eqref{EXP*}. In particular, we have $p\le 4$ if $d=3$. In the case $d=2$ we assume, without loss of generality, that $p\ge 5$.
Using Agmon's inequality as well as interpolation between Sobolev spaces (see Lemma~\ref{LEM:INT}), we derive the estimates
\begin{align*}
    &\norma{\phi_1}_{{\Lx \infty}}^{2(p-2)} 
    \le C \norma{\phi_1}_{H^{s}(\Omega)}^{2(p-2)}
    \le C \norma{\phi_1}_{H^1(\Omega)}^{2p-8} \norma{\phi_1}_{H^2(\Omega)}^{4}
    \le C \norma{\phi_1}_{H^2(\Omega)}^{4}
    &&\text{for $d=2$},
    \\[1ex]
    &\norma{\phi_1}_{{\Lx \infty}}^{2(p-2)} 
    \le C \norma{\phi_1}_{H^1(\Omega)}^{(p-2)} \norma{\phi_1}_{H^2(\Omega)}^{(p-2)}
    \le C \norma{\phi_1}_{H^2(\Omega)}^{4}
    &&\text{for $d=3$},
    \\[1ex]
    &\norma{\phi_2}_{W^{1,4}(\Omega)}^2
    \le C \norma{\phi_2}_{H^1(\Omega)}^{\frac 12} \norma{\phi_2}_{H^2(\Omega)}^{\frac 32}
    \le C \norma{\phi_2}_{H^2(\Omega)}^{\frac 32}
    &&\text{for $d=2,3$},
\end{align*}
where, in the first inequality, $s=\frac{2p}{2(p-2)}\in (1,2)$.
We thus infer from \eqref{EST:F':DIFF} that
\begin{align*}
    \norma{F'(\phi_1)-F'(\phi_2)}_V ^2
    \le C \Lambda \norma{\phi}_V^2
\end{align*}
with a the time-dependent function $\Lambda$ that is given by 
\begin{align*}
    \Lambda &:= 
    (1 + \norma{\phi_1}^{4}_{H^2{(\Omega)}})
    + \big( 1 + \norma{\phi_2}^{\frac 32}_{H^2{(\Omega)}} \big)
    \sum_{i=1,2} \big(1 
        + \norma{\phi_i}^{2(p-2)}_{{\Lx{3(p-2)}}}
        + \norma{\phi_i}^{2(p-3)}_{{\Lx{12(p-3)}}}
    \big).
\end{align*}
From \eqref{reg:L4H2} and \eqref{CH1:reg*}, we know that $\phi_2\in \L4 {\Hx2}$.

In the case $d=2$, we simply have 
\begin{align*}
    \phi_i \in \L{\infty} {\Lx{3(p-2)}} \cap \L{\infty} {\Lx{12(p-3)}}, \quad i=1,2,
\end{align*}
due to the Sobolev embedding $H^1(\Omega) \emb L^r(\Omega)$ for all $r\in (1,\infty)$.

In the case $d=3$, we use interpolation between Sobolev spaces (Lemma~\ref{LEM:INT}) along with the continuous embedding $H^{(4+\rho)/\rho}(\Omega) \emb L^{6\rho/(\rho-8)}(\Omega)$ to derive the estimate
\begin{align*}
    \int_0^T \norm{u(t)}_{L^{6\rho/(\rho-8)}(\Omega)}^\rho 
    &\le C \int_0^T \norm{u(t)}_{H^{(4+\rho)/\rho}(\Omega)}^\rho
    \le C \int_0^T \norm{u(t)}_{H^{3}(\Omega)}^2 \norm{u(t)}_{H^{1}(\Omega)}^{\rho-2}
    \\
    &\le  C \norm{u}_{L^2(0,T;H^{3}(\Omega))}^2 \norm{u}_{L^\infty(0,T;H^{1}(\Omega))}^{\rho-2}
\end{align*}
for any $\rho>8$ and any function $u\in \L\infty {\Hx1} \cap \L2 {\Hx3}$.
This proves the continuous embedding
\begin{align}
    \label{EMBED}
    \L\infty {\Hx1} \cap \L2 {\Hx3} \emb \L {\rho} {\Lx {\frac{6\rho}{\rho-8}}} \quad \text{for any $\rho > 8$.}
\end{align}%
Since $\phi_i\in \L\infty {\Hx1} \cap \L2 {\Hx3}$, $i=1,2$, we infer
\begin{align*}
    \phi_i \in \L{16} {\Lx{12}} , \quad i=1,2,
\end{align*}
by choosing $\rho=16$ in \eqref{EMBED}.

In summary, by means of \Holder's inequality, we conclude
\begin{align*}
    t \mapsto {\Lambda(t)} \in L^{1}(0,T) \quad\text{for $d=2,3$.}
\end{align*}

Arguing in a similar fashion, and
recalling \eqref{Ass:G'''} as well as the regularity in Theorem~\ref{THM:WEAK}, we find that 
\begin{align*}
    &  \norma{G'(\psi_1)-G'(\psi_2)}_{\VG} ^2
    \\[1ex] 
    &  \quad 
    \leq 
    (\norma{\psi_1}^{2(q-2)}_{\VG}
    + \norma{\psi_2}^{2(q-2)}_{\VG} + 1) \norma{\psi}_{\VG}^2
    + C  (\norma{\psi_1}_{L^\infty(\Gamma)}^{2(q-2)}+1) \norma{\psi}_{\VG}^2
    \\ & \qquad 
    + C \big( 1+ \norma{\psi_1}_{\VG}^{{2(q-3)}}
    + \norma{\psi_2}_{\VG}^{2(q-3)}\big) 
    \norma{\psi_2}_{H^2(\Gamma)}^2 \norma{\psi}^2_{\VG}
    \\[1ex] 
    &  \quad
    \leq C (\norma{\psi_1}_{L^\infty(\Gamma)}^{2(q-2)} 
        + \norma{\psi_2}_{H^2(\Gamma)}^2
        + 1) \norma{\psi}_{\VG}^2 .
\end{align*}
In view of \eqref{EXP*}, we assume, without loss of generality, that $q\ge 5$.
Recalling that the boundary $\Gamma$ is a $(d-1)$-dimensional submanifold of $\R^d$ with $d\in\{2,3\}$, we have $H^s(\Gamma)\emb L^\infty(\Gamma)$ for every $s>1$.
Hence, via interpolation between Sobolev spaces (see Lemma~\ref{LEM:INT}) we obtain the estimate
\begin{align*}
    \norma{\psi_1}_{L^\infty(\Gamma)}^{2(q-2)} 
    \le C \norma{\psi_1}_{H^{s}(\Gamma)}^{2(q-2)} 
    \le C \norma{\psi_1}_{\VG}^{2q-8} \norma{\psi_1}_{H^2(\Gamma)}^{4}
    \le C \norma{\psi_1}_{H^2(\Gamma)}^{4},
\end{align*}
where $s=\frac{2q}{2(q-2)} \in (1,2)$.
We thus conclude that
\begin{align*}
    \norma{G'(\psi_1)-G'(\psi_2)}_{\VG} ^2 
    \leq C \Theta \norma{\psi}_{\VG}^2 
\end{align*}
with a time-dependent function $\Theta$ that is given by
\begin{align*}
    t \mapsto \Theta (t) :=  C (1 + \norma{\psi_1(t)}^4_{H^2(\Gamma)} + \norma{\psi_2(t)}^2_{H^2(\Gamma)}) \in L^1(0,T).
\end{align*}

Therefore, upon collecting the above computations, the integral in \eqref{duality:est} can be estimated with the help of Young' inequality as
\begin{align*}
    &- \< \delt \phi, F'(\phi_1)-F'(\phi_2)>_{V}
	- \< \delt \psi, G'(\psi_1)-G'(\psi_2)>_{\VG}
	\\ & \quad 
    \leq 
    C \norma{\delt \phi}_{V^*}
        \norma{F'(\phi_1)-F'(\phi_2)}_V 
    + C \norma{\delt \psi}_{\VG^*} 
        \norma{G'(\psi_1)-G'(\psi_2)}_{\VG}
    \\ 
    & \quad \leq 
    \delta (\norma{\delt \phi}_{V^*}^2 + \norma{\delt \psi}_{\VG^*}^2)
    +
    \cd (\norma{F'(\phi_1)-F'(\phi_2)}_V^2
    +\norma{G'(\psi_1)-G'(\psi_2)}_{\VG}^2)
    \\ &  \quad 
    \leq 
    \delta C (\norma{\nabla \mu}^2_{\HH}+\norma{\nabla_\Gamma \th}^2_{\HHG})
    + \delta C\norma{\phi_2}_{\L\infty V}^2\norma{\bv}_{\VVV}^2
    + \cd (\norma{\bv_1}_{\VVV}^2 + \Lambda)  \norma{\phi}_V^2
    \\ & \qquad 
    + \delta C \norma{\psi_2}_{\L\infty {\VG}}^2\norma{\bv}_{\VVV}^2
    + \cd (\norma{\bv_1}_{\VVV}^2 + \Theta)  \norma{\psi}_{\VG}^2
\end{align*}
for a constant $\delta>0$ yet to be chosen.
Finally, we adjust $\delta \in (0,1)$ in such a way that 
\begin{align*}
    \delta \max \Big\{4, C , C \norma{\phi_2}_{\L\infty V}^2, C \norma{\psi_2}_{\L\infty {\VG}}^2\Big\} \leq \frac 14 \min \Big\{ {{M}}, {{M_\Gamma}}, {C_\star(\nu,\gamma_1)}\Big\}.
\end{align*}
Here, the constant $C_\star(\nu,\gamma_1)$ results from Korn's inequality (see Lemma~\ref{LEM:KORN}) and is chosen such that $2\nu \norma{\D \bv}^2_{\mathbb{H}} +  \gamma_1 \norma{\bv}^2_{\HH_\Gamma} \geq C_\star(\nu,\gamma_1) \norma{\bv}^2_{\VV}$.
Thus, we integrate over time and employ Gronwall's lemma to deduce that 
\begin{align*} 
    & \non \norma{\bv_1-\bv_2}_{\L2 {\VV}}
    + \norma{\phi_1-\phi_2}_{\L\infty V}
    + \norma{\nabla\mu_1-\nabla\mu_2}_{\L2 \HH}
    \\ & \qquad 
     + \norma{\psi_1-\psi_2}_{\L\infty \VG}
   + \norma{\nabla_\Gamma\th_1-\nabla_\Gamma\th_2}_{\L2 \HHG}
     \\ & \quad 
   \leq C (\norma{\phi_{0,1}-\phi_{0,2}}_V + \norma{\psi_{0,1}-\psi_{0,2}}_{\VG} ).
\end{align*}
Finally, by a comparison argument in \eqref{mu:cd:strong} and \eqref{th:cd:strong}, we infer that $(\mu,\theta)$ is bounded in $\L2 {\CH}$ by the same \rhs\ as the above inequality. This leads to \eqref{est:CD} and thus, the proof is complete.
\end{proof}

\section{Analysis of the Cahn--Hilliard--Brinkman system with singular potentials}
\label{SEC:POTSING}

We are now dealing with the proof of the existence of weak solutions for singular potentials.
Our strategy is to approximate the convex parts of the singular potentials $F$ and $G$ satisfying \ref{ass:1:pot} and \ref{ass:2:pot:dominance} by means of a Moreau--Yosida regularization. In this way, the approximate potentials are regular and exhibit quadratic growth and we can thus use Theorem~\ref{THM:WEAK} and Theorem~\ref{THM:Kto0} to obtain suitable approximate solutions. We then derive uniform estimates with respect to the approximation parameter, and eventually pass to the limit.
In the forthcoming analysis, the splitting $F'=\beta + \pi$ and $G'=\betaG + \pi_\Gamma$ from \ref{ass:1:pot} will be adopted.

\subsection{Yosida regularizations}
As mentioned, we rely on a Yosida regularization acting on the graphs $\beta$ and $\betaG$. For any $\eps \in (0,1)$, we approximate the maximal monotone graphs $\beta$ and $\betaG$ by
\begin{align*}
    \beps(r) & : = \frac 1{\eps} \Big(r - \big(I + \eps \beta\big)^{-1} (r)\Big),
    \quad 
    \beta_{\Gamma,\eps}(r): =\frac 1{\eps} \Big(r - \big(I + \eps \betaG\big)^{-1} (r)\Big),
    \quad r \in \R.
\end{align*}
It is well-known that $\beta_\eps$ and $\beta_{\Gamma,\eps}$ are single-valued and can be interpreted as monotone functions $\beta_\eps:\R\to\R$ and $\beta_{\Gamma,\eps}:\R\to\R$. Moreover,
the condition in \eqref{domination} implies that 
\begin{equation}
	\bigl |\beta_\varepsilon (r)\bigr | 
	\le \kappa_1 \bigl |\beta _{\Gamma,\varepsilon } (r)\bigr |+ \kappa_2
	\quad 
	\text{for all $r \in \mathbb{R}$ and all $\varepsilon \in (0,1)$}
	\label{pier3}
\end{equation} 
(see, e.g., \cite[Appendix]{CF6}), where $\kappa_1$ and $\kappa_2$ are the constants introduced in \eqref{domination}.
Next, we define $F_\eps  := \hat \beps + \hat \pi,$
$ G_\eps := \hat \beta_{\Gamma,\eps} + \hat \pi_\Gamma,$
where
\begin{align*}
    \hat \beps (r) := \int_0^r \beps (s) \,{\rm ds},
    \quad 
    \hat \beta_{\Gamma,\eps} (r) := \int_0^r \beta_{\Gamma,\eps} (s) \, {\rm ds},
    \quad r \in \R
\end{align*}
are actually the Moreau--Yosida regularizations of the singular parts $\hat \b$ and $\hat \b_\Gamma$ of the potentials $F$ and $G$. Now,
it is well-known that for every $r\in\R$,
\begin{subequations}
\label{pier6}
\begin{alignat}{4}
    \label{pier6-1}
    &0 \leq \hat \beps(r) \leq \hat \beta(r) 
    &&\quad \forall \eps\in(0,1),
    \quad 
    &\hat \beps(r) &
    \nearrow \hat \beta(r) 
    &&\quad\text{monotonically as $\eps \to 0,$}
    \\
    \label{pier6-2}
    &|\beps(r)| \leq |\beta^\circ(r)| 
    &&\quad  \forall \eps\in(0,1),
    \quad 
    &\beps(r) &\to \beta^\circ(r)
    &&\quad\text{as $\eps \to 0$.}
\end{alignat}
\end{subequations}
Analogous properties hold for $\beta_{\Gamma,\eps}$.
Moreover, owing to the growth condition \eqref{pier2}, $\hat \beps$ fulfills the following growth condition:
\begin{align} 
\begin{aligned}
  & \hbox{For every $M>0$ there exist $C_M>0$ and $\eps_M \in (0,1)$ such that}
  \\
  & \hat \beps (r) 
  \geq M \, r^2 - C_M
  \quad \hbox{for every $r\in\R$ and every $\eps\in(0,\eps_M)$}\,.
  \label{pier4}
\end{aligned}
\end{align}
This property is checked in detail in the paper \cite[beginning of~Section~3]{CGSS1}. Obviously, as a consequence, a similar condition holds for $\hat \beta_{\Gamma,\eps}$ since \eqref{pier3} entails that
\begin{align}
    \hat \beps(r) 
    \leq 
    \kappa_1 \hat \beta_{\Gamma,\eps} (r)
    + \kappa_2 |r|
    \quad \text{for every $r \in \R, \ \eps \in (0,1)$},
    \label{pier5}
\end{align}
thanks to $\beps(0)=\beta_{\Gamma,\eps}(0)=0$
and since $\hat \beps$ and $\hat \beta_{\Gamma,\eps}$ have the same sign. 
Due to their construction by the Yosida approximation, $\beta_\eps$ and $\beta_{\Gamma,\eps}$ are Lipschitz continuous and have at most linear growth. Hence, $\hat \beta_\eps$ and $\hat \beta_{\Gamma,\eps}$ have at most quadratic growth.

In the following, we assume that $\eps\in (0,\eps_1)$, where $\eps_1$ is given by \eqref{pier4} with $M=1$.
Then, \eqref{pier4} with $M=1$ and \eqref{pier5} along with the (at most) quadratic growth of $\hat \pi$ and ${\hat \pi}_\Gamma$ (cf.~\ref{ass:1:pot}), imply that both $F_\eps$ and $G_\eps$ are bounded from below by negative constants independent of $\eps\in (0,\eps_1)$. We can thus assume, without loss of generality, that $F_\eps$ and $G_\eps$ are nonnegative (otherwise, we add the modulus of their lower bounds to $\hat\pi$ or $\hat\pi_\Gamma$, repsectively). 
In summary, this entails that the approximate potentials $F_\eps$ and $G_\eps$ satisfy assumption \ref{ass:pot:reg:1} with $p=q=2$.

Now, the approximating system we aim to solve consists of \eqref{wf:1*}--\eqref{wf:4*} with $\beta=\beps$ and $\betaG=\beta_{\Gamma,\eps}$.
The regularity of the approximate potentials, in particular, implies that the inclusions $\xi_{\eps} \in \beps(\phi_{\eps})$ $\aeQ$ and $\xi_{\Gamma,\eps} \in \beta_{\Gamma,\eps}(\psi_{\eps})$ $\aeS$  turn into the identities $\xi_\eps = \beps(\phi_\eps)$ \aeQ\ and  $\xi_{\Gamma,\eps} = \beta_{\Gamma,\eps}(\psi_\eps)$ \aeS, respectively.

Therefore, as an immediate consequence of Theorem~\ref{THM:WEAK} and Theorem~\ref{THM:Kto0}, we obtain the following existence result.
\begin{corollary}\label{COR:EXISTENCE:APPROX}
Let $K \geq 0$, suppose that \ref{ass:1:dom}--\ref{ass:4:viscosity} hold, and let $(\phi_0,\psi_0) \in \CV_K$ be arbitrary initial data.
Then, for every $\eps\in (0,1)$, the approximate problem described above admits at least a weak solution $(\bv_\eps, \phi_\eps, \mu_\eps, \psi_\eps, \theta_\eps)$ in the sense of Definition~\ref{DEF:WS:REG}
with 
\begin{align*}
    \xi_\eps 
    := \beps(\phi_\eps) & \in \L\infty V, 
    \\
    \xi_{\Gamma, \eps} 
    := \beta_{\Gamma ,\eps}(\psi_\eps) & \in \L\infty {V_\Gamma} .
\end{align*}
Moreover, if  the domain $\Omega$ is of class $C^2$, it additionally holds
\begin{align*}
    (\phi_\eps,\psi_\eps) &\in
    L^2\big(0,T; H^2(\Omega) \times H^2(\Gamma) \big) , 
\end{align*}
and the equations \eqref{eq:4pier}--\eqref{eq:6pier} are fulfilled in the strong sense by
$\phi_\eps$, $\xi_\eps$, $\mu_\eps$, $\psi_\eps$, $\xi_{\Gamma,\eps}$, and $\theta_\eps$.
\end{corollary}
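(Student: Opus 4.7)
The approach is a direct reduction to the regular-potential case: for each $\eps \in (0,1)$ I treat the Moreau--Yosida regularized potentials $F_\eps = \hat\beta_\eps + \hat\pi$ and $G_\eps = \hat\beta_{\Gamma,\eps} + \hat\pi_\Gamma$ as admissible regular potentials and invoke Theorem~\ref{THM:WEAK} if $K>0$ and Theorem~\ref{THM:Kto0} if $K=0$ to produce the quintuplet $(\bv_\eps,\phi_\eps,\mu_\eps,\psi_\eps,\theta_\eps)$.

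To apply those theorems I first verify that $F_\eps$ and $G_\eps$ fulfill \ref{ass:pot:reg:1} with exponents $p = q = 2$. Since $\beta_\eps$ and $\beta_{\Gamma,\eps}$ are $\eps^{-1}$-Lipschitz with $\beta_\eps(0)=\beta_{\Gamma,\eps}(0)=0$, we have $|\beta_\eps(r)| \le \eps^{-1}|r|$ and $|\beta_{\Gamma,\eps}(r)| \le \eps^{-1}|r|$, and combining this with the Lipschitz continuity of $\pi$ and $\pi_\Gamma$ granted by \ref{ass:1:pot} yields the linear growth bounds \eqref{Ass:F'} and \eqref{Ass:G'} with $\eps$-dependent constants. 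Nonnegativity of $F_\eps$ and $G_\eps$ was already arranged in the discussion preceding the corollary by adding suitable constants to $\hat\pi$ and $\hat\pi_\Gamma$. Since $(\phi_0,\psi_0) \in \CV_K$ and \ref{ass:1:dom}--\ref{ass:4:viscosity} are assumed, the hypotheses of Theorem~\ref{THM:WEAK} (respectively Theorem~\ref{THM:Kto0}) are met, and I directly obtain a weak solution in the sense of Definition~\ref{DEF:WS:REG}.

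Next I would upgrade the integrability of the multipliers. Because $\phi_\eps \in L^\infty(0,T;V)$ and $\beta_\eps$ is globally Lipschitz with $\beta_\eps(0)=0$, the chain rule for Lipschitz compositions gives
\begin{align*}
    |\xi_\eps(x,t)| \le \tfrac{1}{\eps}\,|\phi_\eps(x,t)|,
    \qquad
    |\nabla \xi_\eps(x,t)| \le \tfrac{1}{\eps}\,|\nabla \phi_\eps(x,t)|
    \quad \text{\aeQ},
\end{align*}
so that $\xi_\eps := \beta_\eps(\phi_\eps) \in L^\infty(0,T;V)$. The analogous computation on $\Gamma$, using $\psi_\eps \in L^\infty(0,T;V_\Gamma)$ together with the Lipschitz continuity of $\beta_{\Gamma,\eps}$, yields $\xi_{\Gamma,\eps} := \beta_{\Gamma,\eps}(\psi_\eps) \in L^\infty(0,T;V_\Gamma)$. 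Note that neither of these bounds is uniform in $\eps$, but at this stage of the proof this is irrelevant.

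Finally, when $\Omega$ is of class $C^2$, I would invoke the higher regularity assertion in Theorem~\ref{THM:WEAK} with $\ell = 2$, respectively the statement \eqref{reg:L4H2:0} in Theorem~\ref{THM:Kto0}. The additional hypothesis $p \le 4$ in the three-dimensional setting is satisfied trivially since $p = 2$. These results provide $(\phi_\eps,\psi_\eps) \in L^2(0,T;H^2(\Omega) \times H^2(\Gamma))$ and the strong formulation of \eqref{eq:4pier}--\eqref{eq:6pier}. There is no genuine obstacle in this corollary: it is essentially a bookkeeping exercise verifying that $F_\eps,G_\eps$ meet the hypotheses of the previously established existence theorems; the only point to watch is that all $\eps$-dependent constants are still finite, which is exactly why the Yosida regularization is used.
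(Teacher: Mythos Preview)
Your proposal is correct and follows exactly the approach the paper intends: the paper states the corollary as ``an immediate consequence of Theorem~\ref{THM:WEAK} and Theorem~\ref{THM:Kto0}'' without further elaboration, and your argument spells out precisely why this is so. The verification that $F_\eps,G_\eps$ satisfy \ref{ass:pot:reg:1} with $p=q=2$, the Lipschitz-composition argument for $\xi_\eps\in L^\infty(0,T;V)$ and $\xi_{\Gamma,\eps}\in L^\infty(0,T;V_\Gamma)$, and the invocation of the $C^2$ regularity statements are all the natural steps filling in this routine reduction.
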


\subsection{Uniform estimates}
This section is devoted to derive estimates, uniform with respect to $\eps$, on the approximate solutions $(\bv_\eps, \phi_\eps, \xi_\eps, \mu_\eps, \psi_\eps, \xi_{\Gamma,\eps}, \theta_\eps)$. Those will be a key point to obtain suitable convergence properties that allow us to pass to the limit as $\eps \to 0$ later on. 
In the following, the letter $C$ will denote generic positive constants that may depend on $\Omega$, $T$, the initial data and the constants introduced in \ref{ass:1:dom}--\ref{ass:4:viscosity}, but not on $\eps$. These constants may also change their value from line to line.

\step First estimate

To begin with, we  test  \eqref{wf:2} by $\frac 1 {|\Omega|}$ and \eqref{wf:3} by $\frac 1{|\Gamma|}$ to infer that mass conservation for both 
$\phi_\eps$ and $\psi_\eps$ holds as claimed in \eqref{mass:cons:bulk}--\eqref{mass:cons:bd}. Recalling \eqref{def:mean} and \eqref{initial:data:weak} we have 
\begin{equation}
\label{pier7}
\<\phi_\eps (t) >_\Omega = \<\phi_0>_\Omega =\mz , \quad 
\<\psi_\eps(t) >_\Gamma =\<\psi_0>_\Gamma=: \mgz \quad \hbox{ for all }\, t\in [0,T]. 
\end{equation}
This property is intrinsically independent of $\eps$.

We now consider the weak energy dissipation law, already proved in the cases of regular potentials, to $(\bv_\eps, \phi_\eps, \mu_\eps, \psi_\eps, \theta_\eps)$, which reads as
\begin{align}
\label{pier9a.1}
	& 	
	\frac 12\norma{\nabla \phi_\eps(t)}^2_{\HH}
	+ \iO F_\eps(\phi_\eps(t))
	+ \frac 12\norma{\nabla_\Gamma \psi_\eps(t)}^2_{\HHG}
	\notag \\ & 
	 \qquad  
	 + \iG G_\eps(\psi_\eps(t))
	+ \frac{\sigma(K)}2 \norma{(\psi_\eps - \phi_\eps)(t)}^2_{\HG}
	 \notag \\ & 
	 \qquad 
	 + 2 \int_0^t\!\!\iO \nu(\phi_\eps)|\D\bv_\eps|^2
	 + \int_0^t\!\!\iO\lambda(\phi_\eps)|\bv_\eps|^2
	 + \int_0^t\!\!\iG \gamma(\psi_\eps)|\bv_\eps|^2
	 \notag \\ & 
	 \qquad 
  + \int_0^t\!\!\iO M_\Omega(\phi_\eps) |\nabla \mu_\eps|^2
	 + \int_0^t\!\!\iG M_\Gamma(\psi_\eps)|\nabla_\Gamma \theta_\eps|^2
	 \notag \\ & 
	 \quad\leq 
	 	\frac 12\norma{\nabla \phi_0}^2_{{\HH}}
	+ \iO F_\eps(\phi_0)
	+ \frac 12\norma{\nabla_\Gamma \psi_0}^2_{\HHG}
	+ \iG G_\eps(\psi_0)
	+ \frac{\sigma(K)}2 \norma{\psi_0 - \phi_0}^2_{\HG}
\end{align}
for all $t\in [0,T]$. Now, recalling that $F_\eps$ and $G_\eps$ satisfy assumption \ref{ass:pot:reg:1} with $p=q=2$, we observe that
\begin{equation}
    \label{pier9a.2}
	\frac 12\norma{\nabla \phi_0}^2_{\HH}
	+ \iO F_\eps(\phi_0)
	+ \frac 12\norma{\nabla_\Gamma \psi_0}^2_{\HHG}
	+ \iG G_\eps(\psi_0)
	+ \frac{\sigma(K)}2 \norma{\psi_0 - \phi_0}^2_{\HG}
	\leq C
\end{equation}
since $(\phi_0,\psi_0)\in \CV_K$ satisfies \eqref{initial:data:weak} and \eqref{pier6-1} holds.
At this point, we recall that the potentials $F_\eps$ and $G_\eps$ were assumed (without loss of generality) to be nonnegative.
Hence, in view of \ref{ass:3:mobility} and \ref{ass:4:viscosity} and thanks to \eqref{pier7} and the Poincar\'e--Wirtinger inequality in $\Omega$ and Poincar\'e's inequality on $\Gamma$ (see~Lemma~\ref{LEM:POIN}), it is not difficult to infer that 
\begin{align}
\label{pier9a}
	& \norma{\phi_\eps}_{\L\infty V}
	+ \| F_\eps(\phi_\eps)\|_{\L\infty {\Lx1}}
	+ \norma{\psi_\eps}_{\L\infty {\VG}}
	+ \|G_{\eps}(\psi_\eps)\|_{\L\infty {L^1(\Gamma)}}
	\notag \\
	& \quad 
	+ \norma{\bv_\eps}_{\L2 {{\bf V}} \cap \L2 {\HH_\Gamma}}
	+ \norma{\nabla \mu_\eps}_{\L2 {\HH}}
	+ \norma{\nabla_\Gamma \theta_\eps}_{\L2 {\HH_\Gamma}}
	\leq C.
\end{align}

\step Second estimate 

We proceed as in the derivation of \eqref{EST:UNI:7} and \eqref{EST:UNI:8} in the proof of Theorem~\ref{THM:WEAK}. Indeed, let us
take an arbitrary test function $\zeta \in L^2(0,T;V) $ in \eqref{wf:2}, then integrate over time and use \Holder's inequality to obtain that
\begin{align*}
	\Big|
        \ioT  \<\delt \phi_\eps, \zeta>_V
    \Big|
	&\leq 
	C \ioT (\norma{\phi_\eps}_{\Lx6}\norma{\bv_\eps}_{\LL^3(\Omega)} + M_2\norma{\nabla \mu_\eps}_{\HHH})\norma{\nabla \zeta}_{\HHH}
	\\ &  
    \leq 
    C \left(\norma{\phi_\eps}_{\L\infty {V}}\norma{\bv_\eps}_{\L2 {{\bf V}}}
	+  \norma{\nabla\mu_\eps}_{\L2 \HHH} \right)
    \norma{\zeta}_{\L2 V}
     \notag \\
    &
    \le
     C \norma{\zeta}_{\L2 V}.
\end{align*}
Taking the supremum over all $\zeta\in \L2 V$ with $\norma{\zeta}_{\L2 V} \leq 1$, we infer
\begin{align}
	\label{pier9b}
	\norma{\delt \phi_\eps}_{\L2 {\Vp}}\leq C .
\end{align}
The same argument, acting on equation \eqref{wf:3}, leads us to infer as well that 
\begin{align}
	\label{pier9c}
	\norma{\delt \psi_\eps}_{\L2 {V^*_\Gamma}}\leq C .
\end{align}

\step Third estimate 

To handle the cases $K>0$ and $K=0$ simultaneously, we introduce the following notation:
\begin{align}
    \label{DEF:ALPHA}\qquad
    \alpha(K) : = 
    \begin{cases}
        0  &\text{if $K>0$},\\
        1 &\text{if $K=0$}.
    \end{cases}
\end{align}  
We now test \eqref{wf:4} by the pair 
\begin{align*}
    (\eta,\etaG) = 
    \begin{cases}
        (\phi_\eps - \mz , \psi_\eps - \mz) &\text{if $K=0$},\\
        (\phi_\eps - \mz , \psi_\eps - \mgz) &\text{if $K>0$},
    \end{cases}
\end{align*}
which clearly belongs to $\CV_K$.
After some rearrangements, as well as adding and subtracting the constant $m_{\Gamma0}$ multiple times in the case $K=0$, we deduce
\begin{align}
\label{pier8}
	& \norma{\nabla \phi_\eps}^2_{\HH}
	 + \iO \b_\eps (\phi_\eps) (\phi_\eps - \mz)
	 + \norma{\nabla_\Gamma \psi_\eps}^2_{\HH_\Gamma}
	 + \iG \b_{\Gamma,\eps} (\psi_\eps) (\psi_\eps - \mgz)
	 \notag \\ & \quad 
	 =
	 \iO (\mu_\eps -  \<\mu_\eps>_{\Omega} ) (\phi_\eps - \mz)
	+ \iG (\theta_\eps -  \<\th_\eps>_{\Gamma} ) (\psi_\eps - \mgz)
	\notag\\ & \qquad 
	 + \sigma(K) \iG (\psi_\eps - \phi_\eps) \big(\phi_\eps -\psi_\eps - (\mz - \mgz) \big)
	 \notag\\ & \qquad 
	 -\iO \pi (\phi_\eps) (\phi_\eps - \mz)
	 - \iG \pi_{\Gamma} (\psi_\eps) (\psi_\eps - \mgz)
	 \notag\\
	 & \qquad 
	 {}+ \alpha (K) \iG ( G'_\eps(\psi_\eps)- \theta_\eps) (\mz - \mgz ).
\end{align}
Note that the subtracted mean values $\<\mu_\eps>_{\Omega}$ and $\<\th_\eps>_{\Gamma}$ in the first two summands on the right-hand side of \eqref{pier8} do not change the values of these integrals since, due to \eqref{pier7}, we have $\<\phi_\eps - \mz>_{\Omega} = 0$ and 
$\<\psi_\eps - \mgz>_{\Gamma} = 0$.

To deal with the terms on the \lhs\ of \eqref{pier8}, we recall that due to assumption \eqref{initial:data:weak}, $\mz$ and $\mgz$ lie in the interior of the domains $D(\beta)$ and $D(\beta_\Gamma)$, respectively.
We can thus exploit a useful property (see, e.g.,  \cite[Appendix, Prop.\ A.1]{MZ} and/or the detailed proof given in \cite[p.\ 908]{GMS2009}), namely there exist positive constants $c_1,c_2$ and a nonnegative constant $c_3$ such that
\begin{align} \label{mz:proof}
	&c_1 \norma{\b_\eps(\phi_\eps)}_{\Lx1}
	+ c_2 \norma{\b_{\Gamma,\eps}(\psi_\eps)}_{\LGx1} - c_3
	\notag \\
	&\quad \leq 
	 \iO \b_\eps (\phi_\eps) (\phi_\eps - \mz)
	+ \iG \b_{\Gamma,\eps} (\psi_\eps) (\psi_\eps - \mgz).
\end{align}
For the integrals in the second line of \eqref{pier8}, we employ H\"older's inequality along with the Poincar\'e--Wirtinger inequality in $\Omega$ and Poincar\'e's inequality on $\Gamma$ (see~Lemma~\ref{LEM:POIN}) to obtain that 
\begin{align}
\label{pier8a}
    &  \iO (\mu_\eps -  \<\mu_\eps>_{\Omega} ) (\phi_\eps - \mz)
	+ \iG (\theta_\eps -  \<\th_\eps>_{\Gamma} ) (\psi_\eps - \mgz)
	\notag \\ & \quad 
    \leq C \Bigl( \norma{\nabla \mu_\eps}_{\HH} \norma{\nabla  \phi_\eps}_{\HH} +  \norma{\nabla_\Gamma \th_\eps}_{\HH_\Gamma} \norma{\nabla_\Gamma  \psi_\eps}_{\HH_\Gamma} \Bigr).
\end{align}
Moreover, integrals in the third and the fourth line of \eqref{pier8} can be bounded by virtue of estimate \eqref{pier9a} as well as the Lipschitz continuity of $\pi$ and $ \pi_\Gamma$, so that
\begin{align*}
	& 
	\sigma(K) \iG (\psi_\eps - \phi_\eps) \big(\phi_\eps -\psi_\eps - (\mz - \mgz) \big)
	 \notag\\ 
	 &\quad
   -\iO \pi (\phi_\eps) (\phi_\eps - \mz)
	 - \iG \pi_{\Gamma} (\psi_\eps) (\psi_\eps - \mgz) 
    \leq 
	C\bigl(\norma{\phi_\eps }^2_H+\norma{\psi_\eps }^2_{\HG} +1\bigr). 
\end{align*}
It remains to estimate the integral in the last line of \eqref{pier8}, which is only present in the case $K=0$. Recall that if $K=0$, we assumed $\Omega$ to be of class $C^2$. Hence, 
we know from Corollary~\ref{COR:EXISTENCE:APPROX} that $(\phi_\eps,\psi_\eps) \in L^2\big(0,T; H^2(\Omega) \times H^2(\Gamma) \big)$, and that the equations
\begin{align}
\label{pier10}
 \mu_\eps &= - \Delta \phi_\eps +  F'_\eps(\phi_\eps)	
    &&\text{a.e. in $Q$},
 \\
\label{pier11}
\theta_\eps &= - \Delta_\Gamma \psi_\eps  - G'_\eps (\psi_\eps) + \deln \phi_\eps 	
	&&\text{a.e. on $\Sigma$}
\end{align}
hold in the strong sense.
Then, with the help of \eqref{pier11} and a simple integration by parts, it is not difficult to conclude that 
\begin{align}
	 &\alpha (K) \iG ( G'_\eps(\psi_\eps)- \theta_\eps) (\mz - \mgz ) 
	 \nonumber \\
	 &\quad{} = - \alpha (K) \iG \deln \phi_\eps (\mz - \mgz )
 \leq 
C\, \alpha(K) \norma{\deln \phi_\eps}_{\HG}.
    \label{pier8aa}
\end{align} 

In the following, we write $\Phi_\eps$ to denote generic nonnegative functions 
\begin{align}
    t\mapsto \Phi_\eps(t) \in L^2(0,T) 
    \qquad\text{with}\qquad
    \norma{\Phi_\eps}_{L^2(0,T)} \le C
    \quad\text{for all $\eps>0$}
\end{align}
i.e., the $L^2$-norm is bounded uniformly in $\eps$. Here, ``generic'' means that the explicit definition of the function $\Phi_\eps$ may vary throughout this proof.

All in all, collecting the inequlities \eqref{pier8}--\eqref{pier8a} and \eqref{pier8aa}, we conclude that 
\begin{align} 
\label{pier8b}
	&\norma{\b_\eps(\phi_\eps(t))}_{\Lx1} +
	\norma{\b_{\Gamma,\eps}(\psi_\eps(t))}_{\LGx1} \leq 
	 \Phi_\eps(t) + C\, \alpha(K)  \norma{\deln \phi_\eps (t)}_{\HG} 
\end{align}
for almost all $t\in (0,T)$.
Having shown \eqref{pier8b}, now we aim to prove additional $L^2$-bounds for the terms $\b_\eps(\phi_\eps)$ and
$\b_{\Gamma,\eps}(\psi_\eps)$. For that, we take advantage of the growth condition \eqref{pier3}, which follows from \eqref{domination} in \ref{ass:2:pot:dominance}. However, the related analysis has to be performed differently for the cases $K> 0$ and $K= 0$.

\step Further estimate in the case $K>0$

As $\alpha (K)=0$ in this case, \eqref{pier8b} yields 
\begin{align}
	\label{pier12}
	\norma{\b_\eps(\phi_\eps)}_{\L2 {\Lx1}}
	+\norma{\b_{\Gamma,\eps}(\psi_\eps)}_{\L2 {L^1(\Gamma)}}
	\leq C .
\end{align}
Of course, thanks to \eqref{pier9a} we also have
\begin{equation*}
    \norma{\pi(\phi_\eps)}_{\L2 {\Lx1}}
	+\norma{\pi_\Gamma(\psi_\eps)}_{\L2 {L^1(\Gamma)}} \leq C 
\end{equation*}
since $\pi$ and $ \pi_\Gamma$ are Lipschitz continuous. 
In combination with \eqref{pier12}, this entails
\begin{equation*}
    \norma{F'_\eps(\phi_\eps)}_{\L2 {\Lx1}}
	+\norma{G'_{\eps}(\psi_\eps)}_{\L2 {L^1(\Gamma)}} \leq C . 
\end{equation*}%
Consequently, by testing \eqref{wf:4} first by $(1,0)$ and then by $(0,1)$, one easily realizes that
\begin{align}
	\label{mean:chemicals}
	\norma{\<\mu_\eps>_\Omega}_{L^2(0,T)}
	+ \norma{\<\theta_\eps>_\Gamma}_{L^2(0,T)}
	\leq  C ,
\end{align}
whence, using \eqref{pier9a}, the Poincar\'e--Wirtinger inequality in $\Omega$ and Poincar\'e's inequality on $\Gamma$ (see Lemma~\ref{LEM:POIN}), we infer that
\begin{align}
	\label{chemical:L2V}
	\norma{\mu_\eps}_{\L2 V}
	+ \norma{\theta_\eps}_{\L2 {\VG}}
	\leq C .
\end{align}
Next, recalling that $\sigma(K)=\frac 1K$, we test \eqref{wf:4} by $(0,\b_{\Gamma,\eps}(\psi_\eps))$ obtaining 
\begin{align*}
	 & \norma{\b_{\Gamma,\eps} (\psi_\eps)}^2_{\HG}
	 +\iG \b'_{\Gamma,\eps}(\psi_\eps)|\nabla_\Gamma \psi_\eps|^2
	\\ & \quad 
	= 
	 \iG (\theta_\eps 
	 - \pi_\Gamma (\psi_\eps))  \b_{\Gamma,\eps}(\psi_\eps)
 -\frac 1K \iG (\psi_\eps - \phi_\eps)  \b_{\Gamma,\eps}(\psi_\eps).
\end{align*}
Observe now that the second term on the \lhs\ is nonnegative due to the monotonicity of $\b_{\Gamma,\eps} $. 
For the terms on the right-hand side, we use \Holder's inequality, Young's inequality and the trace theorem to infer  that
\begin{align*}
	 & \iG (\theta_\eps 
	 - \pi_\Gamma (\psi_\eps))  \b_{\Gamma,\eps}(\psi_\eps)
 -\frac 1K \iG (\psi_\eps - \phi_\eps)  \b_{\Gamma,\eps}(\psi_\eps).
	 \\ & \quad 
	 \leq 
	\frac 12 \norma{\b_{\Gamma,\eps} (\psi_\eps)}^2_{\HG}
	+  C \bigl( \norma{\theta_\eps}^2_{\HG} + \norma{\psi_\eps}^2_{\HG} + \norma{\phi_\eps}^2_V + 1\bigr).
\end{align*}
Hence, rearranging the terms and integrating over time we conclude that
\begin{align}
	\label{bG:L2}
	 \norma{\b_{\Gamma,\eps} (\psi_\eps)}_{\L2 {\HG}}
	\leq C.
\end{align}
Next, proceeding similarly, we test \eqref{wf:4} by $(\b_\eps(\phi_\eps),0)$. This leads us to 
\begin{align*}
	\norma{ \b_\eps(\phi_\eps) }^2_H
	+ \iO \b'_\eps(\phi_\eps)|\nabla \phi_\eps |^2
	 = \iO (\mu_\eps - \pi(\phi_\eps))\b_\eps(\phi_\eps)
	 +\frac 1K \iG (\psi_\eps - \phi_\eps) \b_\eps(\phi_\eps).
\end{align*}
Again, the second term on the \lhs\ is nonnegative owing to \ref{ass:1:pot}, whereas the first term on the right can be easily controlled by Young's inequality as 
\begin{align*}
\iO (\mu_\eps - \pi(\phi_\eps))\b_\eps(\phi_\eps)
	 \leq 
	\frac 12 \norma{ \b_\eps(\phi) }^2_H 
	+ C \bigl(\norma{\mu_\eps}^2_H+\norma{\phi_\eps}^2_H +1 \bigr).
\end{align*}
Besides, we handle the last term by combining the monotonicity of $\b_\eps$ with the property in \eqref{pier3}. Namely, it holds that
\begin{align*}
	&\frac 1K \iG (\psi_\eps - \phi_\eps) \b_\eps(\phi_\eps)
	\\ & \quad 
	=  - \frac 1K\iG (\phi_\eps - \psi_\eps) (\b_\eps(\phi_\eps) - \b_\eps(\psi_\eps)) 
	+\frac 1K \iG (\psi_\eps - \phi_\eps) \b_\eps(\psi_\eps) 
	\\ & \quad 
	\leq \frac  {1}K \iG |\psi_\eps- \phi_\eps|\, |\b_{\eps}(\psi_\eps)|
	\\ & \quad 
	\leq \frac  {\kappa_1}K \iG (|\psi_\eps|+|\phi_\eps|)|\b_{\Gamma,\eps}(\psi_\eps)|
	+\frac  {\kappa_2}K \iG (|\ps_\eps|+|\phi_\eps|)
	\\[1ex] & \quad 
	\leq 
	\norma{ \b_{\Gamma,\eps}(\psi_\eps) }^2_{\HG}
	+ C \bigl(\norma{\psi_\eps}^2_{\HG} + \norma{\phi_\eps}^2_V+1 \bigr).
\end{align*}
Hence, with the help of~\eqref{bG:L2}, this shows the corresponding estimate
\begin{align}
	\label{b:L2}
	 \norma{\b_{\eps} (\phi_\eps)}_{\L2 H}
	\leq C.
\end{align}

\step Further estimate in the case $K=0$

Recall that, as $K=0$, it now holds that $\sigma(K)=0$, $\alpha(K)=1$, and $\phi_\eps\vert_\Gamma = \psi_\eps$ a.e.~on $\Sigma$, along with \eqref{pier10} and \eqref{pier11}. Here, in our argumentation, we follow in parts the procedure devised in \cite{Colli2020}.

Multiplying \eqref{pier10} by $1/|\Omega|$ and integrating over $\Omega$, we find that 
\begin{align}
	\label{pier13}
	|\<\mu_\eps>_\Omega | \leq  C \norma{\deln \phi_\eps}_{\HG} + 
		\frac1{|\Omega|} \bigl(\norma{\b_\eps(\phi_\eps)}_{\Lx 1} +
	\norma{\pi(\phi_\eps)}_{\Lx 1} \bigr) .
\end{align}
Similarly, multiplying \eqref{pier11} by $1/|\Gamma|$ and integrating over $\Gamma$, we infer that 
\begin{align}
	\label{pier14}
	|\<\th_\eps>_\Gamma | \leq  C \norma{\deln \phi_\eps}_{\HG} + 
		\frac1{|\Gamma|} \bigl(\norma{\b_{\Gamma, \eps}(\psi_\eps)}_{\LGx 1} +
	\norma{\pi_\Gamma (\psi_\eps)}_{\LGx 1} \bigr) .
\end{align}
Then, combining \eqref{pier13} and \eqref{pier14}, on account of the estimates \eqref{pier9a} and \eqref{pier8b}
along with the Lipschitz continuity of $\pi$ and $ \pi_\Gamma$, we deduce that 
\begin{align}
	\label{pier15}
|\<\mu_\eps>_\Omega | + |\<\th_\eps>_\Gamma | \leq C\bigl( \Phi_\eps + \norma{\deln \phi_\eps }_{\HG} \bigr).
\end{align}
Combining \eqref{pier9a.1} and \eqref{pier9a.2}, we obtain the estimate
\begin{align*}
 \norma{\nabla \mu_\eps}_{{\HH}}
+ \norma{\nabla_\Gamma \theta_\eps}_{{\HH_\Gamma}} \le \Phi_\eps.
\end{align*}
Hence, with the help of the Poincar\'e--Wirtinger inequality in $\Omega$ and Poincar\'e's inequality on $\Gamma$ (see~Lemma~\ref{LEM:POIN}), we arrive at 
\begin{align} 
\label{pier16}
	&\norma{\mu_\eps(t)}_V +
	\norma{\theta_\eps(t)}_{V_\Gamma} \leq 
	 C\bigl( \Phi_\eps(t) + \norma{\deln \phi_\eps (t)}_{\HG} \bigr) 
\end{align}
for almost all $t\in (0,T)$.
Now, we multiply \eqref{pier10} by $\b_\eps(\phi_\eps)$ and integrate by parts. This yields
\begin{align}
\label{pier17}
	&\norma{ \b_\eps(\phi_\eps) }^2_H
	+ \iO \b'_\eps(\phi_\eps)|\nabla \phi_\eps |^2
	\notag\\
	&\quad  = \iO (\mu_\eps - \pi(\phi_\eps))\b_\eps(\phi_\eps)
	 + \iG \deln \phi_\eps \b_\eps(\phi_\eps).
	\notag\\
	&\quad  \leq  \frac12 \iO | \mu_\eps - \pi(\phi_\eps))|^2 + \frac12 \norma{ \b_\eps(\phi_\eps) }^2_H 
	 + \iG \deln \phi_\eps \b_\eps(\phi_\eps). 
\end{align}
Similarly, multiplying \eqref{pier11} by $- \b_{\Gamma,\eps} (\psi_\eps)$, it is straightforward to deduce that
\begin{align}
\label{pier18}
	 & \norma{\b_{\Gamma,\eps} (\psi_\eps)}^2_{\HG}
	 +\iG \b'_{\Gamma,\eps}(\psi_\eps)|\nabla_\Gamma \psi_\eps|^2
	\notag\\ 
	& \quad 
	= 
	 \iG (\theta_\eps 
	 - \pi_\Gamma (\psi_\eps))  \b_{\Gamma,\eps}(\psi_\eps) 
     - \iG \deln \phi_\eps \b_{\Gamma,\eps}(\psi_\eps)
     \notag\\
	&\quad  \leq   \iO |\theta_\eps 
	 - \pi_\Gamma (\psi_\eps)|^2 + \frac14 \norma{\b_{\Gamma,\eps}(\psi_\eps) }^2_{\HG} 
	 - \iG \deln \phi_\eps \b_{\Gamma,\eps}(\psi_\eps) . 
\end{align}
Recalling \eqref{pier3}, we observe that
\begin{align*}
	&  
   \biggl| \iG \deln \phi_\eps \b_\eps(\phi_\eps) -  \iG \deln \phi_\eps \b_{\Gamma,\eps}(\psi_\eps) \biggr|
   \\
   &\quad{}
    \leq    
   \norma{\deln \phi_\eps }_{\HG} \norma{ (\kappa_1 +1) |\b_{\Gamma,\eps}(\psi_\eps)| +\kappa_2 }_{\HG}
  \\
  &\quad{}
    \leq
     \frac14 \norma{\b_{\Gamma,\eps}(\psi_\eps) }^2_{\HG} + C \bigl(\norma{\deln \phi_\eps }_{\HG}^2 + 1\bigr).
\end{align*}
Hence, adding \eqref{pier17} and \eqref{pier18}, and using \eqref{pier9a} as well as \eqref{pier16}, we conclude that 
\begin{align} 
\label{pier19}
	&\norma{\b_\eps(\phi_\eps(t))}_H +
	\norma{\b_{\Gamma,\eps}(\psi_\eps(t))}_{\HG}\leq 
	 C\bigl( \Phi_\eps(t) + \norma{\deln \phi_\eps (t)}_{\HG} \bigr)  
\end{align}
for almost all $t\in (0,T)$.
Now, recalling again \eqref{pier10} and \eqref{pier11}, we observe that $\phi_\eps  $ solves the following bulk-surface elliptic problem: 
\begin{alignat*}{2}
    - \Delta \phi_\eps &= \mu_\eps - \b_\eps(\phi_\eps) -  \pi (\phi_\eps)	
    	&& \quad\text{in $\Omega$}, \\
     - \Delta_\Gamma \psi_\eps  + \deln \phi_\eps &=  \theta_\eps  - \b_{\Gamma,\eps}(\psi_\eps) - \pi_\Gamma (\psi_\eps)
    	&& \quad \text{on $\Gamma$},\\
    \phi_\eps|_\Gamma &= \psi_\eps 	
    	&& \quad \text{on $\Gamma$}
\end{alignat*}
a.e.~in $(0,T)$. 
Due to \eqref{pier9a}, \eqref{pier16}, \eqref{pier19} and the Lipschitz continuity of $\pi$ and $\pi_\Gamma$, it is clear that the right-hand sides in the above system belong to $L^2(\Omega)$ and $L^2(\Gamma)$, respectively.
Hence, applying regularity theory for elliptic problems with bulk-surface coupling (see \cite[Theorem~3.3]{knopf-liu}), we deduce that the estimate
\begin{align*}
    \begin{aligned}
	&\norma{ \phi_\eps }_{H^2(\Omega)}
	+
	\norma{\psi_\eps }_{H^2(\Gamma)}
	 \\
	 &\quad{} \le
	C \Bigl( 
	\norma{\mu_\eps - \b_\eps(\phi_\eps) -  \pi (\phi_\eps)}_H 
	 + 
	 \norma{\theta_\eps  - \b_{\Gamma,\eps}(\psi_\eps) + \psi_\eps- \pi_\gamma (\psi_\eps)}_{\HG}
	\Bigl) 
    \end{aligned}
\end{align*} 
holds a.e.~in $(0,T)$.
Now, in view of \eqref{pier9a}, \eqref{pier16}, \eqref{pier19} we can completely control the above \rhs\ and infer that 
\begin{align} 
\label{pier20}
	&\norma{ \phi_\eps (t) }_{H^2(\Omega)}
	+
	\norma{\psi_\eps(t) }_{H^2(\Gamma)}\leq 
	 C\bigl( \Phi_\eps(t) + \norma{\deln \phi_\eps (t)}_{\HG} \bigr) 
\end{align}
for almost all $t\in (0,T)$. On this basis, at this point we can use the standard trace theorem for the normal derivative concluding that, for some fixed $3/2<s<2$ there is a positive constant $C_s$ such that
$$ \norma{\deln \phi_\eps (t)}_{\HG} \leq C_s \norma{ \phi_\eps (t) }_{H^s(\Omega)}$$
for almost all $t\in (0,T)$. Hence, as $H^2 (\Omega) \subset H^s(\Omega) \subset V$ with compact embeddings, we infer from \eqref{pier20} by means of the Ehrling lemma that
\begin{align} 
\label{pier21}
	&\norma{ \phi_\eps (t) }_{H^2(\Omega)}
	+
	\norma{\psi_\eps(t) }_{H^2(\Gamma)}+ 
	\norma{\deln \phi_\eps (t)}_{\HG}
	\notag \\
	&\quad 
	\leq 
	 C\bigl( \Phi_\eps(t) + C_s \norma{ \phi_\eps (t) }_{H^s(\Omega)} \bigr)
	 + C_s \norma{ \phi_\eps (t) }_{H^s(\Omega)}
	 \notag \\
	&\quad 
	\leq 
	 \delta \norma{ \phi_\eps (t) }_{H^2(\Omega)} + C\, \Phi_\eps(t) + C \delta^{-1} \norma{ \phi_\eps (t) }_{V},
\end{align}
for all $t\in (0,T)$ and any $\delta\in(0,1)$. 
Eventually, from \eqref{pier9a} \eqref{pier21}, it follows that 
\begin{align} 
\label{pier22}
\norma{ \phi_\eps}_{\L2 {H^2(\Omega)}} +
	\norma{\psi_\eps}_{\L2 {H^2(\Gamma)}}+ 
	\norma{\deln \phi_\eps}_{\L2 {\HG}} \leq C
\end{align}
and consequently, recalling \eqref{pier16} and \eqref{pier19}, we also have  
\begin{align} 
\label{pier23}
\norma{\mu_\eps}_{\L2 V} +
	\norma{\theta_\eps}_{\L2{V_\Gamma}} +
	\norma{\b_\eps(\phi_\eps(t))}_{\L2 H} +
	\norma{\b_{\Gamma,\eps}(\psi_\eps)}_{\L2{\HG}}\leq 
	 C. 
\end{align}

\subsection{Passage to the limit and conclusion of the proof}
\label{SEC:YOS:epstozero}

The final step consists in passing to the limit as $\eps$ is sent to zero. 
As the line of argument resembles the one presented in Section \ref{SSSEC:CONV}, we proceed rather quickly just pointing out the main points and differences.

Owing to the above uniform estimates and to standard compactness results, we obtain that
there exist a subsequence of $\eps$ and a seventuple of limits $$(\bv^*, \phi^*, \xi^*, \mu^*, \psi^*, \xi_{\Gamma}^*, \theta^*)$$ such that, as $\eps \to 0$, 
\begin{alignat*}{2}
    \bv_\eps  & \to \bv^* \quad && \text{weakly  in $\L2 {\Vs} $,}
    \\ & {} && \quad \text{strongly in $\C0 {\Hx s }$, and $\aeQ$,}
    \\
    \bv_\eps|_\Gamma  & \to \bv^*|_\Gamma \quad && \text{weakly  in $\L2 {\HHH_\Gamma}$,}
    \\ & {} && \quad \text{strongly in $\C0 { H^s(\Gamma)}$, and $\aeS$,}
    \\
    \phi_\eps
    &\to \phi^*
    &&\text{weakly-$^*$ in $\L\infty V$, weakly in $\H1 {V^*}$}, \notag\\
    &&&\quad\text{strongly in $C^0([0,T];H^s(\Omega))$, and \aeQ},
    \\
    \psi_\eps 
    &\to \psi^*
    &&\text{weakly-$^*$ in $\L\infty \VG$, weakly in $\H1 {\VG^*}$}, \notag\\
    &&&\quad\text{strongly in $C^0([0,T];H^s(\Gamma))$, and \aeS},
    \\
     \beps(\phi_\eps) & \to \xi^*
     \quad &&\text{weakly in $\L2 {{ H}}$,}
     \\
     \beta_{\Gamma,\eps}(\psi_\eps )
     & \to \xi_{\Gamma}^*
     \quad &&\text{weakly in $\L2 {{ H_\Gamma}}$,}
    \\
    \mu_\eps   & \to \mu^* \quad &&\text{weakly in $\L2 V$,}
    \\
    \th_\eps   & \to \th^* \quad && \text{weakly in $\L2 {\VG}$,}
\end{alignat*}
for all $s \in [0,1)$. In the case $K=0$, we further infer from \eqref{pier22} the convergences
\begin{alignat*}{2}
    \phi_\eps 
    &\to \phi^*
    &&\quad\text{weakly in $\L2 {H^2(\Omega)}$},
    \\
    \psi_\eps 
    &\to \psi^*
    &&\quad\text{weakly in $\L2 {H^2(\Gamma)}$}. 
\end{alignat*}
Repeating the arguments employed in Section \ref{SSSEC:CONV}, we can easily show that the above weak and strong convergences suffice to pass to the limit in the variational formulation \eqref{wf:1*}--\eqref{wf:4*} written for $\beta=\beps$ and $\betaG= \beG$. Furthermore, the inclusions
\begin{align*}
    \text{$\xi^* \in \beta(\phi^*)$ $\aeQ$ \quad 
    and
    \quad 
    $\xi_\Gamma^* \in \betaG(\psi^*)$ $\aeS$}
\end{align*}
follow directly from the maximality of the monotone operators $\beta$ and $\beta_\Gamma$, and the facts that
$$ \lim_{\eps\to 0} \int_0^T\!\!\int_\Omega \beta_\eps(\phi_\eps) \phi_\eps = \int_0^T\!\!\int_\Omega \xi^* \phi^*, \quad \lim_{\eps\to 0} \int_0^T\!\!\int_\Gamma \beta_{\Gamma,\eps}(\psi_\eps ) \psi_\eps = \int_0^T\!\!\int_\Gamma \xi_\Gamma^* \psi^*$$
(see, e.g., \cite[Prop.~1.1, p.~42]{Bar}). 
Due to the aforementioned strong convergences of $\phi_\eps$ and $\psi_\eps$, it is straightforward to check that condition (iii) of Definition~\ref{DEF:WS:SING} is fulfilled. Moreover, condition (iv) of Definition~\ref{DEF:WS:SING} can be established by proceeding analogously as in Subection~\ref{SSSEC:CONV}.

Finally, if the domain is of class $C^2$, we need to establish the higher regularity properties of the phase-fields $\phi_*$ and $\psi_*$. In the case $K=0$, this directly follows from the above convergences.
In the case $K>0$, these properties can be proved as in Subsection~\ref{SSSEC:REG} by taking advantage of the regularities $\L2 {{ H}}$ for $\xi$ and $\L2 {{ H_\Gamma}}$ for $\xi_\Gamma$. This concludes the proof of Theorem~\ref{THM:WEAK:SING}.

\section*{Appendix: Some calculus for bulk-surface function spaces}
\addcontentsline{toc}{section}{Appendix: Some calculus for bulk-surface function spaces}
\setcounter{theorem}{0}
\setcounter{equation}{0}
\renewcommand{\thetheorem}{A.\arabic{theorem}}
\renewcommand{\theequation}{A.\arabic{equation}}

\begin{proposition} \label{PROP:A}
    Let $T>0$ and $K\ge 0$ be arbitrary. 
    \begin{enumerate}[label=\textnormal{(\alph*)}, ref = \textnormal{(\alph*)}]
        \item \label{CR1} 
        Let $(u,v) \in \L2{\CV_K}$ and suppose that the weak time derivative satisfies $(\delt u,\delt v) \in \L2{\CV_K^*}$. Then, the continuity property $(u,v)\in C^0([0,T];\CH)$ holds, the mapping 
        $$t\mapsto \norma{(u,v)(t)}_{\CH}^2 = \norma{u(t)}_H^2 + \norma{v(t)}_{\HG}^2 $$ 
        is absolutely continuous in $[0,T]$, and the chain rule formula
        \begin{align}
        \label{EQ:CR1}
            \frac{\mathrm d}{\mathrm dt} \Big[ \norma{u(t)}_H^2 + \norma{v(t)}_{\HG}^2  \Big]
            = 2 \bigang{(\delt u,\delt v)(t)}{(u,v)(t)}_{\CV_K}
        \end{align}
        holds for almost all $t\in [0,T]$.
        \item \label{CR2}  
        Let $(u,v) \in L^2\big(0,T;H^3(\Omega)\times H^3(\Gamma) \big)$
        with $K \partial_n u = v - u$ \aeS,
        and suppose that their weak time derivative satisfies $(\delt u,\delt v) \in \L2{{\CV}^*}$.
        Then, the continuity property $(u,v)\in C^0([0,T];\CV_K)$ holds, the mapping 
        $$t\mapsto \norma{\nabla u(t)}_\HH^2 + \norma{\nabla_\Gamma v(t)}_{\HHG}^2 + \sigma(K) \norma{v(t)-u(t)}^2_{\HG} $$ 
        is absolutely continuous in $[0,T]$, and the chain rule formula
        \begin{align}
        \label{EQ:CR2}
            &\frac{\mathrm d}{\mathrm dt} \Big[ \norma{\nabla u(t)}_\HH^2 + \norma{\nabla_\Gamma v(t)}_{\HHG}^2 + \sigma(K) \norma{v(t)-u(t)}^2_{\HG} \Big]
            \notag\\
            &\quad= 2 \bigang{(\delt u,\delt v)(t)}{(-\Delta u, -\Delta_\Gamma v + \deln u)(t)}_{\CV}
        \end{align}
        holds for almost all $t\in [0,T]$.
    \end{enumerate}
\end{proposition}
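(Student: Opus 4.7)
For part \ref{CR1}, the spaces form a Gelfand triple $\CV_K\hookrightarrow\CH\hookrightarrow\CV_K^*$ in which $\CV_K$ and $\CH$ are both Hilbert and the first embedding is continuous and dense. The claim therefore follows from the classical Lions--Magenes chain rule (see, e.g., \cite{boyer_book}), which yields $L^2(0,T;\CV_K)\cap H^1(0,T;\CV_K^*)\hookrightarrow C^0([0,T];\CH)$, the absolute continuity of $t\mapsto\norma{(u,v)(t)}_\CH^2$, and formula \eqref{EQ:CR1}.

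For part \ref{CR2}, the plan is to combine a Green-type identity with a time-mollification argument. The starting point is the following identity, which I would derive by integration by parts in the bulk (Green's first identity) and on the closed surface $\Gamma$: for any $(w,w_\Gamma)\in H^3(\Omega)\times H^3(\Gamma)$ satisfying $K\deln w=w_\Gamma-w\vert_\Gamma$ on $\Gamma$ and any $(\eta,\eta_\Gamma)\in\CV_K$,
\begin{align*}
    &\iO \nabla w\cdot\nabla\eta
    +\iG\nabla_\Gamma w_\Gamma\cdot\nabla_\Gamma\eta_\Gamma
    +\sigma(K)\iG(w_\Gamma-w)(\eta_\Gamma-\eta) \\
    &\quad =\iO(-\Delta w)\,\eta+\iG(-\Delta_\Gamma w_\Gamma+\deln w)\,\eta_\Gamma.
\end{align*}
When $K>0$ the restriction $(\eta,\eta_\Gamma)\in\CV_K=\CV$ is vacuous, and the coupling condition on $w$ absorbs the bulk trace contribution into the $\sigma(K)$-term; when $K=0$, the constraint $\eta_\Gamma=\eta\vert_\Gamma$ built into $\CV_0$ is what cancels the surface trace.

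Next, I would extend $(u,v)$ to a neighbourhood of $[0,T]$ by a regularity-preserving reflection and mollify in time by convolution with $\rho_\eps$, obtaining approximants $(u_\eps,v_\eps)$ that are smooth in time, lie in $L^2(0,T;H^3(\Omega)\times H^3(\Gamma))$, satisfy the (linear, hence preserved) coupling $K\deln u_\eps=v_\eps-u_\eps\vert_\Gamma$, and enjoy $(u_\eps,v_\eps)\to(u,v)$ in $L^2(0,T;H^3\times H^3)$ and $(\delt u_\eps,\delt v_\eps)\to(\delt u,\delt v)$ in $L^2(0,T;\CV^*)$. In the case $K=0$, preservation of the bulk-surface coupling also forces $\delt u_\eps\vert_\Gamma=\delt v_\eps$, so that $(\delt u_\eps,\delt v_\eps)(t)\in\CV_0$ and the identity above applies with this choice of test pair. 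Combining it with the classical chain rule applied to the smooth-in-time map $t\mapsto E_K(u_\eps(t),v_\eps(t)):=\norma{\nabla u_\eps(t)}_\HH^2+\norma{\nabla_\Gamma v_\eps(t)}_\HHG^2+\sigma(K)\norma{v_\eps(t)-u_\eps(t)}_\HG^2$ produces
\begin{equation*}
    \tfrac{d}{dt}E_K(u_\eps(t),v_\eps(t)) = 2\,\bigang{(\delt u_\eps,\delt v_\eps)(t)}{(-\Delta u_\eps,-\Delta_\Gamma v_\eps+\deln u_\eps)(t)}_\CV .
\end{equation*}
Integrating over any $[s,t]\subseteq[0,T]$ and sending $\eps\to 0$ (using the above convergences) yields \eqref{EQ:CR2} in integrated form, hence a.e.\ in $[0,T]$, together with the absolute continuity of $t\mapsto E_K(u(t),v(t))$.

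Finally, continuity $(u,v)\in C^0([0,T];\CV_K)$ is obtained by upgrading weak continuity (which follows from part \ref{CR1} together with the now-established boundedness of $E_K(u(\cdot),v(\cdot))$) to strong continuity via the continuity of the norm, noting that $E_K+\norma{\cdot}_\CH^2$ defines a norm equivalent to the $\CV_K$-norm. The main obstacle is the mollification step: one must verify that the reflection-and-convolution procedure genuinely preserves both the $H^3\times H^3$-regularity and the coupling $K\deln u=v-u$, and that the $L^2(0,T;\CV^*)$-convergence of $(\delt u_\eps,\delt v_\eps)$, coupled with the $L^2(0,T;\CV)$-convergence of $(-\Delta u_\eps,-\Delta_\Gamma v_\eps+\deln u_\eps)$, is strong enough to pass to the limit in a duality pairing whose both factors depend on $\eps$.
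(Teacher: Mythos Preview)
Your proposal is correct and follows essentially the same route as the paper: extend by reflection, mollify in time, derive the identity for the smooth approximants by integration by parts (using the coupling condition $K\deln u_\eps = v_\eps - u_\eps$, which is preserved under time-mollification because it is linear and purely spatial), integrate in time, and pass to the limit. Part~\ref{CR1} is handled identically via the Lions--Magenes lemma.

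The one substantive difference is how continuity in $C^0([0,T];\CV_K)$ is obtained. You argue weak continuity plus continuity of the norm, which is valid but requires some care: to pass to the limit in the integrated identity at \emph{every} $t$ (rather than only a.e.~$t$), you need pointwise convergence of $E_K(u_\eps(t),v_\eps(t))$, and $L^2$-in-time convergence alone gives this only along a subsequence and on a full-measure set. The paper sidesteps this by inserting a Cauchy-sequence argument: applying the same differentiate-and-integrate-by-parts computation to the differences $(u_j-u_k,\,v_j-v_k)$ and integrating from a well-chosen time $s$ (one where convergence holds) shows directly that $(\nabla u_k)$ and $(\nabla_\Gamma v_k)$ are Cauchy in $C^0([0,T];\HH)$ and $C^0([0,T];\HHG)$, respectively. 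This yields uniform-in-$t$ convergence outright, so the limit passage at every $t$ is immediate and the continuity $(u,v)\in C^0([0,T];\CV_K)$ drops out before one even writes down the integrated chain rule. Your weak-to-strong upgrade achieves the same end but needs the extra bookkeeping you flag in your final paragraph.
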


\begin{proof}
    \textit{Proof of \ref{CR1}.} Since $\CV_K$ and $\CH$ are separable Hilbert spaces with compact embedding $\CV_K\emb \CH$ and continuous embedding
    $\CH \emb \CV^*_K$,
    the assertion follows directly from the Lions--Magenes lemma (see, e.g., \cite[Chapter~III, Lemma~1.2]{temam1979navier-stokes}).

    \textit{Proof of \ref{CR2}.} 
    We first fix $u$ and $v$ as arbitrary representatives of their respective equivalence class.
    Recall that due to \ref{CR1}, we have $u\in C^0([0,T];H)$ and $v\in C^0([0,T];\HG)$.
    We can thus extend the functions $u$ and $v$ onto $[-T,0]$ by defining 
    $u(t)$ and $v(t)$ by reflection for all $t<0$. 
    
    Let $\rho\in C^\infty_c(\R)$ be a nonnegative function with $\supp \rho \subset (0,1)$ and $\norma{\rho}_{L^1(\R)}=1$. For any $k\in\N$, we set
    \begin{align*}
        \rho_k(s) := k \rho(ks) \quad\text{for all $s\in\R$}.
    \end{align*}
    
    For any Banach space $X$ and any function $f\in L^2(-1,T;X)$, we define
    \begin{align*}
        f_k(t) := (\rho_k * f)(t)=\int_{t-\frac 1k}^t \rho_k(t-s)\, f(s) \, {\mathrm ds}
    \end{align*}
    for all $t\in [0,T]$ and all $k\in\N$. By this construction, we have $f_k\in C^\infty([0,T];X)$ with $f_k\to f$ strongly in $L^2(0,T;X)$ as $k\to\infty$.
    
    For any $k\in\N$, we now choose $X=H^3(\Omega)$ to define $u_k$ and $X=H^3(\Gamma)$ to define $v_k$ as described above.
    By this construction, it holds $\delt u_k = (\delt u)_k$ and $\delt \nabla u_k = \nabla \delt u_k$ {\aeQ} as well as $\delt v_k = (\delt v)_k$ and $\delt \nabla_\Gamma v_k = \nabla_\Gamma \delt v_k$ {\aeS} for all $k\in\N$. Moreover, as $k\to\infty$, we have
    \begin{alignat}{2}
        \label{CONV:A:0}
        u_k &\to u 
        &&\quad\text{strongly in $L^2(0,T;H^3(\Omega))$},
        \\
        \label{CONV:A:1}
        v_k &\to v
        &&\quad\text{strongly in $L^2(0,T;H^3(\Gamma))$},
        \\
        \label{CONV:A:2}
        (u_k,v_k) &\to (u,v)
        &&\quad\text{strongly in $L^2(0,T;\CV_K)$},
        \\
        \label{CONV:A:3}
        (\delt u_k,\delt v_k) &\to (\delt u,\delt v)
        &&\quad\text{strongly in $L^2(0,T;\CV^*)$}.
    \end{alignat}
    
    In the following, the letter $C$ will denote generic positive constants that are independent of $k$ and may change their value from line to line. 
    Now, for any $k\in\N$, we derive the identity
    \begin{align}
    & \non
    \frac{\mathrm d}{\mathrm dt} \Big[ \norma{\nabla u_k}_{\HH}^2 + \norma{\nabla_\Gamma v_k}_{\HHG}^2 
    + \sigma(K) \norma{v_k-u_k}^2_{\HG} \Big]
        \\  &  \quad    \label{EQ:A:2}
    = 2 \bigang{(\delt u_k,\delt v_k)}{(-\Delta u_k, -\Delta_\Gamma v_k + \deln u_k)}_{{\CV}}
    \end{align}
    in $[0,T]$ by differentiating under the integral sign, applying integration by parts, and employing the relation 
    \begin{align*}
        \sigma(K) (v_k - u_k) = 
        \begin{cases}
            0 &\text{if $K=0$},\\
            \deln u_k &\text{if $K>0$},
        \end{cases}
        \qquad\aeS.
    \end{align*}
    Let now $j,k\in\N$ be arbitrary. Proceeding as above, we calculate
    \begin{align}
        &\frac{\mathrm d}{\mathrm dt} \Big[ \norma{\nabla u_j - \nabla u_k}_{\HH}^2 
        + \norma{\nabla_\Gamma v_j - \nabla_\Gamma v_k}_{\HHG}^2
        + \sigma(K) \norma{(v_j-v_k) -(u_j-u_k)}^2_{\HG}
        \Big]
        \notag\\[1ex]
        & \quad = 2 \bigang{\big(\delt (u_j - u_k),\delt (v_j - v_k)\big)}{\big( -\Delta (u_j-u_k), -\Delta_\Gamma (v_j-v_k) + \deln (u_j-u_k) \big)}_{{\CV}}
        \notag\\[1ex]
        & \qquad \le C \Big( \norma{\big(\delt (u_j - u_k),\delt (v_j - v_k) \big)}_{\CV_*}^2 
        + \norma{u_j - u_k}_{H^3(\Omega)}^2 
        + \norma{v_j - v_k}_{H^3(\Gamma)}^2 \Big)
        \label{EQ:A:3}
    \end{align}
    in $[0,T]$.
    Here, we have used the embedding $H^3(\Omega)\emb H^2(\Gamma)$ resulting from the trace theorem, which yields
    $$\norma{\deln (u_j-u_k)}_{\VG} \le \norma{u_j-u_k}_{H^2(\Gamma)} \le C \norma{u_j-u_k}_{H^3(\Omega)}.$$
    Let now $s,t \in [0,T]$ be arbitrary with $s \leq t$. We then integrate inequality \eqref{EQ:A:3} with respect to time from $s$ to $t$. 
    This yields
    \begin{align}
    \label{EST:A:1}
        &\norma{(\nabla u_j - \nabla u_k)(t)}_{\HH}^2 
        + \norma{(\nabla_\Gamma v_j - \nabla_\Gamma v_k)(t)}_{\HHG}^2
        \notag\\[1ex]
        &\quad
        + \sigma(K) \norma{(v_j(t)-v_k(t)) -(u_j(t)-u_k(t))}^2_{\HG}
        \notag\\[1ex]
        &\leq
        \norma{(\nabla u_j - \nabla u_k)(s)}_{\HH}^2 
        + \norma{(\nabla_\Gamma v_j - \nabla_\Gamma v_k)(s)}_{\HHG}^2
        \\[1ex]\notag
        &\quad
        + \sigma(K) \norma{(v_j(s)-v_k(s)) -(u_j(s)-u_k(s))}^2_{\HG}
        \\\notag
        &\quad 
        + C \int_s^t \norma{\big(\delt (u_j - u_k),\delt (v_j - v_k) \big)}_{{\CV^*}}^2 
        + \norma{u_j - u_k}_{H^3(\Omega)}^2 
        + \norma{v_j - v_k}_{H^3(\Gamma)}^2 .
    \end{align}
    Since $(u_k,v_k) \to (u,v)$ strongly in $L^2\big(0,T;(H^3(\Omega)\times H^3(\Gamma))\big)$, we can fix $s\in [0,t]$ such that $(u_k,v_k)(s) \to (u,v)(s)$ strongly in $H^3(\Omega)\times H^3(\Gamma)$.
    Recalling the convergences \eqref{CONV:A:0}--\eqref{CONV:A:3}, we thus infer that the right-hand side in \eqref{EST:A:1} tends to zero as $j,k\to\infty$. Consequently, $(\nabla u_k)_{k\in\N}$ is a Cauchy sequence in $C^0([0,T];\HH)$ and $(\nabla v_k)_{k\in\N}$ is a Cauchy sequence in $C^0([0,T];\HHG)$. We thus conclude 
    \begin{alignat}{2}
        \label{CONV:A:4}
        \nabla u_k &\to \nabla u 
        &&\quad\text{strongly in $C^0([0,T];\HH)$},
        \\
        \label{CONV:A:5}
        \nabla_\Gamma v_k &\to \nabla_\Gamma u 
        &&\quad\text{strongly in $C^0([0,T];\HHG)$}
    \end{alignat}
    as $k\to\infty$. Together with \ref{CR1}, this proves
    \begin{align*}
        (u,v) \in C^0([0,T];\CV_K).
    \end{align*}
    Let now  $s,t\in[0,T]$ be arbitrary. Without loss of generality, we assume $s\le t$. Integrating \eqref{EQ:A:2} with respect to time from $s$ to $t$, we obtain
    \begin{align*}
        &\norma{\nabla u_k(t)}_{\HH}^2 + \norma{\nabla_\Gamma v_k(t)}_{\HHG}^2
        + \sigma(K) \norma{v_k(t)-u_k(t)}^2_{\HG}
        \\
        &\quad
        = \norma{\nabla u_k(s)}_{\HH}^2 + \norma{\nabla_\Gamma v_k(s)}_{\HHG}^2
        + \sigma(K) \norma{v_k(s)-u_k(s)}^2_{\HG}
        \\
        &\qquad
        + 2 \int_s^t \bigang{(\delt u_k,\delt v_k)}{(-\Delta u_k, -\Delta_\Gamma v_k + \deln u_k)}_{{\CV}}.
    \end{align*}
    Invoking the convergences \eqref{CONV:A:0}--\eqref{CONV:A:3}, \eqref{CONV:A:4} and \eqref{CONV:A:5}, we pass to the limit $k\to\infty$ in this identity. This yields
    \begin{align*}
        &\norma{\nabla u(t)}_{\HH}^2 + \norma{\nabla_\Gamma v(t)}_{\HHG}^2
        + \sigma(K) \norma{v(t)-u(t)}^2_{\HG}
        \\
        &\quad
        = \norma{\nabla u(s)}_{\HH}^2 + \norma{\nabla_\Gamma v(s)}_{\HHG}^2
        + \sigma(K) \norma{v(s)-u(s)}^2_{\HG}
        \\
        &\qquad
        + 2 \int_s^t \bigang{\big(\delt u,\delt v\big)}{\big(-\Delta u, -\Delta_\Gamma v + \deln u\big)}_{{\CV}}.
    \end{align*}
    As the integrand of the integral on the right-hand side belongs to $L^1(0,T)$, we conclude that the mapping $t\mapsto \norma{\nabla u(t)}_\HH^2 + \norma{\nabla_\Gamma v(t)}_{\HHG}^2 + \sigma(K) \norma{v(t)-u(t)}^2_{\HG}$ is absolutely continuous in $[0,T]$. It is thus differentiable almost everywhere in $[0,T]$ and its derivative satisfies the formula \eqref{EQ:CR2}. This verifies \ref{CR2} and thus, the proof is complete.  
\end{proof}

%


\section*{Acknowledgement} 
The authors would like to express their sincere gratitude to the anonymous referee for the detailed review and several valuable suggestions and comments which helped us to improve the contents and the quality of the paper.

This research has been performed within the framework of the MIUR-PRIN Grant
2020F3NCPX ``Mathematics for industry 4.0 (Math4I4)''. The present paper also benefits from the
support of the GNAMPA (Gruppo Nazionale per l’Analisi Matematica, la Probabilit\`a e le loro Applicazioni) of INdAM (Istituto Nazionale di Alta Matematica).

Andrea Signori has been supported by ``MUR GRANT Dipartimento di Eccellenza'' 2023-2027 and by the Alexander von Humboldt Foundation.

Moreover, Patrik Knopf was partially supported by the Deutsche Forschungsgemeinschaft (DFG, German Research Foundation): on the one hand by the DFG-project 524694286, and on the other hand by the RTG 2339 ``Interfaces, Complex Structures, and Singular Limits''.
Their support is gratefully acknowledged.

Parts of this work were done while Patrik Knopf was visiting the 
Dipartimento di Matematica ``F. Casorati'' of the Universit\`{a} di Pavia whose hospitality is gratefully appreciated.

Last but not least, we thank Jonas Stange for helping us to proofread the manuscript.

\section*{Data availability}
There is no additional data associated with this manuscript.

\section*{Conflict of interests}
The authors do not have any financial or non-financial interests that are directly or indirectly related to the work submitted for publication.


\footnotesize
\bibliographystyle{abbrv}
\bibliography{CKSS}

\begin{thebibliography}{10}

\bibitem{AbelsHabil}
H.~Abels.
\newblock \textit{Diffuse Interface Models for Two-Phase Flows of Viscous
  Incompressible Fluids}.
\newblock Habilitationsschrift, Max-Planck-Institut f\"ur Mathematik in den
  Naturwissenschaften, Leipzig,
  \url{https://files-www.mis.mpg.de/mpi-typo3/preprints/ln/lecturenote-3607.pdf},
  2007.

\bibitem{Abels2009}
H.~Abels.
\newblock On a diffuse interface model for two-phase flows of viscous,
  incompressible fluids with matched densities.
\newblock {\em Arch. Ration. Mech. Anal.}, 194(2):463--506, 2009.

\bibitem{Abels2012}
H.~Abels.
\newblock Strong well-posedness of a diffuse interface model for a viscous,
  quasi-incompressible two-phase flow.
\newblock {\em SIAM J. Math. Anal.}, 44(1):316--340, 2012.

\bibitem{abels2013existence}
H.~Abels, D.~Depner, and H.~Garcke.
\newblock Existence of weak solutions for a diffuse interface model for
  two-phase flows of incompressible fluids with different densities.
\newblock {\em Journal of Mathematical Fluid Mechanics}, 15(3):453--480, 2013.

\bibitem{abels2013incompressible}
H.~Abels, D.~Depner, and H.~Garcke.
\newblock On an incompressible {N}avier--{S}tokes/{C}ahn--{H}illiard system
  with degenerate mobility.
\newblock {\em Annales de l'Institut Henri Poincar{\'e} C}, 30(6):1175--1190,
  2013.

\bibitem{AbelsGarckeReview}
H.~Abels and H.~Garcke.
\newblock Weak solutions and diffuse interface models for incompressible
  two-phase flows.
\newblock In {\em Handbook of mathematical analysis in mechanics of viscous
  fluids}, pages 1267--1327. Springer, Cham, 2018.

\bibitem{Abels2023}
H.~Abels, H.~Garcke, and A.~Giorgini.
\newblock Global regularity and asymptotic stabilization for the incompressible
  {N}avier-{S}tokes-{C}ahn-{H}illiard model with unmatched densities.
\newblock {\em Math. Ann.}, 389(2):1267--1321, 2024.

\bibitem{AGG}
H.~Abels, H.~Garcke, and G.~Gr\"{u}n.
\newblock Thermodynamically consistent, frame indifferent diffuse interface
  models for incompressible two-phase flows with different densities.
\newblock {\em Math. Models Methods Appl. Sci.}, 22(3):1150013, 40, 2012.

\bibitem{AbelsWeber2021}
H.~Abels and J.~Weber.
\newblock Local well-posedness of a quasi-incompressible two-phase flow.
\newblock {\em J. Evol. Equ.}, 21(3):3477--3502, 2021.

\bibitem{Allaire1991}
G.~Allaire.
\newblock Homogenization of the {N}avier-{S}tokes equations with a slip
  boundary condition.
\newblock {\em Comm. Pure Appl. Math.}, 44(6):605--641, 1991.

\bibitem{Alt}
H.~W. Alt.
\newblock {\em {Linear Functional Analysis - An Application-Oriented
  Introduction}}.
\newblock Springer, London, 2016.

\bibitem{Bar}
V.~Barbu.
\newblock {\em Nonlinear semigroups and differential equations in {B}anach
  spaces}.
\newblock Editura Academiei Republicii Socialiste Rom\^{a}nia, Bucharest;
  Noordhoff International Publishing, Leiden, 1976.
\newblock Translated from the Romanian.

\bibitem{boyer1999}
F.~Boyer.
\newblock Mathematical study of multi-phase flow under shear through order
  parameter formulation.
\newblock {\em Asymptot. Anal.}, 20(2):175--212, 1999.

\bibitem{boyer2002theoretical}
F.~Boyer.
\newblock A theoretical and numerical model for the study of incompressible
  mixture flows.
\newblock {\em Computers \& fluids}, 31(1):41--68, 2002.

\bibitem{boyer_book}
F.~Boyer and P.~Fabrie.
\newblock {\em Mathematical tools for the study of the incompressible
  {N}avier-{S}tokes equations and related models}, volume 183 of {\em Applied
  Mathematical Sciences}.
\newblock Springer, New York, 2013.

\bibitem{brezis}
H.~Brezis.
\newblock {\em Op\'erateurs maximaux monotones et semi-groupes de contractions
  dans les espaces de Hilbert}.
\newblock Elsevier, 1973.

\bibitem{CaCo}
L.~Calatroni and P.~Colli.
\newblock Global solution to the {A}llen--{C}ahn equation with singular
  potentials and dynamic boundary conditions.
\newblock {\em Nonlinear Anal.}, 79:12--27, 2013.

\bibitem{Chen2019}
J.~Chen, S.~Sun, and X.~Wang.
\newblock Homogenization of two-phase fluid flow in porous media via volume
  averaging.
\newblock {\em J. Comput. Appl. Math.}, 353:265--282, 2019.

\bibitem{Colli2015}
P.~Colli and T.~Fukao.
\newblock Equation and dynamic boundary condition of {C}ahn--{H}illiard type
  with singular potentials.
\newblock {\em Nonlinear Anal.}, 127:413--433, 2015.

\bibitem{CF6}
P.~Colli and T.~Fukao.
\newblock Vanishing diffusion in a dynamic boundary condition for the
  {C}ahn--{H}illiard equation.
\newblock {\em NoDEA Nonlinear Differential Equations Appl.}, 27(6):Paper No.
  53, 27, 2020.

\bibitem{Colli2019}
P.~Colli, T.~Fukao, and K.~F. Lam.
\newblock On a coupled bulk-surface {A}llen--{C}ahn system with an affine
  linear transmission condition and its approximation by a {R}obin boundary
  condition.
\newblock {\em Nonlinear Anal.}, 184:116--147, 2019.

\bibitem{Colli2022}
P.~Colli, T.~Fukao, and L.~Scarpa.
\newblock The {C}ahn--{H}illiard equation with forward-backward dynamic
  boundary condition via vanishing viscosity.
\newblock {\em SIAM J. Math. Anal.}, 54(3):3292--3315, 2022.

\bibitem{Colli2022a}
P.~Colli, T.~Fukao, and L.~Scarpa.
\newblock A {C}ahn--{H}illiard system with forward-backward dynamic boundary
  condition and non-smooth potentials.
\newblock {\em J. Evol. Equ.}, 22(4):Paper No. 89, 31, 2022.

\bibitem{Colli2020}
P.~Colli, T.~Fukao, and H.~Wu.
\newblock On a transmission problem for equation and dynamic boundary condition
  of {C}ahn--{H}illiard type with nonsmooth potentials.
\newblock {\em Math. Nachr.}, 293(11):2051--2081, 2020.

\bibitem{CGSS1}
P.~Colli, G.~Gilardi, A.~Signori, and J.~Sprekels.
\newblock {C}ahn--{H}illiard--{B}rinkman model for tumor growth with possibly
  singular potentials.
\newblock {\em Nonlinearity}, 36(8):4470--4500, 2023.

\bibitem{CGS_dom}
P.~Colli, G.~Gilardi, and J.~Sprekels.
\newblock On the {C}ahn--{H}illiard equation with dynamic boundary conditions
  and a dominating boundary potential.
\newblock {\em J. Math. Anal. Appl.}, 419(2):972--994, 2014.

\bibitem{CGS2017}
P.~Colli, G.~Gilardi, and J.~Sprekels.
\newblock Global existence for a nonstandard viscous {C}ahn--{H}illiard system
  with dynamic boundary condition.
\newblock {\em SIAM J. Math. Anal.}, 49(3):1732--1760, 2017.

\bibitem{CGS2018b}
P.~Colli, G.~Gilardi, and J.~Sprekels.
\newblock On a {C}ahn--{H}illiard system with convection and dynamic boundary
  conditions.
\newblock {\em Ann. Mat. Pura Appl. (4)}, 197(5):1445--1475, 2018.

\bibitem{CGS2018a}
P.~Colli, G.~Gilardi, and J.~Sprekels.
\newblock On the longtime behavior of a viscous {C}ahn--{H}illiard system with
  convection and dynamic boundary conditions.
\newblock {\em J. Elliptic Parabol. Equ.}, 4(2):327--347, 2018.

\bibitem{CGS2018c}
P.~Colli, G.~Gilardi, and J.~Sprekels.
\newblock Optimal velocity control of a viscous {C}ahn--{H}illiard system with
  convection and dynamic boundary conditions.
\newblock {\em SIAM J. Control Optim.}, 56(3):1665--1691, 2018.

\bibitem{Dede2018}
L.~Ded\`e, H.~Garcke, and K.~F. Lam.
\newblock A {H}ele-{S}haw-{C}ahn-{H}illiard model for incompressible two-phase
  flows with different densities.
\newblock {\em J. Math. Fluid Mech.}, 20(2):531--567, 2018.

\bibitem{DiBenedetto}
E.~DiBenedetto.
\newblock {\em Real analysis}.
\newblock Birkh\"{a}user Advanced Texts: Basler Lehrb\"{u}cher. [Birkh\"{a}user
  Advanced Texts: Basel Textbooks]. Birkh\"{a}user Boston, Inc., Boston, MA,
  2002.

\bibitem{ding2007diffuse}
H.~Ding, P.~Spelt, and C.~Shu.
\newblock Diffuse interface model for incompressible two-phase flows with large
  density ratios.
\newblock {\em Journal of Computational Physics}, 226(2):2078--2095, 2007.

\bibitem{du2020phase}
Q.~Du and X.~Feng.
\newblock The phase field method for geometric moving interfaces and their
  numerical approximations.
\newblock {\em Handbook of Numerical Analysis}, 21:425--508, 2020.

\bibitem{Ebenbeck2019}
M.~Ebenbeck and H.~Garcke.
\newblock Analysis of a {C}ahn-{H}illiard-{B}rinkman model for tumour growth
  with chemotaxis.
\newblock {\em J. Differential Equations}, 266(9):5998--6036, 2019.

\bibitem{Ern}
A.~Ern and J.-L. Guermond.
\newblock {\em Theory and practice of finite elements}, volume 159 of {\em
  Applied Mathematical Sciences}.
\newblock Springer-Verlag, New York, 2004.

\bibitem{Feireisl2016}
E.~Feireisl, Y.~Namlyeyeva, and v.~Ne\v{c}asov\'{a}.
\newblock Homogenization of the evolutionary {N}avier-{S}tokes system.
\newblock {\em Manuscripta Math.}, 149(1-2):251--274, 2016.

\bibitem{Feng2018}
X.~Feng, J.~Kou, and S.~Sun.
\newblock A novel energy stable numerical scheme for
  {N}avier-{S}tokes-{C}ahn-{H}illiard two-phase flow model with variable
  densities and viscosities.
\newblock In {\em Computational science---{ICCS} 2018. {P}art {III}}, volume
  10862 of {\em Lecture Notes in Comput. Sci.}, pages 113--128. Springer, Cham,
  2018.

\bibitem{freistuhler2017phase}
H.~Freist{\"u}hler and M.~Kotschote.
\newblock Phase-field and {K}orteweg-type models for the time-dependent flow of
  compressible two-phase fluids.
\newblock {\em Archive for Rational Mechanics and Analysis}, 224(1):1--20,
  2017.

\bibitem{GalGrasselli2010}
C.~Gal and M.~Grasselli.
\newblock Asymptotic behavior of a {C}ahn--{H}illiard--{N}avier--{S}tokes
  system in 2{D}.
\newblock {\em Ann. Inst. H. Poincar\'{e} C Anal. Non Lin\'{e}aire},
  27(1):401--436, 2010.

\bibitem{GGM2016}
C.~G. Gal, M.~Grasselli, and A.~Miranville.
\newblock {C}ahn--{H}illiard--{N}avier--{S}tokes systems with moving contact
  lines.
\newblock {\em Calc. Var. Partial Differential Equations}, 55(3):Art. 50, 47,
  2016.

\bibitem{GGW2019}
C.~G. Gal, M.~Grasselli, and H.~Wu.
\newblock Global weak solutions to a diffuse interface model for incompressible
  two-phase flows with moving contact lines and different densities.
\newblock {\em Arch. Ration. Mech. Anal.}, 234(1):1--56, 2019.

\bibitem{Garcke2020}
H.~Garcke and P.~Knopf.
\newblock Weak solutions of the {C}ahn--{H}illiard system with dynamic boundary
  conditions: a gradient flow approach.
\newblock {\em SIAM J. Math. Anal.}, 52(1):340--369, 2020.

\bibitem{Garcke2022}
H.~Garcke, P.~Knopf, and S.~Yayla.
\newblock Long-time dynamics of the {C}ahn--{H}illiard equation with kinetic
  rate dependent dynamic boundary conditions.
\newblock {\em Nonlinear Anal.}, 215:Paper No. 112619, 44, 2022.

\bibitem{giga2017variational}
M.-H. Giga, A.~Kirshtein, and C.~Liu.
\newblock Variational modeling and complex fluids.
\newblock {\em Handbook of mathematical analysis in mechanics of viscous
  fluids}, pages 1--41, 2017.

\bibitem{GMS2009}
G.~Gilardi, A.~Miranville, and G.~Schimperna.
\newblock On the {C}ahn--{H}illiard equation with irregular potentials and
  dynamic boundary conditions.
\newblock {\em Commun. Pure Appl. Anal.}, 8(3):881--912, 2009.

\bibitem{GMS2010}
G.~Gilardi, A.~Miranville, and G.~Schimperna.
\newblock Long time behavior of the {C}ahn--{H}illiard equation with irregular
  potentials and dynamic boundary conditions.
\newblock {\em Chinese Ann. Math. Ser. B}, 31(5):679--712, 2010.

\bibitem{giorgini2021well}
A.~Giorgini.
\newblock Well-posedness of the two-dimensional {A}bels--{G}arcke--{G}r{\"u}n
  model for two-phase flows with unmatched densities.
\newblock {\em Calculus of Variations and Partial Differential Equations},
  60(3):1--40, 2021.

\bibitem{giorgini2022-3D}
A.~Giorgini.
\newblock Existence and stability of strong solutions to the
  {A}bels--{G}arcke--{G}r{\"u}n model in three dimensions.
\newblock {\em Interfaces Free Bound.}, 24:565–608, 2022.

\bibitem{Giorgini2018}
A.~Giorgini, M.~Grasselli, and H.~Wu.
\newblock The {C}ahn-{H}illiard-{H}ele-{S}haw system with singular potential.
\newblock {\em Ann. Inst. H. Poincar\'{e} C Anal. Non Lin\'{e}aire},
  35(4):1079--1118, 2018.

\bibitem{Giorgini-Knopf}
A.~Giorgini and P.~Knopf.
\newblock {Two-Phase Flows with Bulk-Surface Interaction: Thermodynamically
  Consistent Navier--Stokes--Cahn--Hilliard Models with Dynamic Boundary
  Conditions}.
\newblock {\em J. Math. Fluid Mech.}, 25(65):1--44, 2022.

\bibitem{GMT2019}
A.~Giorgini, A.~Miranville, and R.~Temam.
\newblock Uniqueness and regularity for the
  {N}avier--{S}tokes--{C}ahn--{H}illiard system.
\newblock {\em SIAM J. Math. Anal.}, 51(3):2535--2574, 2019.

\bibitem{Giunti2019}
A.~Giunti and R.~M. H\"{o}fer.
\newblock Homogenisation for the {S}tokes equations in randomly perforated
  domains under almost minimal assumptions on the size of the holes.
\newblock {\em Ann. Inst. H. Poincar\'{e} C Anal. Non Lin\'{e}aire},
  36(7):1829--1868, 2019.

\bibitem{GMS}
G.~R. Goldstein, A.~Miranville, and G.~Schimperna.
\newblock A {C}ahn--{H}illiard model in a domain with non-permeable walls.
\newblock {\em Phys. D}, 240(8):754--766, 2011.

\bibitem{GurtinPolignoneVinals}
M.~Gurtin, D.~Polignone, and J.~Vi\~{n}als.
\newblock Two-phase binary fluids and immiscible fluids described by an order
  parameter.
\newblock {\em Math. Models Methods Appl. Sci.}, 6(6):815--831, 1996.

\bibitem{heida2012development}
M.~Heida, J.~M{\'a}lek, and K.~Rajagopal.
\newblock On the development and generalizations of {C}ahn--{H}illiard
  equations within a thermodynamic framework.
\newblock {\em Zeitschrift f{\"u}r angewandte Mathematik und Physik},
  63(1):145--169, 2012.

\bibitem{Hoefer2023}
R.~M. H\"{o}fer.
\newblock Homogenization of the {N}avier-{S}tokes equations in perforated
  domains in the inviscid limit.
\newblock {\em Nonlinearity}, 36(11):6019--6046, 2023.

\bibitem{HH}
P.~C. Hohenberg and B.~I. Halperin.
\newblock Theory of dynamic critical phenomena.
\newblock {\em Rev. Mod. Phys.}, 49:435--479, 1977.

\bibitem{Knopf2020}
P.~Knopf and K.~Lam.
\newblock {Convergence of a Robin boundary approximation for a Cahn--Hilliard
  system with dynamic boundary conditions}.
\newblock {\em Nonlinearity}, 33(8):4191--4235, 2020.

\bibitem{Knopf2021a}
P.~Knopf, K.~Lam, C.~Liu, and S.~Metzger.
\newblock Phase-field dynamics with transfer of materials: the
  {C}ahn--{H}illiard equation with reaction rate dependent dynamic boundary
  conditions.
\newblock {\em ESAIM Math. Model. Numer. Anal.}, 55(1):229--282, 2021.

\bibitem{knopf-liu}
P.~Knopf and C.~Liu.
\newblock On second-order and fourth-order elliptic systems consisting of bulk
  and surface {PDE}s: well-posedness, regularity theory and eigenvalue
  problems.
\newblock {\em Interfaces Free Bound.}, 23(4):507--533, 2021.

\bibitem{Knopf2021}
P.~Knopf and A.~Signori.
\newblock On the nonlocal {C}ahn--{H}illiard equation with nonlocal dynamic
  boundary condition and boundary penalization.
\newblock {\em J. Differential Equations}, 280:236--291, 2021.

\bibitem{Knopf2022}
P.~Knopf and A.~Signori.
\newblock Existence of weak solutions to multiphase {C}ahn-{H}illiard-{D}arcy
  and {C}ahn-{H}illiard-{B}rinkman models for stratified tumor growth with
  chemotaxis and general source terms.
\newblock {\em Comm. Partial Differential Equations}, 47(2):233--278, 2022.

\bibitem{Liu2019}
C.~Liu and H.~Wu.
\newblock {An energetic variational approach for the {C}ahn--{H}illiard
  equation with dynamic boundary condition: model derivation and mathematical
  analysis}.
\newblock {\em Arch. Ration. Mech. Anal.}, 233(1):167--247, 2019.

\bibitem{lowengrub1998quasi}
J.~Lowengrub and L.~Truskinovsky.
\newblock {Q}uasi-incompressible {C}ahn--{H}illiard fluids and topological
  transitions.
\newblock {\em Proceedings of the Royal Society of London. Series A:
  Mathematical, Physical and Engineering Sciences}, 454(1978):2617--2654, 1998.

\bibitem{Masmoudi2002}
N.~Masmoudi.
\newblock Homogenization of the compressible {N}avier-{S}tokes equations in a
  porous medium.
\newblock {\em ESAIM Control Optim. Calc. Var.}, 8:885--906, 2002.
\newblock A tribute to J. L. Lions.

\bibitem{Miranville2020}
A.~Miranville and H.~Wu.
\newblock Long-time behavior of the {C}ahn--{H}illiard equation with dynamic
  boundary condition.
\newblock {\em J. Elliptic Parabol. Equ.}, 6(1):283--309, 2020.

\bibitem{MZ}
A.~Miranville and S.~Zelik.
\newblock Robust exponential attractors for {C}ahn--{H}illiard type equations
  with singular potentials.
\newblock {\em Math. Methods Appl. Sci.}, 27(5):545--582, 2004.

\bibitem{Necas}
J.~Ne\v{c}as and I.~Hlav\'{a}\v{c}ek.
\newblock {\em Mathematical theory of elastic and elasto-plastic bodies: an
  introduction}, volume~3 of {\em Studies in Applied Mechanics}.
\newblock Elsevier Scientific Publishing Co., Amsterdam-New York, 1980.

\bibitem{pruss2016moving}
J.~Pr{\"u}ss and G.~Simonett.
\newblock {\em Moving interfaces and quasilinear parabolic evolution
  equations}, volume 105.
\newblock Springer, 2016.

\bibitem{Qian-Wang-Sheng}
T.~Qian, X.-P. Wang, and P.~Sheng.
\newblock A variational approach to moving contact line hydrodynamics.
\newblock {\em J. Fluid Mech.}, 564:333--360, 2006.

\bibitem{Rohde}
C.~Rohde and L.~von Wolff.
\newblock Homogenization of nonlocal {N}avier-{S}tokes-{K}orteweg equations for
  compressible liquid-vapor flow in porous media.
\newblock {\em SIAM J. Math. Anal.}, 52(6):6155--6179, 2020.

\bibitem{Schreyer}
L.~Schreyer and Z.~Hilliard.
\newblock {Derivation of generalized Cahn--Hilliard equation for two-phase flow
  in porous media using hybrid mixture theory}.
\newblock {\em Advances in Water Resources}, 149:103839, 2021.

\bibitem{shen2013mass}
J.~Shen, X.~Yang, and Q.~Wang.
\newblock Mass and volume conservation in phase field models for binary fluids.
\newblock {\em Communications in Computational Physics}, 13(4):1045--1065,
  2013.

\bibitem{shokrpour2018diffuse}
M.~Shokrpour~Roudbari, G.~{\c{S}}im{\c{s}}ek, E.~H. van Brummelen, and
  K.~van~der Zee.
\newblock Diffuse-interface two-phase flow models with different densities: A
  new quasi-incompressible form and a linear energy-stable method.
\newblock {\em Mathematical Models and Methods in Applied Sciences},
  28(04):733--770, 2018.

\bibitem{temam1979navier-stokes}
R.~Temam.
\newblock {\em Navier-{S}tokes equations}, volume~2 of {\em Studies in
  Mathematics and its Applications}.
\newblock North-Holland Publishing Co., Amsterdam-New York, revised edition,
  1979.
\newblock Theory and numerical analysis, With an appendix by F. Thomasset.

\bibitem{Triebel}
H.~Triebel.
\newblock {\em Interpolation theory, function spaces, differential operators},
  volume~18 of {\em North-Holland Mathematical Library}.
\newblock North-Holland Publishing Co., Amsterdam-New York, 1978.

\bibitem{Triebel2}
H.~Triebel.
\newblock {\em Theory of function spaces. {II}}, volume~84 of {\em Monographs
  in Mathematics}.
\newblock Birkh\"{a}user Verlag, Basel, 1992.

\bibitem{Wu2022}
H.~Wu.
\newblock A review on the {C}ahn--{H}illiard equation: classical results and
  recent advances in dynamic boundary conditions.
\newblock {\em Electron. Res. Arch.}, 30(8):2788--2832, 2022.

\bibitem{Zhang2002}
D.~Zhang.
\newblock 6 -- {T}wo-phase flow.
\newblock In D.~Zhang, editor, {\em Stochastic Methods for Flow in Porous
  Media}, pages 262--296. Academic Press, San Diego, 2002.

\end{thebibliography}

\end{document}